\newenvironment{proof}{\noindent {\bf Proof } }{$\square$ }
\definecolor{violet}{rgb}{0.580,0.,0.827}
\definecolor{labelkey}{rgb}{0.6,0,1}
\newcounter{cst}
\def\ctel#1{C_{\refstepcounter{cst}\@bsphack
\protected@write\@auxout{}%
           {\string\newlabel{#1}{{\thecst}{\thepage}}}\thecst}}
\newcommand{\cter}[1]{C_{\ref{#1}}}
\newcounter{cexp}
\def\terml#1{T_{\refstepcounter{cexp}\@bsphack
\protected@write\@auxout{}%
           {\string\newlabel{#1}{{\thecexp}{\thepage}}}\thecexp}}
\newcommand{\mathbi}[1]{{\boldsymbol #1}}
\newcommand{\eop}{{\unskip\nobreak\hfil\penalty50
           \hskip2em\hbox{}\nobreak\hfil\mbox{\rule{1ex}{1ex} \qquad}
   \parfillskip=0pt
   \finalhyphendemerits=0\par\medskip}}
\renewenvironment{proof}[1][]{\noindent {\bf Proof#1. } }{\eop}
\definecolor{shadecolor}{gray}{0.92}
\newcommand{\ba}{\begin{array}{llll}   }
\newcommand{\bac}{\begin{array}{c}}
\newcommand{\bari}{\begin{array}{r}}
\newcommand{\ea}{\end{array}}
\newcommand{\ban}{\begin{array}{llll}}
\newcommand{\ean}{\end{array}}
\newcommand{\be}{\begin{equation}}
\newcommand{\ee}{\end{equation}}
\newcommand{\beqsys }{\beqtab \left \{ \begin{array}{l}}
\newcommand{\eeqsys }{\end{array} \right . \eeqtab }
\newcommand{\benum}{\begin{enumerate}}
\newcommand{\eenum}{\end{enumerate}}
\newcommand{\beqtab}{\begin{eqnarray}} 
\newcommand{\eeqtab}{\end{eqnarray}}
\newcommand{\dsp}{\displaystyle}
\newcommand{\bchi}{\mathbi{\chi}}
\newcommand{\bfa}{\mathbi{a}}
\newcommand{\bfA}{\mathbi{A}}
\newcommand{\bxi}{\mathbi{\xi}}
\newcommand{\bG}{\mathbi{G}}
\newcommand{\bu}{\ubarre}
\newcommand{\bvarphi}{\mathbi{\varphi}}
\renewcommand{\d}{{\rm d}}
\newcommand{\disc}{{\mathcal D}}
\newcommand{\dr}{\partial}
\newcommand{\dt}{{\delta\!t}}
\renewcommand{\div}{{\rm div}}
\newcommand{\grad}{\nabla}
\newcommand{\half}{{\frac 1 2}}
\newcommand{\interp}{{\mathcal I}}
\newcommand{\N}{\mathbb N}
\newcommand{\norm}[2]{\| #1 \|_{#2}}
\renewcommand{\O}{\Omega}
\newcommand{\Pidisc}{\Pi_\disc}
\renewcommand{\phi}{\varphi}
\newcommand{\R}{\mathbb R}
\newcommand{\ubarre}{{\overline u}}
\newcommand{\vbarre}{{\overline v}}
\newcommand{\x}{\mathbi{x}}
\newcommand{\y}{\mathbi{y}}
\def\argmin{\mathop{\,\rm argmin\;}}
\def\weak{\mbox{\rm-w}}
\newcommand{\z}{\mathbi{z}}
\newcommand{\Z}{\mathbb Z}
\newtheorem{theorem}{Theorem}[section] 
\newtheorem{remark}[theorem]{Remark}
\newtheorem{lemma}[theorem]{Lemma}
\newtheorem{definition}[theorem]{Definition}
\newtheorem{proposition}[theorem]{Proposition}
\newtheorem{corollary}[theorem]{Corollary}
\title{Uniform-in-time convergence of numerical methods for
non-linear degenerate parabolic equations}
\author{J\'er\^ome Droniou\footnote{School of Mathematical Sciences,
Monash University, Victoria 3800, Australia. \texttt{jerome.droniou@monash.edu}.}~
and Robert Eymard\footnote{Universit\'e Paris-Est, Laboratoire d'Analyse et de Math\'ematiques Appliqu\'ees,
UMR 8050, 5 boulevard Descartes, Champs-sur-Marne
77454 Marne-la-Vall\'ee Cedex 2, France. \texttt{Robert.Eymard@univ-mlv.fr}.}}
\begin{document}

\maketitle

\begin{abstract}
Gradient schemes is a framework that enables the unified convergence analysis
of many numerical methods for elliptic and parabolic partial differential
equations: conforming and non-conforming Finite Element, Mixed Finite Element and Finite Volume methods.
We show here that this framework can be applied to a family of degenerate non-linear parabolic
equations (which contain in particular the Richards', Stefan's and Leray--Lions' models),
and we prove a uniform-in-time strong-in-space convergence result for the
gradient scheme approximations of these equations.
In order to establish this convergence, we develop several discrete compactness
tools for numerical approximations of parabolic models, including
a discontinuous Ascoli-Arzel\`a theorem and a uniform-in-time weak-in-space discrete
Aubin-Simon theorem. The model's degeneracies, which
occur both in the time and space derivatives, also requires us to develop a discrete compensated
compactness result.
\end{abstract}
\medskip

\textbf{AMS Subject Classification}: 65M12, 35K65, 46N40.
\medskip

\textbf{Keywords}: gradient schemes, convergence analysis, degenerate parabolic equations,
uniform convergence, discontinuous Ascoli-Arzel\`a theorem, discrete Aubin-Simon theorem,
compensated compactness.

\section{Introduction}

\subsection{Motivation}

The following generic nonlinear parabolic model
\be\ba
\partial_t \beta(\ubarre)-\div \left( \bfa(\x,\nu(\ubarre),\grad\zeta(\ubarre)) \right)= f &\hbox{ in } \Omega\times(0,T),\\
\beta(\ubarre)(\x,0) = \beta(u_{\rm ini})(\x)&\hbox{ in } \Omega,\\
\zeta(\ubarre)=0 &\hbox{ on } \dr \Omega\times(0,T),
\ea\label{pbintrot}\ee
where $\beta$ and $\zeta$ are non-decreasing, $\nu$ is such that $\nu' = \beta'\zeta'$ and  $\bfa$ is a Leray--Lions operator, arises in various frameworks (see next section for precise hypotheses on the data). This model includes
\begin{enumerate}
\item The Richards model, setting $\zeta(s)=s$, $\nu = \beta$ and $\bfa(\x,\nu(\ubarre),\grad\zeta(\ubarre)) = K(\x,\beta(\ubarre)) \grad \ubarre$, which describes the
flow of water in a heterogeneous anisotropic underground medium, 
\item The Stefan model \cite{pelletier}, setting $\beta(s)=s$, $\nu = \zeta$, $\bfa(\x,\nu(\ubarre),\grad\zeta(\ubarre)) = K(\x,\zeta(\ubarre)) \grad \zeta(\ubarre)$,
which arises in the study of a simplified heat diffusion process in a melting medium,
\item The $p-$Laplace problem, setting $\beta(s) = \zeta(s) = \nu(s) = s$ and $\bfa(\x,\nu(\ubarre),\grad\zeta(\ubarre)) =  |\grad \ubarre|^{p-2} \grad \ubarre$,
which is involved in the motion of glaciers \cite{glo03} or flows of incompressible turbulent
fluids through porous media \cite{dia94}.

General Leray--Lions operators $\bfa(\x,s,\bxi)$ have growth, monotony and coercivity
properties (see \eqref{hypanl2}--\eqref{hypanl4} below)
which ensure that $-\div(\bfa(\x,w,\nabla\cdot))$ maps $W^{1,p}_0(\O)$ into $W^{-1,p'}(\O)$,
and thanks to which this differential operator is viewed as a generalisation of the $p$-Laplace operator.
\end{enumerate}

The numerical approximation of these models has
been extensively studied in the literature -- see the fundamental work on the Stefan's problem \cite{MR954786} and \cite{MR2230161,EGHNS} for some of its
numerical approximations, \cite{MR1916296,MR1750075} for the Richards' problem, and \cite{dro-06-ll,dro-12-gra} and references therein for some studies of convergence of numerical methods for the Leray--Lions' problem.
In \cite{rul1996opt}, fully discrete implicit schemes are considered in 2D domains
for the problem $\partial_t e-\Delta u=f$, $e\in \beta(u)$ with $\beta$ a maximal monotone operator; error estimates are obtained and the results are relevant, e.g., for the Stefan problem and the porous medium equation.

More generally, studies have been carried out on numerical time-stepping approximations of non-linear abstract parabolic equations.
In \cite{lub1995lin} the authors study the stability and convergence properties of linearised implicit methods for the time discretization
of nonlinear parabolic equations in the general framework of  Hilbert spaces. The time discretisation of nonlinear evolution equations in an abstract Banach space
setting of analytic semigroups is studied in \cite{gon2002bac}; this setting covers fully nonlinear parabolic initial-boundary value problems with smooth coefficients.
\cite{akr2011gal} deals with a general formulation for semi-discretisations of linear parabolic evolution problems in Hilbert spaces; this
time-stepping formulation encompasses continuous and discontinuous Galerkin methods, as well as Runge Kutta methods. The study in \cite{akr2011gal} has been
extended in \cite{akr2009opt} to semi-linear equations, i.e. with the addition of a right-hand side which is locally Lipschitz-continuous with respect to the unknown.
In the same directions, we also quote \cite{lub1993runge,lub1995run,lub1996run,ost2002con,gwi2014ful} for Runge-Kutta time discretizations
of linear and quasilinear parabolic equations (reaction-diffusion, Navier-Stokes equations, etc.). Multisteps methods have also been
considered, see e.g. \cite{ost2004sta}. 

However, most of these studies are only applicable under regularity assumptions on the
solution or data, and to semi-linear equations or semi-discretised schemes. None deals with
as many non-linearities and degeneracies as in \eqref{pbintrot}. Moreover, 
the results in these works mostly yield space-time averaged convergences,
e.g. in $L^2(\O\times (0,T))$. Yet, the quantity of
interest is often not $\bu$ on $\O\times (0,T)$ but $\bu$ at a given time, for example $t=T$.
Current numerical analyses therefore do not ensure that this quantity of interest is properly approximated by
numerical methods.

The usual way to obtain pointwise-in-time approximation results for numerical
schemes is to prove estimates in $L^\infty(0,T;L^2(\O))$ on
$u-\bu$, where $u$ is the approximated solution. Establishing such error
estimates is however only feasible when uniqueness of the solution $\bu$
to \eqref{pbintrot} can be proved, which is the case for Richards' and
Stefan's problems (with $K$ only depending on $\x$), but not for more complex non-linear parabolic problems
as \eqref{pbintrot} or even $p$-Laplace problems.
It moreover requires some regularity assumptions on $\bu$,
which clearly fail to hold for \eqref{pbintrot} (and simpler $p$-Laplace problems);
indeed, because of the possible
plateaux of $\beta$ and $\zeta$, the solution's gradient can develop jumps.

The purpose of this article is to prove that, using Discrete Functional Analysis
techniques (i.e. the translation to numerical analysis of nonlinear analysis
techniques), an $L^\infty(0,T;L^2(\O))$ convergence
result can be established for numerical approximations of \eqref{pbintrot}, without having to assume
non-physical regularity assumptions on the data. Note that,
although Richards' and Stefan's models are formally equivalent when
$\beta$ and $\zeta$ are strictly increasing (consider $\beta=\zeta^{-1}$
to pass from one model to the other), they change nature when these
functions are allowed to have plateaux. Stefan's model can degenerate to an ODE (if $\zeta$ is constant
on the range of the solution) and Richards' model can become a non-transient elliptic
equation (if $\beta$ is constant on this range). The innovative technique we develop in this paper
is nonetheless generic enough to work directly on \eqref{pbintrot} and
with a vast number of numerical methods.

That being said, a particular numerical framework must be selected
to write precise equations and estimates. The framework we choose
is that of gradient schemes, which has the double benefit of covering
a vast number of numerical methods, and of having already been studied
for many models -- elliptic, parabolic, linear or non-linear, possibly degenerate,
etc. -- with various boundary conditions.
The schemes or family of schemes included in the gradient schemes framework, and to which our results
therefore directly apply, currently are:
\begin{itemize}
\item Galerkin methods, including conforming Finite Element
schemes,
\item finite element with mass lumping \cite{cia-91-fin},
\item the Crouzeix-Raviart non-conforming finite element, with or without mass lumping \cite{crouzeix-raviart-73,ern2004theory},
\item the Raviart-Thomas mixed finite elements \cite{brezzi-fortin},
\item the vertex approximate gradient scheme \cite{eym-12-sma},
\item the hybrid mimetic mixed family \cite{dro-10-uni}, which includes mimetic
finite differences \cite{bre-05-fam}, mixed finite volume \cite{dro-06-mix} and the SUSHI scheme \cite{sushi},
\item the discrete duality finite volume scheme in dimension 2 \cite{her-03-app,and-07-dis}, and the
CeVeFE-discrete duality finite volume scheme in dimension 3 \cite{cou-10-dis},
\item the multi-point flux approximation O-method \cite{aav-96-dis,edw-98-mpfa}.
\end{itemize}
We refer the reader to \cite{koala,dro-12-gra,eym-12-stef,eym-11-gra,zamm2013} for more details.
Let us finally emphasize that the unified convergence study of numerical schemes for
Problem \eqref{pbintrot},
which combines a general Leray--Lions operator and nonlinear functions $\beta$ or $\zeta$, seems to be new
even without the uniform-in-time convergence result.

\medskip

The paper is organised as follows. In Section \ref{sec:contprob}, we present
the assumptions and the notion of weak solution for \eqref{pbintrot} and, in Section \ref{sec:genpcp},
we give an overview of the ideas involved in the proof of uniform-in-time convergence.
This overview is given not in a numerical analysis context but
in the context of a pure stability analysis of \eqref{pbintrot} with very little regularity on the data, for which
the uniform-in-time convergence result also seems to be new.
Section \ref{sec:GS} presents the gradient schemes for our generic model \eqref{pbintrot}. We give in Section
\ref{sec:prelim} some preliminaries to the convergence study, in particular a crucial
uniform-in-time weak-in-space discrete Aubin-Simon compactness result. Section \ref{sec:proof}
contains the complete convergence proof of gradient schemes for \eqref{pbintrot}, including
the uniform-in-time convergence result. This proof is initially conducted under a simplifying assumption
on $\beta$ and $\zeta$. We demonstrate in Section \ref{sec:hypstruct} that, in the case $p\ge 2$,
this assumption can be removed thanks to a discrete compensated compactness result. 
We also remark in this section that our results apply to the model considered in \cite{rul1996opt}.
An appendix, Section \ref{sec:comptime}, concludes the article
with technical results, in particular a generalisation of the
Ascoli-Arzel\`a compactness result to discontinuous functions
and a characterisation of the uniform convergence of a sequence of functions;
these results are critical to establishing our uniform-in-time convergence result.
We believe that the Discrete Functional Analysis results we establish in
order to study the approximations of \eqref{pbintrot} -- in particular the
discrete compensated compactness theorem (Theorem \ref{thm-comp-comp}) -- could
be critical to the numerical analysis of other degenerate or coupled models of physical importance.

Note that the main results and their proofs have been sketched and illustrated by some numerical examples
in \cite{DE-fvca7}, for $\bfa(\x,\nu(\ubarre),\grad\zeta(\ubarre)) = \grad\zeta(\ubarre)$
and $\beta={\rm Id}$ or $\zeta={\rm Id}$.

\subsection{Hypotheses and weak sense for the continuous problem}\label{sec:contprob}

We consider the evolution problem \eqref{pbintrot} under the following hypotheses.
\begin{subequations}
\begin{align}
&\begin{array}{l}\O \mbox{ is an open bounded subset of $\R^d$
($d\in\N^\star$) and }T>0,\end{array} \label{hypomega} \\
&\begin{array}{l}
\zeta\in C^0(\R)\mbox{ is non--decreasing, Lipschitz continuous
 with Lipschitz constant } L_\zeta>0, \\
\zeta(0) = 0 \mbox{ and, for some $M_0,M_1>0$, }
|\zeta(s)| \ge M_0 |s| - M_1\mbox{  for all } s\in\R.
\end{array}
\label{hypzeta}\\
&\begin{array}{l}
\beta\in C^0(\R)\hbox{ is non--decreasing, Lipschitz continuous
with Lipschitz constant $L_\beta> 0$},\\
\hbox{and }\beta(0) = 0.\end{array}
\label{hypbeta}\\
&\label{def:chi}
 \begin{array}{l}
\dsp \forall s\in\R\,,\quad \nu(s) = \int_0^s \zeta'(q)\beta'(q) \d q.
\end{array}\\
& \begin{array}{l}
\bfa~:~\O\times \R\times\R^d\to\R^d\hbox{ is a Carath\'eodory function}
\end{array} \label{hypanl1}
\intertext{(i.e. a function such that, for a.e. $\x \in \Omega$, $(s,\bxi) \mapsto \bfa(\x, s,\bxi)$ is continuous 
and, for any $(s,\bxi) \in \R\times\R^d$, $\x \mapsto \bfa(\x,s,\bxi )$ is measurable)
and, for some $p\in (1,+\infty)$,}
&\begin{array}{l}
\exists \underline a\in(0,+\infty)  \ : \ \bfa(\x,s,\bxi)\cdot\bxi \ge \underline a|\bxi|^p,\hbox{ for a.e. }\x\in\O,\ \forall s\in \R,\ \forall\bxi\in\R^d,
\end{array}
\label{hypanl2}
\\
&\begin{array}{l}
(\bfa(\x,s,\bxi) - \bfa(\x,s,\bchi))\cdot(\bxi-\bchi)\ge 0,\hbox{ for a.e. }\x\in\O,\ \forall s\in \R,\ \forall\bxi,\bchi\in\R^d,
\end{array}
\label{hypanl3}
\\
&\begin{array}{l} \exists \overline{a}\in L^{p'}(\O)\,,\,\exists\mu\in (0,+\infty)\ :\\
\quad |\bfa(\x,s,\bxi)|\le \overline{a}(\x) + \mu|\bxi|^{p-1}, \hbox{ for a.e. }\x\in\O,\ \forall s\in \R,\ \forall\bxi\in\R^d.
\end{array}
\label{hypanl4}
\intertext{We also assume, setting $p'=\frac{p}{p-1}$ the dual
exponent of the $p$ previously introduced,}
&\begin{array}{l}
u_{\rm ini}\in L^2(\O),\quad
f \in L^{p'}(\O\times(0,T)). 
\end{array}\label{hypfgnlt}
\end{align}
\label{hypgnl}
\end{subequations}
We denote by $R_\beta$ the range of $\beta$ and define the pseudo-inverse
function $\beta_r:R_\beta\to\R$ of $\beta$ by
\be
\ba
\forall s\in R_\beta\,,\;
\dsp\beta_r(s)&=&\dsp\left\{\begin{array}{ll} \inf\{t\in\R\,|\,\beta(t)=s\}&\mbox{ if $s> 0$},\\
0&\mbox{ if $s=0$},\\
\sup\{t\in\R\,|\,\beta(t)=s\}&\mbox{ if $s< 0$},\end{array}\right.\\
&=&\dsp\mbox{closest $t$ to $0$ such that $\beta(t)=s$.}
\ea
\label{defbetar}
\ee
Since $\beta(t)$ has the same sign as $t$, we have $\beta_r\ge 0$ on $R_\beta\cap \R^+$ and $\beta_r\le 0$ on
$R_\beta\cap \R^-$.
We then define $B:R_\beta\to [0,\infty]$ by
\[
B(z)=\int_0^z \zeta(\beta_r(s))\,ds.
\]
Since $\beta_r$ is non-decreasing, this expression is always well-defined in $[0,\infty)$.
The signs of $\beta_r$ and $\zeta$ ensure that $B$ is non-decreasing on $R_\beta\cap \R^+$
and non-increasing on $R_\beta\cap \R^-$, and therefore has
limits (possibly $+\infty$) at the endpoints of $R_\beta$. 
We can thus extend $B$ as a function defined on $\overline{R_\beta}$
with values in $[0,+\infty]$.

The precise notion of solution to \eqref{pbintrot} that we consider is the following:
\be\left\{\ba
\bu\in L^p(0,T;L^p(\O))\,,\; \zeta(\ubarre) \in L^p(0,T;W^{1,p}_0(\O))\,,\\
B(\beta(\ubarre))\in  L^\infty(0,T;L^1(\O)),\ \beta(\ubarre)\in C([0,T];L^2(\O)\weak),\partial_t\beta(\ubarre)\in L^{p'}(0,T;W^{-1,p'}(\O)),\\
\beta(\ubarre)(\cdot,0) = \beta(u_{\rm ini})\mbox{ in $L^2(\O)$},\\
\dsp\int_0^T \langle \partial_t\beta(\ubarre)(\cdot,t), \vbarre(\cdot,t)\rangle_{W^{-1,p'},W^{1,p}_0}\d t \\
\dsp \quad +  
\int_0^T \int_\O \bfa(\x,\nu(\ubarre(\x,t)),\grad\zeta(\ubarre)(\x,t))\cdot\grad \vbarre(\x,t) 
\d\x\d t = \int_0^T \int_\O f(\x,t) \vbarre(\x,t) \d\x\d t\,,\\
\qquad \forall \vbarre\in L^p(0;T;W^{1,p}_0(\O)).
\ea\right.\label{ellgenfllt}
\ee
where $C([0,T];L^2(\O)\weak)$ denotes the space of continuous functions $[0,T]\mapsto L^2(\O)$ for
the weak-$*$ topology of $L^2(\O)$. Here and in the following, we
remove the mention of $\O$ in the duality bracket $\langle \cdot, \cdot\rangle_{W^{-1,p'},W^{1,p}_0}
=\langle \cdot, \cdot\rangle_{W^{-1,p'}(\O),W^{1,p}_0(\O)}$.

\begin{remark}\label{rem-defdt_u}
The derivative $\partial_t\beta(\ubarre)$ is to be understood in the usual sense of
distributions on $\O\times (0,T)$. Since the set
$\mathcal T=\{\sum_{i=1}^q\varphi_i(t)\gamma_i(\x)\,:\,
q\in\N,\varphi_i\in C^\infty_c(0,T),\gamma_i\in C^\infty_c(\O)\}$ of tensorial
functions in $C^\infty_c(\O\times(0,T))$ is dense in $L^p(0,T;W^{1,p}_0(\O))$,
one can ensure that this distribution derivative $\partial_t \beta(\ubarre)$ belongs to 
$L^{p'}(0,T;W^{-1,p'}(\O))=(L^p(0,T;W^{1,p}_0(\O)))'$ by checking that the linear form
\[
\varphi\in \mathcal T\mapsto \langle \partial_t \beta(\ubarre),\varphi\rangle_{\mathcal D',\mathcal D}
=-\int_0^T\int_\O \beta(\ubarre)(\x,t)\partial_t\varphi(\x,t)\d\x \d t
\]
is continuous for the norm of $L^p(0,T;W^{1,p}_0(\O))$.
\end{remark}

Note that the continuity property of $\beta(\bu)$ in \eqref{ellgenfllt} is natural. Indeed,
since $\beta(\bu)\in L^\infty(0,T;L^2(\O))$ (this comes from $B(\beta(\bu))\in L^\infty(0,T;L^1(\O))$ and
\eqref{growthB}), the PDE in the sense of distributions shows
that for any $\varphi\in C^\infty_c(\O)$
the mapping $T_\varphi:t\mapsto \langle \beta(\bu)(t),\varphi\rangle_{L^2}$
belongs to $W^{1,1}(0,T)\subset C([0,T])$.
By density of $C^\infty_c(\O)$ in $L^2(\O)$ and the integrability properties of $\beta(\bu)$,
we deduce that $T_\varphi\in C([0,T])$ for any $\varphi\in L^2(\O)$, which precisely 
establishes the continuity of $\beta(\bu):[0,T]\to L^2(\O)\weak$.

This notion of $\beta(\bu)$ as a function continuous in time is nevertheless
a subtle one. It is to be understood in the sense that the function $(\x,t)\mapsto \beta(\bu(\x,t))$
has an a.e. representative which is continuous $[0,T]\mapsto L^2(\O)\weak$. In other words,
there is a function $Z\in C([0,T];L^2(\O)\weak)$ such that $Z(t)(\x)=\beta(\bu(\x,t))$ for a.e.
$(\x,t)\in\O\times (0,T)$. We must however make sure, when dealing with pointwise values in
time, to separate $Z$ from $\beta(\bu(\cdot,\cdot))$ as $\beta(\bu(\cdot,t_1))$ may not make
sense for a particular $t_1\in [0,T]$.
That being said, in order to adopt a simple notation, we will denote by $\beta(\bu)(\cdot,\cdot)$
the function $Z$, and by $\beta(\bu(\cdot,\cdot))$ the a.e.-defined composition of $\beta$ and $\bu$.
Hence, it will make sense to talk about $\beta(\bu)(\cdot,t)$ for a particular $t_1\in [0,T]$,
and we will only write $\beta(\bu)(\x,t)=\beta(\bu(\x,t))$ for a.e. $(\x,t)\in\O\times (0,T)$.
Note that from this a.e. equality we can ensure that $\beta(\bu)(\cdot,\cdot)$ takes
its values in the closure $\overline{R_\beta}$ of the range of $\beta$.

\subsection{General ideas for the uniform-in-time convergence result}\label{sec:genpcp}

As explained in the introduction, the main innovative result of this article
is the uniform-in-time convergence result (Theorem \ref{th:uniftime} below).
Although it's stated and proved in the context of numerical approximations
of \eqref{pbintrot}, we emphasize that the ideas underlying its proof are also applicable to theoretical
analysis of PDEs. Let us informally present these ideas on the following
continuous approximation of \eqref{pbintrot}:
\be\ba
\partial_t \beta(\ubarre_\varepsilon)-\div \left( \bfa_\varepsilon(\x,\nu(\ubarre_\varepsilon),\grad\zeta(\ubarre_\varepsilon)) \right)= f &\hbox{ in } \Omega\times(0,T),\\
\beta(\ubarre_\varepsilon)(\x,0) = \beta(u_{\rm ini})(\x)&\hbox{ in } \Omega,\\
\zeta(\ubarre_\varepsilon)=0 &\hbox{ on } \dr \Omega\times(0,T)
\ea\label{pbintroteps}\ee
where $\bfa_\varepsilon$ satisfies Assumptions \eqref{hypanl1}--\eqref{hypanl4} with constants
not depending on $\varepsilon$ and, as $\varepsilon\to 0$, $\bfa_\varepsilon\to \bfa$ locally
uniformly with respect to $(s,\bxi)$.

We want to show here how to deduce from averaged convergences
a strong uniform-in-time convergence result. We therefore assume the following convergences
(up to a subsequence as $\varepsilon\to 0$),
which are compatible with basic compactness results that can be obtained on
$(\ubarre_\varepsilon)_{\varepsilon}$
and also correspond to the initial convergences \eqref{conv:base}
on numerical approximations of \eqref{pbintrot}:
\be\label{aver-ueps}
\ba
\beta(\ubarre_\varepsilon)\to \beta(\ubarre)\mbox{ in $C([0,T];L^2(\O)\weak)$}\,,\;
\nu(\ubarre_\varepsilon)\to \nu(\ubarre)\mbox{ strongly in $L^1(\O\times(0,T))$},\\
\zeta(\ubarre_\varepsilon)\to \zeta(\ubarre)
\mbox{ weakly in $L^p(0,T;W^{1,p}_0(\O))$}\,,\\
\bfa_\varepsilon(\cdot,\nu(\ubarre_\varepsilon),\nabla\zeta(\ubarre_\varepsilon))
\to \bfa(\cdot,\nu(\ubarre),\nabla\zeta(\ubarre))
\mbox{ weakly in $L^{p'}(\O\times(0,T))^d$}.
\ea
\ee
We will prove from these convergences that, along the same subsequence, $\nu(\ubarre_\varepsilon)\to
\nu(\ubarre)$ strongly in $C([0,T];L^2(\O))$, which is our uniform-in-time convergence
result.

We start by noticing that the weak-in-space uniform-in-time convergence
of $\beta(\ubarre_\varepsilon)$ gives, for any $T_0\in [0,T]$ and any family $(T_\varepsilon)_{\varepsilon>0}$
converging to $T_0$ as $\varepsilon\to 0$, $\beta(\ubarre_\varepsilon)(T_\varepsilon,\cdot)
\to \beta(\ubarre)(T_0,\cdot)$ weakly in $L^2(\O)$. Classical strong-weak semi-continuity properties of
convex functions (see Lemma \ref{lem:Zconv}) and the convexity of $B$
(see Lemma \ref{lembetaB}) then ensure that
\be\label{conv-theorique1}
\int_\O B(\beta(\ubarre)(\x,T_0))\d\x\le \liminf_{\varepsilon\to 0}
\int_\O B(\beta(\ubarre_\varepsilon)(\x,T_\varepsilon))\d\x.
\ee

The second step is to notice that, by \eqref{hypanl3} for $\bfa_\varepsilon$,
\[
\int_0^{T_\varepsilon} \int_\O \left[\bfa_\varepsilon(\cdot,\nu(\ubarre_\varepsilon),
\nabla\zeta(\ubarre_\varepsilon)) - \bfa_\varepsilon(\cdot,\nu(\ubarre_\varepsilon),
\nabla\zeta(\ubarre))\right]
\cdot\left[\grad\zeta(\ubarre_\varepsilon)-\nabla\zeta(\ubarre)\right]\d\x\d t \ge 0.
\]
Developing this expression and using the convergences \eqref{aver-ueps}, we find that
\be\label{conv-theorique2}
\liminf_{\varepsilon\to 0}\int_0^{T_\varepsilon} \int_\O \bfa_\varepsilon(\cdot,\nu(\ubarre_\varepsilon),
\nabla\zeta(\ubarre_\varepsilon))\cdot\nabla\zeta(\ubarre_\varepsilon)(\x,t)\d\x\d t
\ge \int_0^{T_0}\int_\O \bfa(\cdot,\nu(\ubarre),\nabla\zeta(\ubarre)) \cdot \nabla\zeta(\ubarre)
\d\x\d t.
\ee

We then establish the following formula:
\begin{multline}
\int_\O B(\beta(\bu_\varepsilon(\x,T_\varepsilon))) \d \x 
+ \int_{0}^{T_\varepsilon}   \int_\O \bfa_\varepsilon(\x,\nu(\bu_\varepsilon(\x,t)),\nabla\zeta(\bu_\varepsilon)(\x,t))
\cdot\nabla \zeta(\bu_\varepsilon)(\x,t)\d\x \d t \\
=  \int_\O B(\beta(u_{\rm ini}(\x))) \d \x 
+  \int_{0}^{T_\varepsilon}   \int_\O f(\x,t) \zeta(\bu_\varepsilon)(\x,t) \d \x \d t.\label{form:ippsoleps}
\end{multline}
This energy equation is formally obtained by multiplying \eqref{pbintroteps}
by $\zeta(\bu_\varepsilon)$ and integrating by parts, using
$(B\circ\beta)'=\zeta\beta'$ (see Lemma \ref{lembetaB}); the rigorous justification
of \eqref{form:ippsoleps} is however quite technical -- see Lemma \ref{ricipp} and
Corollary \ref{ric:bou}.
Thanks to \eqref{conv-theorique2}, we can pass to the $\limsup$ in \eqref{form:ippsoleps}
and we find, using the same energy equality with $(\ubarre,\bfa,T_0)$ instead of
$(\ubarre_\varepsilon,\bfa_\varepsilon,T_\varepsilon)$,
\be\label{conv-theorique3}
\limsup_{\varepsilon\to 0} \int_\O B(\beta(\ubarre_\varepsilon(\x,T_\varepsilon)))\d\x
\le \int_\O B(\beta(\ubarre(\x,T_0)))\d\x.
\ee
Combined with \eqref{conv-theorique1}, this shows that $\int_\O
B(\beta(\ubarre_\varepsilon(\x,T_\varepsilon)))\d\x\to \int_\O B(\beta(\ubarre(\x,T_0)))\d\x$.
A uniform convexity property of $B$ (see \eqref{Bunifconv}) then allows us to deduce
that $\nu(\ubarre_\varepsilon(\cdot,T_\varepsilon))\to
\nu(\ubarre(\cdot,T_0))$ strongly in $L^2(\O)$ and thus
that $\nu(\ubarre_\varepsilon)\to \nu(\ubarre)$
strongly in $C([0,T];L^2(\O))$ (see Lemma \ref{equiv-unifconv}).

\begin{remark} A close examination of this proof indicates that equality in the energy relation
\eqref{form:ippsoleps} is not required for $\ubarre_\varepsilon$. An inequality
$\le$ would be sufficient. This is particularly important in the context of
numerical methods which may introduce additional numerical diffusion (for example
due to an implicit-in-time discretisation) and
therefore only provide an upper bound in this energy estimate, see \eqref{app:ipp}.
It is however essential that the limit solution $\ubarre$ satisfies the equivalent of \eqref{form:ippsoleps}
with an equal sign (or $\ge$).
\end{remark}

\section{Gradient discretisations and gradient schemes}\label{sec:GS}

\subsection{Definitions}

We give here a minimal presentation of gradient discretisations and gradient schemes,
limiting ourselves to what is necessary to study the discretisation of \eqref{pbintrot}.
We refer the reader to \cite{koala,eym-12-sma,dro-12-gra} for more details.

A gradient scheme can be viewed as a general formulation of several discretisations of \eqref{pbintrot},
that are based on a nonconforming approximation of the weak formulation of the problem.
This approximation is constructed by using discrete space and mappings, the
set of which are called a gradient discretisation.

\begin{definition}\label{def-stcons}{\bf (Space-Time gradient discretisation for homogeneous
Dirichlet boundary conditions)}

We say  that $\disc = (X_{\disc,0}, \Pi_\disc,\nabla_\disc, \interp_\disc,(t^{(n)})_{n=0,\ldots,N})$ is a space-time gradient discretisation for homogeneous Dirichlet boundary conditions if

\begin{enumerate}
\item the set of discrete unknowns $X_{\disc,0}$ is a finite dimensional real vector space,
\item the linear mapping $\Pi_\disc~:~X_{\disc,0}\to L^{\infty}(\O)$ is a piecewise constant reconstruction operator in the following sense: there exists a set $I$ of degrees of freedom
and a family $(\Omega_i)_{i\in I}$ of disjoint subsets of $\Omega$ such that
$X_{\disc,0}=\R^I$, $\O=\bigcup_{i\in I}\O_i$ and,
for all $u=(u_i)_{i\in I}\in X_{\disc,0}$ and all $i\in I$, $\Pi_\disc u=u_i$
on $\O_i$,
\item the linear mapping $\nabla_\disc~:~X_{\disc,0}\to L^p(\O)^d$ gives a reconstructed discrete gradient. 
It must be chosen such that $\Vert \nabla_\disc \cdot \Vert_{L^p(\O)^d}$ is a norm on $X_{\disc,0}$,
\item $\interp_\disc~:~L^2(\O)\to X_{\disc,0}$ is a linear interpolation operator,
\item $t^{(0)}=0<t^{(1)}<t^{(2)}<\ldots<t^{(N)}=T$.
\end{enumerate}
We then set $\dt^{(n+\half)} = t^{(n+1)} -t^{(n)}$ for $n=0,\ldots,N-1$, and $\dt_\disc = \max_{n=0,\ldots,N-1} \dt^{(n+\half)}$. We define the dual semi-norm $|w|_{\star,\disc}$ of $w\in X_{\disc,0}$ by
\be
|w|_{\star,\disc}=\sup\left\{\int_\O \Pi_{\disc} w(\x)\Pi_{\disc}z(\x)\d\x\,:\,
z\in X_{\disc,0}\,,\;||\nabla_\disc z||_{L^p(\O)^d}= 1\right\}.
\label{nstnormemel}
\ee
\end{definition}

\begin{remark}[Boundary conditions]
Other boundary conditions can be seamlessly handled by gradient schemes, see \cite{koala}. 
\end{remark}

\begin{remark}[Nonlinear function of the elements of $X_{\disc,0}$]\label{rem:nlfxd}
Let $\disc$ be a gradient discretisation in the sense of Definition  \ref{def-stcons}. For any $\chi:\R\mapsto \R$
and any $u=(u_i)_{i\in I}\in X_{\disc,0}$, we define $\chi_I(u)\in X_{\disc,0}$ by
$\chi_I(u)=(\chi(u_i))_{i\in I}$. As indicated by
the subscript $I$, this definition depends on the choice of the degrees of freedom
in $X_{\disc,0}$. That said, these degrees of freedom are usually canonical and
the index $I$ can be dropped. An important consequence of the fact that
$\Pi_{\disc}$ is a \emph{piecewise constant} reconstruction is the following:
\be\label{prop:pcr}
\forall \chi:\R\mapsto \R\,,\;
\forall u\in X_{\disc,0}\,,\quad \Pi_\disc \chi(u)=\chi(\Pi_\disc u).
\ee
\end{remark}

It is customary to use the notations $\Pi_\disc$ and $\grad_\disc$ also for
space-time dependent functions. Moreover, we will need a notation for
the jump-in-time of piecewise constant functions in time.
Hence, if $(v^{(n)})_{n=0,\ldots,N}\subset X_{\disc,0}$,
we set
\be\ba
\hbox{for a.e. }\x\in \O,\ \Pi_{\disc} v(\x,0) = \Pi_{\disc} v^{(0)}(\x)\mbox{ and, }
\forall n=0,\ldots,N-1\,,\;\forall t\in   (t^{(n)},t^{(n+1)}],\\
\qquad \Pi_\disc v(\x,t) = \Pi_\disc v^{(n+1)}(\x)\,,\;
\grad_\disc v(\x,t) = \grad_\disc v^{(n+1)}(\x)\\
\qquad \dsp\mbox{and }  \delta_{\disc} v(t) = \delta_{\disc}^{(n+\half)}v:=\frac{v^{(n+1)}-v^{(n)}}{\dt^{(n+\half)}}\in X_{\disc,0}.
\ea\label{def-stfunctions}\ee

If $\disc = (X_{\disc,0}, \Pi_\disc,\nabla_\disc, \interp_\disc,(t^{(n)})_{n=0,\ldots,N})$
is a space-time gradient discretisation in the sense of Definition \ref{def-stcons},
the associated gradient scheme for Problem \eqref{pbintrot} is obtained
by replacing in this problem the continuous
space and mappings with their discrete ones. Using the notations in Remark \ref{rem:nlfxd}, the
implicit-in-time gradient
scheme therefore consists in
considering a sequence $(u^{(n)})_{n=0,\ldots,N}\subset X_{\disc,0}$ such that
\be\left\{\ba
\mbox{$u^{(0)}=\interp_\disc u_{\rm ini}$ and, for all $v=(v^{(n)})_{n=1,\ldots,N}\subset X_{\disc,0}$,} \\
\dsp \int_0^T\int_\O \left[\Pi_\disc \delta_{\disc}\beta(u)(\x,t) \Pi_\disc v(\x,t) + \bfa(\x, \Pi_\disc \nu(u)(\x,t),\grad_\disc \zeta( u)(\x,t))\cdot\grad_\disc v(\x,t)\right]
\d\x\d t \\
\qquad\dsp = \int_0^T\int_\O f(\x,t)  \Pi_\disc v(\x,t) \d\x\d t.
\ea\right.\label{ellgenfdiscllt}\ee

\begin{remark}[Time-stepping] 
Scheme \eqref{ellgenfdiscllt} is implicit-in-time because of the choice,
in the definitions of $\Pi_\disc$ and $\nabla_\disc$ in \eqref{def-stfunctions},
of $v^{(n+1)}$ when $t\in (t^{(n)},t^{(n+1)}]$.
As a consequence, $u^{(n+1)}$ appears in $\bfa(\x,\cdot,\cdot)$
in \eqref{ellgenfdiscllt} for $t\in (t^{(n)},t^{(n+1)}]$.
Instead of a fully implicit method, we could as well consider a Crank-Nicolson scheme
or any scheme between those two ($\theta$-scheme). This would consist in
choosing $\theta\in [\frac{1}{2},1]$ and in replacing these terms $u^{(n+1)}$
with $u^{(n+\theta)}=\theta u^{(n+1)}+(1-\theta)u^{(n)}$. All results established here for
\eqref{ellgenfdiscllt} would hold for such a scheme. We refer the reader to
the treatment done in \cite{dro-12-gra} for the details.
\end{remark}

\subsection{Properties of gradient discretisations}\label{sec:propgs}

In order to establish the convergence of the associated gradient schemes, sequences
of space-time gradient discretisations are required to satisfy four properties: \emph{coercivity},
\emph{consistency}, \emph{limit-conformity} and \emph{compactness}.

\begin{definition}[Coercivity] \label{def-coer}
If $\disc$ is a space-time gradient discretisation in the sense of Definition
\ref{def-stcons}, the norm of $\Pi_\disc$ is denoted by
\[
C_\disc=\max_{v\in X_{\disc,0}\backslash\{0\}}
\frac{||\Pi_\disc v||_{L^p(\O)}}{||\nabla_\disc v||_{L^p(\O)^d}}.
\]
A sequence $(\disc_m)_{m\in\N}$ of space-time gradient discretisations in the sense
of Definition \ref{def-stcons} is said to be coercive
if there exists $C_P\ge 0$ such that, for any $m\in\N$, $C_{\disc_m}\le C_P$.
\end{definition}

\begin{definition}[Consistency] \label{def-cons}
If $\disc$ is a space-time gradient discretisation in the sense of Definition
\ref{def-stcons}, we define
\be
\forall\varphi\in L^2(\O)\cap W^{1,p}_0(\O),\ \widehat{S}_\disc(\varphi)  = \min_{w\in X_{\disc,0}}\left(||\Pi_{\disc} w-\varphi||_{L^{\max(p,2)}(\O)}
+||\nabla_{\disc} w-\nabla\varphi||_{L^p(\O)^d}\right).
\label{def:sdisc}\ee
A sequence $(\disc_m)_{m\in\N}$ of space-time gradient discretisations in the sense
of Definition \ref{def-stcons} is said to be consistent if
\begin{itemize}
\item for all $\varphi \in L^2(\O)
\cap W^{1,p}_0(\O)$, $\widehat{S}_{\disc_m}(\varphi)\to 0$ as $m\to\infty$,
\item for all $\varphi \in L^2(\O)$, $\Pi_{\disc_m}\interp_{\disc_m}\varphi\to \varphi$
in $L^2(\O)$ as $m\to\infty$, and
\item $\dt_{\disc_m}\to 0$ as $m\to\infty$.
\end{itemize}
\end{definition}

\begin{definition}[Limit-conformity] \label{def-graddivcons}
If $\disc$ is a space-time gradient discretisation in the sense of Definition
\ref{def-stcons} and $W^{\div,p'}(\O)=\{\bvarphi\in L^{p'}(\O)^d\,:\,\div\bvarphi\in L^{p'}(\O)\}$, we define
\be
\begin{array}{l}
\dsp\forall \bvarphi\in W^{\div,p'}(\O)\,,\;
\dsp W_{\disc}(\bm{\varphi}) = \max_{u\in X_{\disc,0}\setminus\{0\}}\frac {\left\vert
\dsp\int_\O \left(\grad_{\disc} u(\x)\cdot\bvarphi(\x) + \Pi_{\disc} u(\x) \div\bvarphi(\x)\right)  \d\x \right\vert} {\Vert  \nabla_\disc u \Vert_{L^p(\O)^d}}.
\end{array}
\label{defwdisc}\ee
A sequence $(\disc_m)_{m\in\N}$ of space-time gradient discretisations in the sense
of Definition \ref{def-stcons} is said to be limit-conforming if, for all $\bm{\varphi}\in W^{\div,p'}(\O)$, $ W_{\disc_m}(\bm{\varphi})\to 0$ as  $m\to\infty$.
\end{definition}

\begin{remark}
The convergences $\widehat{S}_{\disc_m}\to 0$ on $L^2(\O)\cap W^{1,p}_0(\O)$
and $W_{\disc_m}\to 0$ on $W^{\div,p'}(\O)$ only need to be checked on dense subsets
of these spaces \cite{koala,eym-12-sma}.
\end{remark}
\begin{definition}[Compactness] \label{def-comp}
If $\disc$ is a space-time gradient discretisation in the sense of Definition
\ref{def-stcons}, we define
\[
\forall \bxi\in\R^d\,,\;T_\disc(\bxi)=\max_{v\in X_{\disc,0}\backslash\{0\}}
\frac{||\Pi_\disc v(\cdot+\bxi)-\Pi_\disc v||_{L^p(\R^d)}}{||\nabla_\disc v||_{L^p(\O)^d}},
\]
where $\Pi_\disc v$ has been extended by $0$ outside $\O$.

A sequence $(\disc_m)_{m\in\N}$ of space-time gradient discretisations is
said to be compact if
\[
\lim_{\bxi\to 0} \sup_{m\in\N} T_{\disc_m}(\bxi)=0.
\]
\end{definition}

We refer the reader to \cite{dro-12-gra,koala} for 
a proof of the following lemma.

\begin{lemma}[Regularity of the limit]\label{lem:reglim}
Let $(\disc_m)_{m\in\N}$ be a sequen\-ce of space-time gradient discretisations,
in the sense of Definition \ref{def-stcons}, that is \emph{coercive} and \emph{limit-conforming}
in the sense of Definitions \ref{def-coer} and \ref{def-graddivcons}. Let, for any $m\in\N$,
$v_m=(v^{(n)}_m)_{n=0,\ldots,N_m}\subset X_{\disc_m,0}$ be such that, with
the notations in \eqref{def-stfunctions}, $(\nabla_{\disc_m}v_m)_{m\in\N}$
is bounded in $L^p(\O\times (0,T))^d$.

Then there exists $v\in L^p(0,T;W^{1,p}_0(\O))$ such that, up to a subsequence as $m\to\infty$,
$\Pi_{\disc_m}v_m\to v$ weakly in $L^p(\O\times (0,T))$ and $\nabla_{\disc_m}v_m
\to \nabla v$ weakly in $L^p(\O\times(0,T))^d$.
\end{lemma}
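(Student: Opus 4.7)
The plan is to proceed in two main steps: first, exploit coercivity to extract, up to a subsequence, weak limits of $\Pi_{\disc_m}v_m$ and $\nabla_{\disc_m}v_m$; second, use limit-conformity with tensorised test functions to identify the two limits as $v$ and $\nabla v$, and simultaneously to enforce the zero-trace condition on $v(\cdot,t)$ for a.e.\ $t$. By the coercivity assumption, one has the $L^p$-Poincar\'e-type estimate $\|\Pi_{\disc_m}v_m\|_{L^p(\O\times(0,T))}\le C_P\|\nabla_{\disc_m}v_m\|_{L^p(\O\times(0,T))^d}$, so the hypothesis gives a uniform bound on $\Pi_{\disc_m}v_m$ in $L^p(\O\times(0,T))$. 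Since $p\in(1,+\infty)$, the space $L^p$ is reflexive, and a diagonal extraction yields a subsequence (still indexed by $m$) such that $\Pi_{\disc_m}v_m\to v$ weakly in $L^p(\O\times(0,T))$ and $\nabla_{\disc_m}v_m\to G$ weakly in $L^p(\O\times(0,T))^d$ for some $v$ and $G$.

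To identify $G$ and show $v\in L^p(0,T;W^{1,p}_0(\O))$, I apply the defining estimate \eqref{defwdisc} of $W_{\disc_m}$ at each time, against a tensorised test function $\psi(t)\bvarphi(\x)$ with $\psi\in C^\infty_c(0,T)$ and $\bvarphi\in W^{\div,p'}(\O)$. Integrating in time and applying H\"older's inequality gives
\[
\left|\int_0^T\!\!\int_\O \psi(t)\left[\nabla_{\disc_m}v_m\cdot\bvarphi + \Pi_{\disc_m}v_m\,\div\bvarphi\right]\d\x\d t\right|
\le \|\psi\|_{L^{p'}(0,T)}\, W_{\disc_m}(\bvarphi)\, \|\nabla_{\disc_m}v_m\|_{L^p(\O\times(0,T))^d}.
\]
Limit-conformity forces $W_{\disc_m}(\bvarphi)\to 0$ while the last factor is bounded, so the right-hand side vanishes. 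Passing to the weak limit on the left via Step~1 yields
\[
\int_0^T\!\!\int_\O \psi(t)\left[G\cdot\bvarphi + v\,\div\bvarphi\right]\d\x\d t = 0
\]
for all such $\psi$ and $\bvarphi$. By arbitrariness of $\psi$ and a standard Fubini/density argument, for a.e.\ $t\in(0,T)$,
\[
\int_\O \left[G(\cdot,t)\cdot\bvarphi + v(\cdot,t)\,\div\bvarphi\right]\d\x = 0, \quad \forall\bvarphi\in W^{\div,p'}(\O).
\]
Restricting to $\bvarphi\in C^\infty_c(\O)^d$ identifies $G(\cdot,t)=\nabla v(\cdot,t)$ in $\mathcal{D}'(\O)$, so $v(\cdot,t)\in W^{1,p}(\O)$; letting $\bvarphi$ then range over $W^{\div,p'}(\O)$ with arbitrary normal trace on $\dr\O$, and comparing with the Green formula valid on $W^{1,p}(\O)\times W^{\div,p'}(\O)$, forces the trace of $v(\cdot,t)$ on $\dr\O$ to vanish. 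Hence $v(\cdot,t)\in W^{1,p}_0(\O)$ for a.e.\ $t$, and because $v,\nabla v\in L^p(\O\times(0,T))$, one concludes $v\in L^p(0,T;W^{1,p}_0(\O))$.

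No serious obstacle is expected: this lemma is the discrete counterpart of a classical weak-compactness statement in Sobolev spaces, and the coercivity/limit-conformity machinery of the gradient discretisation framework is exactly designed to make such an argument go through in a scheme-independent way. The only care-requiring step is the passage from the space-time integrated relation to its pointwise-in-time version, which is a routine density argument but must be carried out carefully since the test space $W^{\div,p'}(\O)$ is used both to identify $\nabla v$ and to encode the Dirichlet condition.
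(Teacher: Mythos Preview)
The paper does not actually prove this lemma; it simply refers the reader to \cite{dro-12-gra,koala}. Your argument is precisely the standard proof found in those references: coercivity yields the $L^p$ bound on $\Pi_{\disc_m}v_m$, reflexivity gives weak limits, and limit-conformity applied to tensorised test functions $\psi(t)\bvarphi(\x)$ identifies $G=\nabla v$ and enforces the homogeneous boundary condition. This is correct and is exactly the intended approach.

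One small remark: your final step (``arbitrary normal trace on $\partial\O$ forces the trace of $v(\cdot,t)$ to vanish'') tacitly assumes enough boundary regularity for trace theory. The cleaner phrasing, valid for any open bounded $\O$, is to test against restrictions of $\bvarphi\in C^\infty_c(\R^d)^d$ (which lie in $W^{\div,p'}(\O)$); this shows that the zero-extension $\tilde v$ satisfies $\nabla\tilde v=\tilde G$ on $\R^d$, hence $\tilde v\in W^{1,p}(\R^d)$, which is the characterisation of $W^{1,p}_0(\O)$ used in the gradient discretisation literature. This is a cosmetic adjustment, not a gap.
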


\subsection{Main results}

Uniform-in-time convergence of numerical solutions to schemes for parabolic
equations starts with a weak convergence with respect to the space
variable. This weak convergence is then used to prove a stronger convergence.
We therefore first recall a standard definition related to the weak topology of
$L^2(\O)$ (we also refer the reader to Proposition \ref{propweakunifconv}
in the appendix for a classical characterisation of the weak topology of bounded
sets in $L^2(\O)$).

\begin{definition}[Uniform-in-time $L^2(\O)$-weak convergence]
\label{defweakunifconv} 
Let $\langle\cdot,\cdot\rangle_{L^2(\O)}$ denote the inner product in $L^2(\O)$,
let $(u_m)_{m\in\N}$ be a sequence of functions $[0,T]\to L^2(\O)$
and let $u:[0,T]\mapsto L^2(\O)$.

We say that $(u_m)_{m\in\N}$ converges weakly in $L^2(\O)$ uniformly on $[0,T]$ to
$u$ if, for all $\varphi\in L^2(\O)$,
as $m\to\infty$ the sequence of functions $t\in [0,T]\to \langle u_m(t),\varphi\rangle_{L^2(\O)}$
converges uniformly on $[0,T]$ to the function $t\in [0,T]\to \langle u(t),\varphi\rangle_{L^2(\O)}$.
\end{definition}

Our first theorem states weak or space-time averaged convergence
properties of gradient schemes for \eqref{pbintrot}. These results
have already been established for Leray--Lions', Richards' and Stefan's models,
see \cite{dro-12-gra,eym-12-stef,zamm2013}.
The convergence proof we provide afterwards however covers more
non-linear model and is more compact than the previous proofs.

\begin{theorem}[Convergence of gradient schemes]\label{th:weakconv}
We assume \eqref{hypgnl} and we take a sequence $(\disc_m)_{m\in\N}$ of
space-time gradient discretisations, in the sense of Definition \ref{def-stcons},
that is coercive, consistent, limit-conforming and compact (see Section \ref{sec:propgs}).
Then for any $m\in\N$ there exists a solution $u_m$ to \eqref{ellgenfdiscllt} with
$\disc=\disc_m$.

Moreover, if we assume that
\be\label{hyp:struct}
(\forall s\in\R\,,\;\beta(s)=s)\quad
 \mbox{or} \quad
(\forall s\in\R\,,\;\zeta(s)=s),
\ee
then there exists a solution $\bu$ to \eqref{ellgenfllt} such that, up to a subsequence,
the following convergences hold as $m\to\infty$:
\be\label{conv:base}
\ba
\dsp \mbox{$\Pi_{\disc_m}\beta(u_m)\to \beta(\bu)$ weakly in $L^2(\O)$ uniformly on $[0,T]$
(see Definition \ref{defweakunifconv}),}\\
\dsp \mbox{$\Pi_{\disc_m}\nu(u_m)\to \nu(\bu)$ strongly in $L^1(\O\times(0,T))$,}\\
\dsp \mbox{$\Pi_{\disc_m}\zeta(u_m)\to \zeta(\bu)$ weakly in $L^p(\O\times(0,T))$,}\\
\dsp \mbox{$\nabla_{\disc_m}\zeta(u_m)\to \nabla\zeta(\bu)$ weakly in $L^p(\O\times(0,T))^d$.}
\ea
\ee
\end{theorem}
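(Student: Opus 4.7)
I would construct $u_m=(u_m^{(n)})_{n=0,\ldots,N_m}$ by induction on $n$. Given $u_m^{(n)}\in X_{\disc_m,0}$, the scheme \eqref{ellgenfdiscllt} restricted to test functions supported in $(t^{(n)},t^{(n+1)}]$ becomes a nonlinear system on the finite-dimensional space $X_{\disc_m,0}$. Testing with $v=\zeta(u_m^{(n+1)})$ and exploiting the convexity inequality $B(\beta(u_m^{(n+1)}))-B(\beta(u_m^{(n)}))\le \zeta(u_m^{(n+1)})(\beta(u_m^{(n+1)})-\beta(u_m^{(n)}))$ (coming from $(B\circ\beta)'=\zeta\beta'$, see Lemma \ref{lembetaB}) combined with the coercivity \eqref{hypanl2} of $\bfa$ yields an a priori bound on $\Vert\nabla_{\disc_m}\zeta(u_m^{(n+1)})\Vert_{L^p}$, hence on $u_m^{(n+1)}$ since $\Vert\nabla_{\disc_m}\cdot\Vert_{L^p}$ is a norm on $X_{\disc_m,0}$. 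Existence then follows from a Brouwer topological degree argument.

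\textbf{Global a priori estimates.} Summing the same test-function computation over time gives the discrete counterpart of \eqref{form:ippsoleps}: for every $n\in\{0,\ldots,N_m\}$,
\[
\int_\O B(\beta(\Pi_{\disc_m} u_m^{(n)}))\d\x + \underline a \sum_{k=0}^{n-1}\dt^{(k+\half)}\Vert\nabla_{\disc_m}\zeta(u_m^{(k+1)})\Vert_{L^p(\O)^d}^p \le \int_\O B(\beta(\Pi_{\disc_m}\interp_{\disc_m} u_{\rm ini}))\d\x + \int_0^{t^{(n)}}\!\int_\O f\,\Pi_{\disc_m}\zeta(u_m).
\]
Young's inequality, the quadratic-type growth of $B$, and the coercivity of $(\disc_m)_m$ then yield uniform bounds on $\nabla_{\disc_m}\zeta(u_m)$ in $L^p(\O\times(0,T))^d$, on $\Pi_{\disc_m}\beta(u_m)$ in $L^\infty(0,T;L^2(\O))$, and on the $L^\infty(0,T;L^1(\O))$ norm of $B(\beta(\Pi_{\disc_m} u_m))$.

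\textbf{Compactness and extraction.} Lemma \ref{lem:reglim} then provides, up to a subsequence, $\overline\zeta\in L^p(0,T;W^{1,p}_0(\O))$ with $\Pi_{\disc_m}\zeta(u_m)\to\overline\zeta$ weakly in $L^p(\O\times(0,T))$ and $\nabla_{\disc_m}\zeta(u_m)\to\nabla\overline\zeta$ weakly in $L^p(\O\times(0,T))^d$. The discrete time derivative of $\Pi_{\disc_m}\beta(u_m)$ is controlled in the dual seminorm $|\cdot|_{\star,\disc_m}$ of \eqref{nstnormemel} directly from the scheme equation and the previous bound; feeding this into the uniform-in-time weak-in-space discrete Aubin--Simon result of Section \ref{sec:prelim}, together with the discontinuous Ascoli--Arzel\`a theorem of the appendix, produces $\Pi_{\disc_m}\beta(u_m)\to\overline b$ in $C([0,T];L^2(\O)\weak)$. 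Under \eqref{hyp:struct}, either $\overline b$ or $\overline\zeta$ coincides with a common candidate $\bu$; the compactness property (Definition \ref{def-comp}) then upgrades the corresponding weak convergence into strong $L^p$ convergence via a Kolmogorov-type argument, and the Lipschitz continuity and non-decreasing character of $\beta,\zeta$ propagate this to the strong $L^1(\O\times(0,T))$ convergence $\Pi_{\disc_m}\nu(u_m)\to\nu(\bu)$.

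\textbf{Identification via Minty; the main obstacle.} The crux is passing to the limit in the nonlinear flux $\bfa(\x,\Pi_{\disc_m}\nu(u_m),\nabla_{\disc_m}\zeta(u_m))$; I would use a Minty monotonicity argument. Applying the strong-weak lower semicontinuity of the convex functional $w\mapsto\int_\O B(\beta(w))$ on weak-$L^2$ limits (Lemma \ref{lem:Zconv}) at $t=T$, combined with the discrete energy inequality of Step 2 and with the continuous energy identity \eqref{form:ippsoleps} for the candidate limit (justified rigorously by Lemma \ref{ricipp} and Corollary \ref{ric:bou}), leads to
\[
\limsup_{m\to\infty}\int_0^T\!\int_\O \bfa(\cdot,\Pi_{\disc_m}\nu(u_m),\nabla_{\disc_m}\zeta(u_m))\cdot\nabla_{\disc_m}\zeta(u_m)\d\x\d t \le \int_0^T\!\int_\O \bfa(\cdot,\nu(\bu),\nabla\zeta(\bu))\cdot\nabla\zeta(\bu)\d\x\d t.
\]
Inserting this bound into the monotonicity inequality \eqref{hypanl3} tested against an arbitrary $\bvarphi\in L^p(\O\times(0,T))^d$, and relying on the strong convergence of $\Pi_{\disc_m}\nu(u_m)$ from Step 3 to absorb the Carath\'eodory dependence of $\bfa$ in its second variable, identifies the weak limit of the flux as $\bfa(\cdot,\nu(\bu),\nabla\zeta(\bu))$. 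Passing to the limit in \eqref{ellgenfdiscllt} then yields \eqref{ellgenfllt}. The main obstacle is precisely this Minty step: it hinges on the strong convergence of $\Pi_{\disc_m}\nu(u_m)$, which is exactly what the structural assumption \eqref{hyp:struct} secures. Removing \eqref{hyp:struct} is the role of the discrete compensated compactness theorem developed in Section \ref{sec:hypstruct}.
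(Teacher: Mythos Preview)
Your overall strategy matches the paper's: a priori estimates, compactness (weak-uniform-in-time for $\beta(u_m)$, weak for $\zeta(u_m)$ and its gradient, strong for $\nu(u_m)$), identification of the limits under \eqref{hyp:struct}, passage to the limit in the scheme with an abstract flux limit $\bfA$, and a Minty argument to identify $\bfA$. Two points, however, are not quite right as written.

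\medskip

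\textbf{Time translates for the strong convergence of $\nu(u_m)$.} You invoke only the compactness property (Definition \ref{def-comp}) to run a ``Kolmogorov-type argument''. That definition controls only the \emph{space} translates of $\Pi_{\disc_m}\zeta(u_m)$; Kolmogorov on $\O\times(0,T)$ also requires uniform control of the \emph{time} translates. The paper obtains this via Lemma \ref{esttimetransl}, whose proof rests on the algebraic inequality \eqref{relchizetabeta},
\[
(\nu(a)-\nu(b))^2 \le L_\beta L_\zeta\,(\zeta(a)-\zeta(b))(\beta(a)-\beta(b)),
\]
which is precisely what allows one to trade the time-regularity of $\beta(u_m)$ (the dual-seminorm bound on $\delta_{\disc_m}\beta(u_m)$) against the space-regularity of $\zeta(u_m)$ (the gradient bound). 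Neither the space-compactness alone nor the Lipschitz continuity of $\beta,\zeta$ produces this; without \eqref{relchizetabeta} (or an equivalent device) the Kolmogorov step does not close. Relatedly, once strong convergence of $\Pi_{\disc_m}\nu(u_m)$ to some $\overline{\nu}$ is established, the identification $\overline{\nu}=\nu(\bu)$ (and $\overline{\zeta}=\zeta(\bu)$) under \eqref{hyp:struct} is not a consequence of Lipschitz continuity but of the monotone Minty trick of Lemma \ref{lem-minty}: e.g.\ when $\beta=\mathrm{Id}$, one has $\Pi_{\disc_m}u_m\rightharpoonup\bu$ weakly and $\zeta(\Pi_{\disc_m}u_m)\to\overline{\nu}$ strongly, and Lemma \ref{lem-minty} is what yields $\overline{\nu}=\zeta(\bu)$.

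\medskip

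\textbf{Order of operations in the Minty step for the flux.} Your displayed $\limsup$ inequality places $\bfa(\cdot,\nu(\bu),\nabla\zeta(\bu))$ on the right-hand side, and you justify it by the continuous energy identity \eqref{form:ippsol}. But that identity is available only once $\bu$ is known to solve \eqref{ellgenfllt}, which is exactly what you are proving. The paper first passes to the limit in the scheme to obtain \eqref{ellgenflltdd} with the \emph{unidentified} weak flux limit $\bfA$, then applies Lemma \ref{ricipp} to this equation (not Corollary \ref{ric:bou}) to get
\[
\limsup_{m\to\infty}\int_0^T\!\!\int_\O \bfa(\cdot,\Pi_{\disc_m}\nu(u_m),\nabla_{\disc_m}\zeta(u_m))\cdot\nabla_{\disc_m}\zeta(u_m)\,\d\x\d t \le \int_0^T\!\!\int_\O \bfA\cdot\nabla\zeta(\bu)\,\d\x\d t,
\]
and only then runs Minty to conclude $\bfA=\bfa(\cdot,\nu(\bu),\nabla\zeta(\bu))$. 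Reordering your argument this way removes the circularity.
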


\begin{remark} Since $|\nu|\le L_\zeta|\beta|$ and $|\nu|\le L_\beta|\zeta|$,
the $L^\infty(0,T;L^2(\O))$ bound on $\Pi_{\disc_m}\beta(u_m)$ and the $L^p(\O\times (0,T))$
bound on $\Pi_{\disc_m}\zeta(u_m)$ (see Lemma \ref{estimldlp} and Definition
\ref{def-coer}) shows that the
strong convergence of $\Pi_{\disc_m}\nu(u_m)$ is also valid in
$L^q(0,T;L^r(\O))$ for any $(q,r)\in [1,\infty)\times [1,2)$, any
$(q,r)\in [1,p)^2$ and, of course, any space interpolated between these two cases.
\end{remark}

\begin{remark} We do not assume the existence of a solution $\bu$ to the continuous
problem, our convergence analysis will establish this existence.
\end{remark}

\begin{remark} Assumption \eqref{hyp:struct} covers Richards' and Stefan's models,
as well as many other non-linear parabolic equations. As we prove in
Section \ref{sec:hypstruct}, this assumption is actually not required if $p\ge 2$.
However, we first state and prove Theorem 
\ref{th:weakconv} under \eqref{hyp:struct} in order to simplify the presentation.
See also Remark \ref{rem-struct}.
\end{remark}

The main innovation of this paper is the following theorem, which states the
\emph{uniform-in-time strong-in-space} convergence
of numerical methods for fully non-linear degenerate parabolic equations with no regularity assumptions on the
data.

\begin{theorem}[Uniform-in-time convergence]\label{th:uniftime}
Under Assumptions \eqref{hypgnl}, let $(\disc_m)_{m\in\N}$ be a sequence of
space-time gradient discretisations, in the sense of Definition \ref{def-stcons},
that is coercive, consistent, limit-conforming and compact (see Section \ref{sec:propgs}).
We assume that $u_m$ is a solution to \eqref{ellgenfdiscllt}
with $\disc=\disc_m$ that converges as $m\to\infty$ to a solution $\bu$ of \eqref{ellgenfllt}
in the sense \eqref{conv:base}.

Then, as $m\to\infty$, $\Pi_{\disc_m}\nu(u_m)\to \nu(\bu)$ strongly in 
$L^\infty(0,T;L^2(\O))$.
\end{theorem}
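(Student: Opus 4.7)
The proof plan closely mirrors the continuous argument sketched in Section \ref{sec:genpcp}, transposed to the gradient scheme setting and combined with the discontinuous Ascoli--Arzel\`a / uniform convergence machinery of the appendix.

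The first step is to reduce uniform convergence to pointwise convergence along arbitrary sequences of times. Using Lemma \ref{equiv-unifconv} (the characterisation of uniform convergence), it suffices to show that for any $T_0 \in [0,T]$ and any sequence $(T_m)_{m\in\N}$ with $T_m \to T_0$, one has $\Pi_{\disc_m}\nu(u_m)(\cdot,T_m) \to \nu(\bu)(\cdot,T_0)$ strongly in $L^2(\O)$. Fix such $T_0$ and $(T_m)_{m\in\N}$; for each $m$, let $n_m$ be the index such that $T_m \in (t^{(n_m)}_{\disc_m}, t^{(n_m+1)}_{\disc_m}]$, so that $\Pi_{\disc_m}\beta(u_m)(\cdot,T_m) = \Pi_{\disc_m}\beta(u_m^{(n_m+1)})$ (and analogously for $\zeta$ and $\nu$). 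By the uniform-in-time $L^2$-weak convergence in \eqref{conv:base}, $\Pi_{\disc_m}\beta(u_m)(\cdot,T_m) \rightharpoonup \beta(\bu)(\cdot,T_0)$ weakly in $L^2(\O)$.

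The second step is the lower bound. Since $B$ is convex and lower semi-continuous (Lemma \ref{lembetaB}), the strong-weak lower semi-continuity of convex integrals yields
\be\label{plan:lb}
\int_\O B(\beta(\bu)(\x,T_0))\d\x \le \liminf_{m\to\infty} \int_\O B(\Pi_{\disc_m}\beta(u_m)(\x,T_m))\d\x,
\ee
using the piecewise-constant property \eqref{prop:pcr}. The third step is the matching upper bound, obtained through a discrete energy identity. Taking as test functions in \eqref{ellgenfdiscllt} the truncation $v^{(n)} = \zeta(u_m^{(n)})\mathbf{1}_{n\le n_m+1}$ (plus an appropriate treatment of the last interval, truncated at $T_m$), and exploiting the chain-rule inequality $B(\beta(a))-B(\beta(b)) \le \zeta(a)(\beta(a)-\beta(b))$ that follows from $(B\circ\beta)' = \zeta\beta'$ and convexity of $B\circ\beta_r$, we obtain the discrete energy inequality analogous to \eqref{form:ippsoleps}:
\be\label{plan:energy}
\int_\O B(\Pi_{\disc_m}\beta(u_m)(\x,T_m))\d\x + \int_0^{T_m}\!\!\int_\O \bfa(\x,\Pi_{\disc_m}\nu(u_m),\nabla_{\disc_m}\zeta(u_m)) \cdot \nabla_{\disc_m}\zeta(u_m)\d\x\d t
\le \int_\O B(\beta(\Pi_{\disc_m}\interp_{\disc_m}u_{\rm ini}))\d\x + \int_0^{T_m}\!\!\int_\O f\, \Pi_{\disc_m}\zeta(u_m)\d\x\d t.
\ee

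The fourth step is to combine \eqref{plan:energy} with the Minty trick. Writing
\[
\int_0^{T_m}\!\!\int_\O \bigl[\bfa(\x,\Pi_{\disc_m}\nu(u_m),\nabla_{\disc_m}\zeta(u_m)) - \bfa(\x,\Pi_{\disc_m}\nu(u_m),\nabla\zeta(\bu))\bigr]\cdot\bigl[\nabla_{\disc_m}\zeta(u_m)-\nabla\zeta(\bu)\bigr]\d\x\d t \ge 0
\]
and expanding using the weak convergences in \eqref{conv:base}, the strong convergence of $\Pi_{\disc_m}\nu(u_m)$, and continuity of $\bfa$ in its second argument (reasoning as in the continuous case \eqref{conv-theorique2}), we obtain
\[
\liminf_{m\to\infty}\int_0^{T_m}\!\!\int_\O \bfa(\x,\Pi_{\disc_m}\nu(u_m),\nabla_{\disc_m}\zeta(u_m))\cdot\nabla_{\disc_m}\zeta(u_m)\d\x\d t \ge \int_0^{T_0}\!\!\int_\O \bfa(\x,\nu(\bu),\nabla\zeta(\bu))\cdot\nabla\zeta(\bu)\d\x\d t.
\]
Passing to the $\limsup$ in \eqref{plan:energy}, using consistency of $\interp_{\disc_m}$ on $u_{\rm ini}$, and subtracting the continuous energy identity \eqref{form:ippsoleps} satisfied by $\bu$ at time $T_0$ (which is guaranteed by the rigorous version of Lemma \ref{ricipp} / Corollary \ref{ric:bou}), we deduce the upper bound
\be\label{plan:ub}
\limsup_{m\to\infty}\int_\O B(\Pi_{\disc_m}\beta(u_m)(\x,T_m))\d\x \le \int_\O B(\beta(\bu)(\x,T_0))\d\x.
\ee
Together with \eqref{plan:lb} and the weak convergence of $\Pi_{\disc_m}\beta(u_m)(\cdot,T_m)$ to $\beta(\bu)(\cdot,T_0)$, the uniform convexity property \eqref{Bunifconv} of $B$ forces $\Pi_{\disc_m}\nu(u_m)(\cdot,T_m)\to\nu(\bu)(\cdot,T_0)$ strongly in $L^2(\O)$, which is exactly what was needed.

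The main obstacle I expect is the rigorous derivation of the discrete energy inequality \eqref{plan:energy}: the chain-rule inequality connecting the discrete time-jumps $\delta_\disc \beta(u)$ tested against $\zeta(u^{(n+1)})$ to the increments of $B(\beta(u))$ is the crux, and must accommodate the fact that $\beta$ is not assumed strictly increasing (so $B\circ\beta_r$ is the right convex primitive). A secondary technical difficulty is handling the truncation at $T_m$ inside the last time slab $(t^{(n_m)},t^{(n_m+1)}]$, which yields an extra term controlled by the time step $\dt_{\disc_m}\to 0$. Once these ingredients are in place, the rest is a faithful discrete reproduction of the continuous argument of Section \ref{sec:genpcp}.
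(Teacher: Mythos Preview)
Your proposal is correct and follows the same route as the paper's proof. The discrete energy inequality \eqref{plan:energy} you flag as the main obstacle is exactly \eqref{app:ipp} in Lemma \ref{estimldlp}, already established; the only point you gloss over in the final step is that \eqref{Bunifconv} compares $\nu(s)$ and $\nu(s')$ for \emph{real} arguments $s,s'$, so one cannot directly plug in the weak-limit value $\beta(\bu)(\cdot,T_0)$ --- the paper inserts an auxiliary sequence $(s_m)_m$ of ``good'' times (where $\beta(\bu)(\cdot,s_m)=\beta(\bu(\cdot,s_m))$) converging to $T_0$, applies \eqref{Bunifconv} between $\Pi_{\disc_m}u_m(\cdot,T_m)$ and $\bu(\cdot,s_m)$, and then uses the continuity of $\nu(\bu):[0,T]\to L^2(\O)$ from Corollary \ref{ric:bou} to pass from $s_m$ to $T_0$.
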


\begin{remark} Since the functions $\Pi_{\disc_m}\nu(u_m)$ are piecewise
constant in time, their convergence in $L^\infty(0,T;L^2(\O))$ is actually
a uniform-in-time convergence (not ``uniform a.e. in time'').
\end{remark}

The last theorem completes our convergence result by stating the strong
space-time averaged convergence of the discrete gradients. Its proof is
inspired by the study of gradient schemes for Leray--Lions operators
made in \cite {dro-12-gra}.

\begin{theorem}[Strong convergence of gradients]\label{th:strgrad}
Under Assumptions \eqref{hypgnl}, let $(\disc_m)_{m\in\N}$ be a sequence of
space-time gradient discretisations, in the sense of Definition \ref{def-stcons},
that is coercive, consistent, limit-conforming and compact (see Section \ref{sec:propgs}).
We assume that $u_m$ is a solution to \eqref{ellgenfdiscllt}
with $\disc=\disc_m$ that converges as $m\to\infty$ to a solution $\bu$ of \eqref{ellgenfllt}
in the sense \eqref{conv:base}. We also assume that $\bfa$ is strictly monotone in the
sense:
\be\label{a:strictmon}
(\bfa(\x,s,\bxi) - \bfa(\x,s,\bchi))\cdot(\bxi-\bchi)> 0,\hbox{ for a.e. }\x\in\O,\ \forall s\in \R,\ \forall\bxi\not=\bchi\in\R^d.
\ee
Then, as $m\to\infty$, $\Pi_{\disc_m}\zeta(u_m)\to \zeta(\bu)$ strongly in $L^p(\O\times (0,T))$
and $\nabla_{\disc_m} \zeta(u_m)\to \nabla  \zeta(\bu)$ strongly in $L^p(\O\times(0,T))^d$.
\end{theorem}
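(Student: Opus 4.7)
The plan is to adapt the Minty-type monotonicity argument of \cite{dro-12-gra} to the present degenerate setting, using Theorem \ref{th:uniftime} to obtain the pointwise-in-time information that is missing in the pure Leray--Lions case. I would begin by writing the discrete counterpart of the energy equality \eqref{form:ippsoleps} at $t=T$ for $u_m$ (the discrete inequality \eqref{app:ipp} alluded to in the remark following \eqref{form:ippsoleps}), taking $\limsup$ as $m\to\infty$, and comparing with the continuous energy equality satisfied by the limit $\bu$. On the left-hand side the $B$-term is treated via weak-$L^2$ lower semi-continuity of $w\mapsto\int_\O B(\beta(w))\d\x$ applied to the weak convergence of $\Pi_{\disc_m}\beta(u_m)(\cdot,T)$ towards $\beta(\bu)(\cdot,T)$ (which follows from \eqref{conv:base}); the right-hand side is handled by the weak $L^p$ convergence of $\Pi_{\disc_m}\zeta(u_m)$. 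This yields
\[
\limsup_{m\to\infty}\int_0^T\!\!\int_\O \bfa(\x,\Pi_{\disc_m}\nu(u_m),\nabla_{\disc_m}\zeta(u_m))\cdot\nabla_{\disc_m}\zeta(u_m)\,\d\x\,\d t \le \int_0^T\!\!\int_\O \bfa(\x,\nu(\bu),\nabla\zeta(\bu))\cdot\nabla\zeta(\bu)\,\d\x\,\d t.
\]

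Second, I would apply the Minty trick with the non-negative quantity
\[
A_m := \int_0^T\!\!\int_\O\bigl[\bfa(\x,\Pi_{\disc_m}\nu(u_m),\nabla_{\disc_m}\zeta(u_m))-\bfa(\x,\Pi_{\disc_m}\nu(u_m),\nabla\zeta(\bu))\bigr]\cdot\bigl[\nabla_{\disc_m}\zeta(u_m)-\nabla\zeta(\bu)\bigr]\,\d\x\,\d t,
\]
which is non-negative by \eqref{hypanl3}. Expanding $A_m$ into four terms, the diagonal term is controlled by the $\limsup$ above, the term $\int\!\int \bfa(\x,\Pi_{\disc_m}\nu(u_m),\nabla_{\disc_m}\zeta(u_m))\cdot\nabla\zeta(\bu)$ passes to the limit by the weak $L^{p'}$ convergence in \eqref{conv:base}, and the two terms containing $\bfa(\x,\Pi_{\disc_m}\nu(u_m),\nabla\zeta(\bu))$ are treated by dominated convergence: along a subsequence, Theorem \ref{th:uniftime} provides a.e.\ convergence of $\Pi_{\disc_m}\nu(u_m)$ to $\nu(\bu)$, continuity of $\bfa$ gives a.e.\ convergence of $\bfa(\x,\Pi_{\disc_m}\nu(u_m),\nabla\zeta(\bu))$, and \eqref{hypanl4} supplies an $L^{p'}$ domination, so this sequence converges strongly in $L^{p'}$ to $\bfa(\x,\nu(\bu),\nabla\zeta(\bu))$. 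Summing, $\limsup A_m\le 0$ and therefore $A_m\to 0$.

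Third, strict monotonicity \eqref{a:strictmon} upgrades $A_m\to 0$ to strong gradient convergence: the non-negative integrand of $A_m$ vanishes in $L^1$, so along a further subsequence it tends to zero a.e., and \eqref{a:strictmon} applied pointwise forces $\nabla_{\disc_m}\zeta(u_m)\to\nabla\zeta(\bu)$ a.e. The $L^p$ bound on $\nabla_{\disc_m}\zeta(u_m)$ provides equi-integrability of $|\nabla_{\disc_m}\zeta(u_m)|^p$ (standard uniform integrability argument using the control from \eqref{hypanl2}--\eqref{hypanl4}), and Vitali's theorem delivers the strong $L^p$ convergence; uniqueness of the limit extends it to the full sequence. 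To obtain the strong $L^p$ convergence of $\Pi_{\disc_m}\zeta(u_m)$ itself, I would choose $w_m\in X_{\disc_m,0}$ via consistency with $\Pi_{\disc_m}w_m\to\zeta(\bu)$ and $\nabla_{\disc_m}w_m\to\nabla\zeta(\bu)$ in $L^p$, decompose $\Pi_{\disc_m}\zeta(u_m)-\zeta(\bu)=\Pi_{\disc_m}(\zeta(u_m)-w_m)+(\Pi_{\disc_m}w_m-\zeta(\bu))$, and apply the Poincar\'e-type estimate of Definition \ref{def-coer} to the first piece. The main obstacle is the strong $L^{p'}$ convergence of $\bfa(\x,\Pi_{\disc_m}\nu(u_m),\nabla\zeta(\bu))$: it hinges on pointwise a.e.\ convergence of $\Pi_{\disc_m}\nu(u_m)$ and is precisely where the uniform-in-time strong convergence of Theorem \ref{th:uniftime} becomes indispensable; with only a weak or $L^1(\O\times(0,T))$ convergence of $\nu(u_m)$, the Minty identity would not close and the strict monotonicity \eqref{a:strictmon} could not be exploited.
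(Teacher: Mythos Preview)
Your argument is essentially correct and follows the same Minty-type route as the paper: expand the non-negative quantity $A_m$, pass to the limit in each of the four pieces to get $A_m\to 0$, then use strict monotonicity together with Vitali's theorem. The paper does exactly this (writing $f_m$ for your integrand), the only organisational difference being that the paper first combines \eqref{eq:limtermembetant} with the $\limsup$ bound to obtain the \emph{equality} \eqref{cvaD} for the diagonal term, whereas you keep only the $\limsup$ inequality; either version yields $\limsup_m A_m\le 0$ and hence $A_m\to 0$.

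Two points deserve correction. First, the weak $L^{p'}$ convergence of $\bfa(\cdot,\Pi_{\disc_m}\nu(u_m),\nabla_{\disc_m}\zeta(u_m))$ towards $\bfa(\cdot,\nu(\bu),\nabla\zeta(\bu))$ is \emph{not} part of \eqref{conv:base}; it is the identification \eqref{resumintyt} obtained in the proof of Theorem \ref{th:weakconv}, and you should cite it (or re-derive it) rather than \eqref{conv:base}. Second, and more importantly, your closing remark that Theorem \ref{th:uniftime} is ``indispensable'' for the strong $L^{p'}$ convergence of $\bfa(\cdot,\Pi_{\disc_m}\nu(u_m),\nabla\zeta(\bu))$ is not accurate: the strong $L^1(\O\times(0,T))$ convergence of $\Pi_{\disc_m}\nu(u_m)$ already contained in \eqref{conv:base} gives a.e.\ convergence along a subsequence, and together with the domination \eqref{hypanl4} this is all the dominated-convergence argument needs (this is precisely how the paper handles \eqref{ineqmonot} in Step~3 of Theorem \ref{th:weakconv}'s proof). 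So Theorem \ref{th:uniftime} is not what closes the Minty identity; in the paper it enters only through \eqref{limbetatilde-bis}, and your own derivation of the $\limsup$ inequality via weak lower semi-continuity of $\int_\O B(\cdot)$ shows that even that is avoidable at this stage.
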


\begin{remark}\label{rem-struct} Theorems \ref{th:uniftime} and \ref{th:strgrad}
do not require the structural assumption \eqref{hyp:struct}; they
only require that the convergences \eqref{conv:base} hold.
\end{remark}

\section{Preliminaries}\label{sec:prelim}

We establish here a few results which will be used in the analysis
of the gradient scheme \eqref{ellgenfdiscllt}.

\subsection{Uniform-in-time compactness for space-time gradient discretisations}

Aubin-Simon compactness results roughly consist in establishing the compactness of a sequence of space-time functions from some strong bounds on the functions with respect to the space variable
(typically, bounds in a Sobolev space with positive exponent)
and some weaker bounds on their time derivatives (typically, bounds in a Sobolev space with
a negative exponent, i.e. the dual of a Sobolev space with positive exponent).
Several variants exist, including for piecewise constant-in-time
functions appearing in the numerical approximation of parabolic equations
\cite{jungel1,jungel2,amann,gal-12-com}. Although quite strong in space,
the convergence results provided by these discrete versions of Aubin-Simon theorems 
are only averaged-in-time -- i.e. in an $L^p(0,T;E)$ space where $E$ is a normed space.

Theorem \ref{unifweakGDcomp} can be considered as a discrete form of an Aubin-Simon theorem,
that establishes a \emph{uniform-in-time} but \emph{weak-in-space} compactness result.
The corresponding convergence is therefore weaker than in Theorem \ref{th:uniftime},
but it is a critical initial
step for establishing the \emph{uniform-in-time strong-in-space} convergence result.
Given that the functions considered here are piecewise constant in time, it might
be surprising to obtain a uniform-in-time convergence result; everything
hinges on the fact that the jumps in time tend to vanish as the time step goes to zero.
The proof of Theorem \ref{unifweakGDcomp} is based on the results
in Section \ref{sec:comptime}, and in particular on the discontinuous Ascoli-Arzel\`a
theorem stated and proved there.

\begin{theorem}[Uniform-in-time weak-in-space discrete Aubin-Simon theorem]
\label{unifweakGDcomp}~

Let $T>0$ and take a sequence $(\disc_m)_{m\in\N} = (X_{\disc_m,0}, \Pi_{\disc_m},\nabla_{\disc_m}, \interp_{\disc_m},(t_m^{(n)})_{n=0,\ldots,N_m})_{m\in\N}$
of space-time gradient discretisations, in the sense of Definition \ref{def-stcons}, that
is consistent in the sense of Definition \ref{def-cons}.

For any $m\in\N$, let $v_m=(v_m^{(n)})_{n=0,\ldots,N_m} \subset X_{\disc_m,0}$.
If there exists $q>1$ and $C>0$ such that, for any $m\in\N$,
\begin{equation}
||\Pi_{\disc_m}v_m||_{L^\infty(0,T;L^2(\O))}\le C\quad\mbox{ and }\quad
\int_0^T |\delta_m v_m(t)|_{\star,\disc_m}^{q}\d t\le C,
\label{est-deltavm}\end{equation}
then the sequence $(\Pi_{\disc_m}v_m)_{m\in\N}$ is relatively compact
uniformly-in-time and weakly in $L^2(\O)$, i.e. it
has a subsequence that converges in the sense of Definition \ref{defweakunifconv}.

Moreover, any limit of such a subsequence is continuous $[0,T]\to L^2(\O)$
for the weak topology of $L^2(\O)$.
\end{theorem}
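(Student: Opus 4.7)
The plan is to apply the discontinuous Ascoli--Arzelà theorem of Section \ref{sec:comptime} to the scalar test-function averages $F_{m,\varphi}(t):=\langle \Pi_{\disc_m}v_m(t),\varphi\rangle_{L^2(\O)}$ for $\varphi$ in a countable dense subset of $L^2(\O)\cap W^{1,p}_0(\O)$. The uniform $L^\infty(0,T;L^2(\O))$-bound on $\Pi_{\disc_m}v_m$ immediately gives $|F_{m,\varphi}(t)|\le C\|\varphi\|_{L^2(\O)}$, so the main task is to produce a uniform modulus of almost-continuity for $F_{m,\varphi}$ that permits vanishing jumps of size $O(\dt_{\disc_m}^{1/q'})$.

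To build this modulus, I would fix $\varphi\in L^2(\O)\cap W^{1,p}_0(\O)$ and, using consistency \eqref{def:sdisc}, select $z_m\in X_{\disc_m,0}$ with $\Pi_{\disc_m}z_m\to\varphi$ in $L^2(\O)$ and $\nabla_{\disc_m}z_m\to\nabla\varphi$ in $L^p(\O)^d$, so that $\|\nabla_{\disc_m}z_m\|_{L^p(\O)^d}$ stays bounded. For $0\le s<t\le T$ with $s\in (t_m^{(k)},t_m^{(k+1)}]$ and $t\in(t_m^{(n)},t_m^{(n+1)}]$, I would decompose
\begin{align*}
F_{m,\varphi}(t)-F_{m,\varphi}(s) ={}& \int_\O\bigl(\Pi_{\disc_m}v_m(t)-\Pi_{\disc_m}v_m(s)\bigr)(\varphi-\Pi_{\disc_m}z_m)\,\d\x\\
&+\sum_{j=k+1}^{n}\dt^{(j+\half)}\int_\O \Pi_{\disc_m}\delta_m^{(j+\half)}v_m\,\Pi_{\disc_m}z_m\,\d\x.
\end{align*}
The first term is bounded by $2C\|\varphi-\Pi_{\disc_m}z_m\|_{L^2(\O)}=:\eta_m(\varphi)\to 0$. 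For the second, the dual seminorm \eqref{nstnormemel} yields
$$\left|\int_\O \Pi_{\disc_m}\delta_m^{(j+\half)}v_m\,\Pi_{\disc_m}z_m\,\d\x\right|\le |\delta_m^{(j+\half)}v_m|_{\star,\disc_m}\,\|\nabla_{\disc_m}z_m\|_{L^p(\O)^d},$$
and Hölder's inequality in $j$ with exponents $q,q'$ combined with \eqref{est-deltavm} gives
$$|F_{m,\varphi}(t)-F_{m,\varphi}(s)|\le \eta_m(\varphi)+\|\nabla_{\disc_m}z_m\|_{L^p(\O)^d}\,C^{1/q}\bigl(|t-s|+2\dt_{\disc_m}\bigr)^{1/q'}.$$

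With this modulus in hand, I would apply the discontinuous Ascoli--Arzelà theorem to $F_{m,\varphi}$ for each $\varphi$ in a countable dense subset $\{\varphi_i\}\subset L^2(\O)\cap W^{1,p}_0(\O)$, diagonalise, and extract a single subsequence along which $F_{m,\varphi_i}$ converges uniformly on $[0,T]$ for every $i$. A $3\varepsilon$ density argument using the bound $|F_{m,\varphi}|\le C\|\varphi\|_{L^2(\O)}$ then extends this uniform convergence to every $\varphi\in L^2(\O)$, which is precisely Definition \ref{defweakunifconv}. The weak-$*$ continuity of the limit $u$ follows immediately: for each $\varphi$ in the dense subset, $\langle u(\cdot),\varphi\rangle_{L^2(\O)}$ is a uniform limit of piecewise-constant functions whose maximal jump is controlled by $\eta_m(\varphi)+K(\varphi)\dt_{\disc_m}^{1/q'}\to 0$, hence is continuous, and density plus the $L^\infty(0,T;L^2(\O))$-bound on $u$ carry continuity to every $\varphi\in L^2(\O)$. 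The main obstacle is the invocation of the discontinuous Ascoli--Arzelà theorem: the classical version fails because $F_{m,\varphi}$ genuinely jumps at each $t_m^{(n)}$, so the heart of the argument lies in checking that the modulus above fits the generalisation of Section \ref{sec:comptime}; the remaining steps are routine Hölder and density manipulations.
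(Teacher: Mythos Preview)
Your argument is correct and rests on the same ingredients as the paper's proof: the discontinuous Ascoli--Arzel\`a theorem, the telescoping of $\Pi_{\disc_m}v_m(t)-\Pi_{\disc_m}v_m(s)$ into a sum of jumps, the dual semi-norm bound combined with H\"older in time, and the consistency-based approximation of the test function. The only difference is organisational. You apply Theorem~\ref{genascoli} to each scalar function $F_{m,\varphi_i}:[0,T]\to\R$, diagonalise over a countable dense family $(\varphi_i)\subset L^2(\O)\cap W^{1,p}_0(\O)$, and then run a $3\varepsilon$ density argument to reach all of $L^2(\O)$. The paper instead applies Theorem~\ref{genascoli} \emph{once}, directly to $\Pi_{\disc_m}v_m:[0,T]\to E$, where $E$ is the closed ball of radius $C$ in $L^2(\O)$ endowed with the weak-topology metric $d_E$ of Proposition~\ref{propweakunifconv} (a weighted series over a dense sequence $(\varphi_l)\subset C^\infty_c(\O)$). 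Your diagonalisation and density steps are thereby absorbed into the very definition of $d_E$, and the dominated convergence theorem for series replaces your $3\varepsilon$ argument; the weak continuity of the limit then comes for free from the last conclusion of Theorem~\ref{genascoli} rather than requiring a separate check. Both routes are equally valid; the paper's is slightly more compact, yours is perhaps more transparent for a reader unfamiliar with metrising the weak topology.
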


\begin{remark}\label{rem-deltavm}
The bound on $|\delta_m v_m|_{\star,\disc_m}$
is often a consequence of a numerical scheme satisfied by $v_m$ and
of a bound on $||\nabla_{\disc_m}v_m||_{L^p(\O \times (0,T))^d}$,
see the proof of Lemma \ref{estl1l2snstdt} for example.
\end{remark}

\begin{proof}
This result is a consequence of the discontinuous Ascoli-Arzel\`a theorem
(Theorem \ref{genascoli}) with $K=[0,T]$ and $E$ the ball of radius $C$ in $L^2(\O)$ endowed with the
weak topology. We let $(\varphi_l)_{l\in\N}\subset C^\infty_c(\O)$ be a dense
sequence in $L^2(\O)$ and equipp $E$ with the metric \eqref{def-distweak}
from these $\varphi_l$ (see Proposition \ref{propweakunifconv}).
The set $E$ is metric compact and therefore complete, and the functions
$\Pi_{\disc_m}v_m$ take their values in $E$.
It remains to estimate $d_E(v_m(s),v_m(s'))$. In what follows, we drop the index $m$ in $\disc_m$
for the sake of legibility.

Let us define the interpolant $P_{\disc}\varphi_l\in X_{\disc,0}$ by
\be\label{def-PD}
P_\disc\varphi_l=\argmin_{w\in X_{\disc,0}}\left(||\Pi_\disc w-\varphi_l||_{L^{\max(p,2)}(\O)}
+||\nabla_\disc w-\nabla\varphi_l||_{L^p(\O)^d}\right).
\ee
For $0\le s\le s'\le T$, by writing $\Pi_\disc v_m(s')-\Pi_{\disc}v_m(s)$
as the sum of its jumps $\dt^{(n+\half)}\Pi_\disc\delta_\disc^{(n+\half)}v_m$
at the points $(t^{(n)})_{n=n_1,\ldots,n_2}$ between $s$ and $s'$,
the definition of $|\cdot|_{\star,\disc}$, H\"older's inequality and Estimate \eqref{est-deltavm}
give
\begin{multline}\label{added1}
\left|\int_\O \left(\Pi_\disc v_m(\x,s')-\Pi_{\disc}v_m(\x,s)\right)\Pi_\disc P_\disc\varphi_l(\x)\d\x
\right|\\
= \left|\int_{t^{(n_1)}}^{t^{(n_2+1)}}\int_\O \Pi_{\disc}\delta_\disc v(t)(\x)
\Pi_\disc P_\disc\varphi_l(\x)\d\x\d t\right|
\le C^{1/q} (t^{(n_2+1)}-t^{(n_1)})^{1/q'}||\nabla_\disc P_\disc\varphi_l||_{L^p(\O)^d}. 
\end{multline}
By definition of $P_\disc$, we have
\[
||\Pi_\disc P_\disc\varphi_l-\varphi_l||_{L^2(\O)}\le \widehat{S}_\disc(\varphi_l)
\]
and
\[
||\nabla_\disc P_\disc\varphi_l||_{L^p(\O)^d}\le \widehat{S}_\disc(\varphi_l)
 +||\nabla\varphi_l||_{L^p(\O)^d}\le C_{\varphi_l}
\]
with $C_{\varphi_l}$ not depending on $\disc$ (and therefore on $m$).
Since $t^{(n_2+1)}-t^{(n_1)}\le |s'-s|+\dt$ and $(\Pi_{\disc}v_m)_{m\in\N}$
is bounded in $L^\infty(0,T;L^2(\O))$, we deduce from \eqref{added1} that
\begin{multline*}
\left|\int_\O \left(\Pi_\disc v_m(\x,s')-\Pi_{\disc}v_m(\x,s)\right)\varphi_l(\x)\d\x
\right|\\
\le \left|\int_\O \left(\Pi_\disc v_m(\x,s')-\Pi_{\disc}v_m(\x,s)\right)\Pi_\disc P_\disc \varphi_l(\x)\d\x
\right|
+2||\Pi_\disc v_m||_{L^\infty(0,T;L^2(\O))}||\Pi_\disc P_\disc \varphi_l-\varphi_l||_{L^2(\O)}\\
\le  2C\widehat{S}_\disc(\varphi_l)+C^{1/q}C_{\varphi_l}|s'-s|^{1/q'}+C^{1/q}C_{\varphi_l}\dt^{1/q'}.
\end{multline*}
Plugged into the definition \eqref{def-distweak} of the distance in $E$, this shows that
\begin{eqnarray*}
d_E\Big(\lefteqn{\Pi_{\disc}v_m(s'),\Pi_{\disc}v_m(s)\Big)}&&\\
&\le& \sum_{l\in\N}\frac{\min(1,C^{1/q'}C_{\varphi_l}|s'-s|^{1/q'})}{2^l}
+\sum_{l\in\N}\frac{\min(1,2C\widehat{S}_{\disc_m}(\varphi_l)+C^{1/q'}C_{\varphi_l}\dt_m^{1/q'})}{2^l}\\
&=:&\omega(s,s')+\delta_m.
\end{eqnarray*}
Using the dominated convergence theorem for series, we see that $\omega(s,s')\to 0$ as $s-s'\to 0$
and that $\delta_m\to 0$ as $m\to\infty$ (we invoke the consistency to establish
that $\lim_{m\to\infty}\widehat{S}_{\disc_m}(\varphi_l)\to 0$ for any $l$).
Hence, the assumptions of Theorem \ref{genascoli} are satisfied and the proof
is complete. \end{proof}

\subsection{Technical results}

We state here a family of technical lemmas, starting with a few
properties on $\nu$ and $B$.

\begin{lemma} 
Under Assumptions \eqref{hypgnl} there holds
\begin{equation}\label{relchizeta}
 |\nu(a) - \nu(b)| \le L_\beta |\zeta(a) - \zeta(b)|,
\end{equation}
\begin{equation}\label{relchizetabeta}
 (\nu(a) - \nu(b))^2 \le L_\beta L_\zeta (\zeta(a) - \zeta(b)) (\beta(a) - \beta(b)).
\end{equation}
The function $B$ is convex continuous on $\overline{R_\beta}$,
the function $B\circ\beta:\R\to [0,\infty)$ is continuous,
\begin{equation}\label{relBbeta}
\forall s\in\R\,,\quad B(\beta(s))=\int_0^s \zeta(q)\beta'(q)\d q\,,
\end{equation}
\begin{equation}\label{growthB}
\exists K_0,K_1,K_2>0\mbox{ such that},\;
\forall s\in\R\,,\quad K_0 \beta(s)^2 - K_1\le B(\beta(s))\le K_2 s^2\,,
\end{equation}
\begin{equation}\label{sousdiffB}
\forall a\in\R\,,\;\forall S\in\overline{R_\beta}\,,\quad \zeta(a)(S-\beta(a))\le B(S) -B(\beta(a)),
\end{equation}
and
\be
\forall s,s'\in \R\,,\;
(\nu(s)-\nu(s'))^2\le 4 L_\beta L_\zeta  \left[B(\beta(s))+ B(\beta(s'))-2 B\left(\frac{\beta(s)+\beta(s')}{2}\right)\right].
\label{Bunifconv}\ee
\label{lembetaB}\end{lemma}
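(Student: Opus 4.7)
My plan is to address the eight sub-claims of the lemma in sequence, with the uniform-convexity-type bound \eqref{Bunifconv} being by far the most delicate step.

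For \eqref{relchizeta} and \eqref{relchizetabeta}, I would start from $\nu(a)-\nu(b)=\int_b^a \zeta'(q)\beta'(q)\,\d q$, which follows from \eqref{def:chi}. Taking $a\ge b$ without loss of generality, the Lipschitz bound $\beta'\le L_\beta$ gives \eqref{relchizeta} immediately. For \eqref{relchizetabeta}, applying Cauchy--Schwarz in the asymmetric form $(\int\sqrt{\beta'}\cdot\zeta'\sqrt{\beta'}\,\d q)^2\le (\int\beta'\,\d q)(\int\zeta'^2\beta'\,\d q)$ and then bounding $\zeta'^2\beta'\le L_\zeta\zeta'\beta'=L_\zeta\nu'$ yields $\nu(a)-\nu(b)\le L_\zeta(\beta(a)-\beta(b))$; multiplying this by \eqref{relchizeta} produces \eqref{relchizetabeta}.

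The properties of $B$ follow from the fact that $\zeta\circ\beta_r$ (which is $B'$ a.e.) is non-decreasing on $R_\beta\cap\R^+$ and non-positive on $R_\beta\cap\R^-$, making $B$ continuous and convex on $\overline{R_\beta}$. To prove \eqref{relBbeta} I would use the change of variable $t=\beta(q)$ in $\int_0^{\beta(s)}\zeta(\beta_r(t))\,\d t$; the key observation is that whenever $\beta'(q)>0$, $q$ is automatically the leftmost preimage of $\beta(q)$ (otherwise $\beta$ would be constant on a nontrivial interval containing $q$), so $\zeta(\beta_r(\beta(q)))=\zeta(q)$, while intervals where $\beta'=0$ contribute nothing to $\int_0^s\zeta(q)\beta'(q)\,\d q$. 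The upper bound in \eqref{growthB} is then immediate from $|\zeta(q)|\le L_\zeta|q|$. For the lower bound, combining the coercivity $|\zeta(s)|\ge M_0|s|-M_1$ with $|\beta_r(t)|\ge|t|/L_\beta$ (which follows from $t=\beta(\beta_r(t))$ and the Lipschitz bound on $\beta$) gives $\zeta(\beta_r(t))\cdot\mathrm{sign}(t)\ge (M_0/L_\beta)|t|-M_1$; integrating and using Young's inequality to absorb the linear term yields $K_0\beta(s)^2-K_1$. For \eqref{sousdiffB}, I would show $\zeta(a)\in\partial B(\beta(a))$: since $\beta^{-1}(\beta(a))$ is an interval $[q_0,q_1]\ni a$, the one-sided derivatives of $B$ at $\beta(a)$ are $\zeta(q_0)$ (left) and $\zeta(q_1)$ (right), so by monotonicity of $\zeta$ one has $\zeta(a)\in[\zeta(q_0),\zeta(q_1)]=\partial B(\beta(a))$, and the standard convex subgradient inequality applies.

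The hard part will be \eqref{Bunifconv}. Assume WLOG $s\ge s'$ and set $M=(\beta(s)+\beta(s'))/2$, $h=(\beta(s)-\beta(s'))/2$; pick $s_m\in[s',s]$ with $\beta(s_m)=M$. Writing $\zeta(q)=\zeta(s_m)\pm\int_{s_m}^q\zeta'(r)\,\d r$ in \eqref{relBbeta} and applying Fubini on $[s',s_m]$ and $[s_m,s]$ separately produces the identity
\[
B(\beta(s))+B(\beta(s'))-2B(M)=\int_{s'}^s\zeta'(r)\bigl(h-|\beta(r)-M|\bigr)\,\d r,
\]
while $\nu(s)-\nu(s')=\int_{s'}^s\zeta'(r)\beta'(r)\,\d r$. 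The change of variable $t=\beta(r)$ on $\{\beta'\ne 0\}$ (plateaux of $\beta$ contribute zero to the $\nu$ integral and non-negatively to the $B$ integral) gives $\nu(s)-\nu(s')=\int_{\beta(s')}^{\beta(s)}g(t)\,\d t$ and, using $1/\beta'\ge 1/L_\beta$,
\[
L_\beta\bigl[B(\beta(s))+B(\beta(s'))-2B(M)\bigr]\ge \int_{\beta(s')}^{\beta(s)}g(t)\bigl(h-|t-M|\bigr)\,\d t,
\]
where $g(t)=\zeta'(\beta^{-1}(t))\in[0,L_\zeta]$. It then suffices to show that for any measurable $g:[-h,h]\to[0,L_\zeta]$,
\[
\Bigl(\int_{-h}^h g(v)\,\d v\Bigr)^2\le 4L_\zeta\int_{-h}^h g(v)(h-|v|)\,\d v.
\]
I would establish this by layer-cake: writing $g(v)=\int_0^{L_\zeta}\mathbf 1_{g(v)>t}\,\d t$ and $A_t=\{g>t\}$, Cauchy--Schwarz in $t$ gives $(\int g)^2=(\int_0^{L_\zeta}|A_t|\,\d t)^2\le L_\zeta\int_0^{L_\zeta}|A_t|^2\,\d t$, and for each $t$ the elementary rearrangement bound $|A|^2\le 4\int_A(h-|v|)\,\d v$ (the minimum of $\int_A(h-|v|)\,\d v$ over $|A|=m$ is $m^2/4$, attained when $A$ concentrates near $\pm h$) followed by Fubini reconstructs $\int g(v)(h-|v|)\,\d v$ on the right, yielding \eqref{Bunifconv} with the stated constant.
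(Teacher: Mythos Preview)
Your proposal is correct, and for \eqref{relchizeta}--\eqref{sousdiffB} it is essentially equivalent to the paper's arguments (modulo cosmetic differences: you work with $B$ directly for the lower bound in \eqref{growthB} rather than with $B\circ\beta$, and you phrase \eqref{sousdiffB} via computing $\partial B(\beta(a))$ rather than via a direct integral manipulation). One small slip: for the convexity of $B$ you want $\zeta\circ\beta_r$ non-decreasing on all of $R_\beta$, not just on $R_\beta\cap\R^+$; what you wrote is not quite the right hypothesis for convexity, though the correct statement is immediate since both $\zeta$ and $\beta_r$ are non-decreasing.

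The substantive divergence is in \eqref{Bunifconv}, where your route---the Fubini identity, the change of variable $t=\beta(r)$, and then the layer-cake/rearrangement inequality $|A|^2\le 4\int_A(h-|v|)\d v$---is correct but considerably more elaborate than necessary. The paper's argument is a three-line computation: with $\bar s$ chosen so that $\beta(\bar s)=\tfrac12(\beta(s)+\beta(s'))$, one has
\[
B(\beta(s))+B(\beta(s'))-2B(\beta(\bar s))=\int_{\bar s}^s(\zeta(q)-\zeta(\bar s))\beta'(q)\,\d q+\int_{\bar s}^{s'}(\zeta(q)-\zeta(\bar s))\beta'(q)\,\d q,
\]
and then the two pointwise bounds $|\zeta(q)-\zeta(\bar s)|\ge L_\beta^{-1}|\nu(q)-\nu(\bar s)|$ (which is just \eqref{relchizeta}) and $\beta'\ge L_\zeta^{-1}\nu'$ reduce each integral to $\tfrac{1}{L_\beta L_\zeta}\int_{\bar s}^{\tilde s}(\nu(q)-\nu(\bar s))\nu'(q)\,\d q=\tfrac{1}{2L_\beta L_\zeta}(\nu(\tilde s)-\nu(\bar s))^2$. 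The elementary inequality $(\nu(s)-\nu(s'))^2\le 2(\nu(s)-\nu(\bar s))^2+2(\nu(s')-\nu(\bar s))^2$ finishes. This avoids the change of variable (which requires some care on plateaux of $\beta$), the layer-cake decomposition, and the rearrangement lemma entirely. Your approach does have the merit of producing an exact intermediate identity for the second difference of $B\circ\beta$, which might be of independent interest, but for the stated inequality the paper's bounding-by-$\nu$ trick is much more economical.
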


\begin{proof}

Inequality \eqref{relchizeta} is a straightforward consequence
of the estimate $\nu'=\zeta'\beta'\le L_\beta \zeta'$. Note that the
same inequality also holds with $\beta$ and $\zeta$ swapped.
Since these functions are non-decreasing, Inequality \eqref{relchizetabeta}
follows from \eqref{relchizeta} and the similar inequality
with $\beta$ and $\zeta$ swapped.

Since $\beta$ is non-decreasing, $\beta_r$ is also non-decreasing on $R_\beta$
and therefore locally bounded on $R_\beta$. Hence, $B$ is
locally Lipschitz-continuous on $R_\beta$, with an a.e. derivative $B'=\zeta(\beta_r)$.
$B'$ is therefore non-decreasing and $B$ is convex continuous on $R_\beta$, and thus also
on $\overline{R_\beta}$ by choice of its values at the endpoints of $R_\beta$.

To prove \eqref{relBbeta}, we denote by $P\subset R_\beta$ the countable set of plateaux values of $\beta$,
i.e. the $y\in\R$ such that $\beta^{-1}(\{y\})$ is not reduced to a singleton.
If $s\not\in \beta^{-1}(P)$ then $\beta^{-1}(\{\beta(s)\})$ is
the singleton $\{s\}$ and therefore $\beta_r(\beta(s))=s$. Moreover,
$\beta_r$ is continuous at $\beta(s)$ and thus $B$ is differentiable
at $\beta(s)$ with $B'(\beta(s))=\zeta(\beta_r(\beta(s)))=\zeta(s)$.
Since $\beta$ is differentiable a.e., we
deduce that, for a.e. $s\not\in \beta^{-1}(P)$, $(B(\beta))'(s)=B'(\beta(s))\beta'(s)
=\zeta(s)\beta'(s)$.
The set $\beta^{-1}(P)$ is a union of intervals on which $\beta$ and thus
$B(\beta)$ are locally constant; hence, for a.e. $s$ in this set,
$(B(\beta))'(s)=0$ and $\zeta(s)\beta'(s)=0$.
Hence, the locally Lipschitz-continuous functions $B(\beta)$ and $s\to \int_0^s \zeta(q)\beta'(q)\d q$
have identical derivatives a.e. on $\R$ and take the same value at $s=0$. They are thus
equal on $\R$ and the proof of \eqref{relBbeta} is complete. 

The continuity of $B\circ\beta$ is an obvious consequence of \eqref{relBbeta}.
The second inequality in \eqref{growthB} can also be easily deduced
from \eqref{relBbeta} by noticing that $|\zeta(s)\beta'(s)|\le L_\zeta L_\beta |s|$
(we can take $K_2=\frac{L_\zeta L_\beta}{2}$).
To prove the first inequality in \eqref{growthB}, we start by inferring from \eqref{hypzeta} the
existence of $S>0$ such that $|\zeta(q)|\ge \frac{M_0}{2}|q|\ge \frac{M_0}{2L_\beta}|\beta(q)|$
whenever $|q|\ge S$. We then write, for $s\ge S$,
\[
B(\beta(s))=\int_0^S \zeta(q)\beta'(q)\d q + \int_S^s \zeta(q)\beta'(q)\d q
\ge  \frac{M_0}{2L_\beta}\int_S^s \beta(q)\beta'(q)\d q
= \frac{M_0}{4L_\beta}\left(\beta(s)^2-\beta(S)^2\right).
\]
A similar inequality holds for $s\le -S$ (with $\beta(-S)$ instead of $\beta(S)$)
and the first inequality in \eqref{growthB}
therefore holds with $K_0=\frac{M_0}{4L_\beta}$ and $K_1=\frac{M_0}{4L_\beta}\max_{[-S,S]}\beta^2$.

We now prove \eqref{sousdiffB}, which states that $\zeta(a)$ belongs to the convex
sub-differential of $B$ at $\beta(a)$. We first start with the case $S\in R_\beta$,
that is $S=\beta(b)$ for some $b\in\R$.
If $\beta_r$ is continuous at $\beta(a)$ then this inequality is an obvious consequence
of the convexity of $B$ since $B$ is then differentiable at $\beta(a)$ with $B'(\beta(a))=\zeta(\beta_r(\beta(a)))=\zeta(a)$.
Otherwise, a plain reasoning also does the job:
\begin{multline*}
B(S)-B(\beta(a))=B(\beta(b))-B(\beta(a))\\
=\int_{a}^{b}\zeta(q)\beta'(q)\d q
=\int_{a}^{b}(\zeta(q)-\zeta(a))\beta'(q)\d q + \zeta(a)(\beta(b)-\beta(a))
\ge \zeta(a)(S-\beta(a)),
\end{multline*}
the inequality coming from the fact that $\beta'\ge 0$ and that $\zeta(q)-\zeta(a)$ has the same sign as $b-a$ when
$q$ is between $a$ and $b$. The general case $S\in \overline{R_\beta}$ is obtained by passing
to the limit on $b_n$ such that $\beta(b_n)\to S$ and by using the fact that $B$ has limits
(possibly $+\infty$) at the endpoints of $R_\beta$.

Let us now take $s,s'\in \R$. Let $\bar s\in\R$ be such that $\beta(\bar s) = \frac{\beta(s)+\beta(s')} 2$. We notice that
\begin{equation}\label{estBunifconv:1}
B(\beta(s))+ B(\beta(s')) -2 B(\beta(\bar s)) =
\int_{\bar s}^s (\zeta(q) - \zeta(\bar s)) \beta'(q)\d q + \int_{\bar s}^{s'} (\zeta(q) - \zeta(\bar s)) \beta'(q)\d q.
\end{equation}
We then notice that $|\zeta(q) - \zeta(\bar s)| \ge \frac 1 {L_\beta} |\nu(q) - \nu(\bar s)|$
and $\beta'(q)\ge \beta'(q)\frac{\zeta'(q)}{L_\zeta}=\frac{\nu'(q)}{L_\zeta}$.
If $\widetilde{s}=s$ or $s'$, since $\zeta(q)-\zeta(\bar s)$ has the same sign as
$\widetilde{s}-\bar s$ for all $q$ between $\bar s$ and $\widetilde{s}$, we can write
\begin{equation}\label{estBunifconv:2}
 \int_{\bar s}^{\widetilde{s}} (\zeta(q) - \zeta(\bar s)) \beta'(q)\d q \ge  \frac 1 {L_\beta L_\zeta}\int_{\bar s}^{\widetilde{s}} \nu'(q) (\nu(q) - \nu(\bar s))\d q = 
\frac 1 { 2 L_\beta L_\zeta} (\nu(\widetilde{s}) - \nu(\bar s))^2.
\end{equation}
Estimate \eqref{Bunifconv} follows from \eqref{estBunifconv:1}, \eqref{estBunifconv:2} and
the inequality $(\nu(s) - \nu(s'))^2 \le 2(\nu(s) - \nu(\bar s))^2 + 2 (\nu(s') - \nu(\bar s))^2$.
\end{proof}

The next lemma is an easy consequence of Fatou's lemma and the
fact that strongly lower semi-continuous convex functions are also
weakly lower semi-continuous.  We all the same provide its short proof.

\begin{lemma}\label{lem:Zconv}
Let $I$ be a closed interval of $\R$ and let $H:I\to (-\infty,\infty]$ be a convex continuous function
(continuity for possible infinite values, at the endpoints of $I$, corresponding to $H$ having
limits at these endpoints).
We denote by $L^2(\Omega;I)$ the convex set of functions in $L^2(\O)$
with values in $I$. Let $v\in L^2(\O;I)$ and let $(v_m)_{m\in\N}$ be a sequence of functions in $L^2(\O;I)$
that converges weakly to $v$ in $L^2(\O)$. Then 
\[
\int_\O H(v(\x))\d\x\le \liminf_{m\to\infty} \int_\O H(v_m(\x))\d\x.
\]
\end{lemma}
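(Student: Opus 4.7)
The plan is to establish the lemma in two stages, matching the hint given by the authors: first show that the functional $J(w):=\int_\O H(w(\x))\d\x$ is strongly lower semicontinuous on $L^2(\O;I)$ via Fatou's lemma, then upgrade to weak lower semicontinuity using the convexity of $J$ together with Mazur's lemma.

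As a preliminary, I would record that every convex function on a closed interval admits an affine minorant: there exist $a,b\in\R$ with $H(s)\ge as+b$ for all $s\in I$ (take a subgradient of $H$ at an interior point of $I$, where $H$ is finite). Since $\O$ is bounded and $v,v_m\in L^2(\O)\subset L^1(\O)$, this ensures that $\int_\O H(v_m)\d\x$ and $\int_\O H(v)\d\x$ are well-defined in $(-\infty,+\infty]$ and, more importantly, that the functions $H(v_m)-av_m-b$ are non-negative integrands to which Fatou's lemma may be applied directly.

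For the strong lower semicontinuity step, suppose $w_n\to w$ strongly in $L^2(\O;I)$. Any subsequence admits a further subsequence $(w_{n_k})$ converging a.e.\ to $w$, and the continuity of $H$ on $I$ (with limits in $(-\infty,+\infty]$ at the endpoints) gives $H(w_{n_k})\to H(w)$ a.e. Fatou's lemma applied to the non-negative integrands $H(w_{n_k})-aw_{n_k}-b$, combined with the strong $L^1$-convergence of the affine part, yields $J(w)\le \liminf_k J(w_{n_k})$. A standard subsequence-of-subsequence principle then gives $J(w)\le \liminf_n J(w_n)$ for the original sequence.

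To pass from strong to weak lower semicontinuity, set $\ell := \liminf_m J(v_m)$; I may assume $\ell<+\infty$ (otherwise there is nothing to prove) and extract a subsequence, still denoted $(v_m)$, along which $J(v_m)\to\ell$. By Mazur's lemma, there exist convex combinations $\widetilde v_m = \sum_{k=m}^{N(m)}\lambda_k^m v_k$ (with $\lambda_k^m\ge 0$ and $\sum_k \lambda_k^m = 1$) converging strongly to $v$ in $L^2(\O)$; since $I$ is convex, each $\widetilde v_m$ still takes its values in $I$, so $\widetilde v_m \in L^2(\O;I)$. The pointwise convexity of $H$ yields $J(\widetilde v_m)\le \sum_k \lambda_k^m J(v_k)\le \sup_{k\ge m} J(v_k)$, and the right-hand side tends to $\ell$. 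Combined with the strong lower semicontinuity of $J$ already established, this gives $J(v)\le \liminf_m J(\widetilde v_m)\le \ell$, which concludes the proof. The only real obstacle is the bookkeeping around possibly infinite endpoint values of $H$, but this is resolved uniformly by the affine lower bound, which makes both the Fatou step and the Mazur combination step insensitive to any blow-up of $H$ at the boundary of $I$.
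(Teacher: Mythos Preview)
Your proof is correct and follows essentially the same route as the paper: an affine minorant to make $\Phi(w)=\int_\O H(w)$ well-defined in $(-\infty,\infty]$, strong lower semicontinuity via Fatou's lemma along an a.e.-convergent subsequence, and then the passage from strong to weak lower semicontinuity by convexity. The only cosmetic difference is that where the paper invokes the classical fact (citing \cite{ekeland-temam}) that a convex strongly lsc functional is weakly lsc, you unpack this step explicitly via Mazur's lemma---which is precisely how that fact is proved.
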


\begin{proof}

For $w\in L^2(\O;I)$ we set $\Phi(w)=\int_\O H(w(\x))\d\x$.
Since $H$ is convex, it is greater than a linear functional and $\Phi(w)$
is thus well defined in $(-\infty,\infty]$. Moreover, if $w_k\to w$ strongly
in $L^2(\Omega;I)$ then, up to a subsequence, $w_k\to w$ a.e. on $\O$
and therefore $H(w_k)\to H(w)$ a.e. on $\O$. Thanks to the
linear lower bound of $H$, we can apply Fatou's lemma to see that
$\Phi(w)\le \liminf_{k\to\infty}\Phi(w_k)$.

Hence, $\Phi$ is lower semi-continuous for the strong topology of $L^2(\O;I)$.
Since $\Phi$ (like $H$) is convex, we deduce that this lower semi-continuity property
is also valid for the weak topology of $L^2(\O;I)$, see \cite{ekeland-temam}.
The result of the lemma is just the translation of this weak lower semi-continuity of $\Phi$.
\end{proof}

The last technical result is a consequence of the Minty trick. It has been
proved and used in the $L^2$ case in \cite{eym-12-stef,koala}, but we need here an
extension to the non-Hilbertian case.

\begin{lemma}[Minty's trick]\label{lem-minty}
Let $H \in C^0(\R)$ be a nondecreasing function. Let $(X,\mu)$ be a measurable
set with finite measure and let $(u_n)_{n\in\N} \subset L^p(X)$, with $p>1$, satisfy
\begin{enumerate}
\item there exists  $u \in L^p(X)$ such that $(u_n)_{n\in\N}$
converges weakly to $u$ in $L^p(X)$;
\item $(H(u_n))_{n\in\N} \subset L^1(X)$ and there exists $w\in L^1(X)$ such that
$(H(u_n))_{n\in\N}$ converges strongly to $w$ in $L^1(X)$;
\end{enumerate}
Then $w = H(u)$ a.e. on $X$.
\end{lemma}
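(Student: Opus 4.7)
The plan is to apply Minty's monotonicity trick, but only after first truncating $H$ to avoid an integrability obstruction in the cross terms. For $M>0$, let $T_M(s)=\max(-M,\min(M,s))$ and set $H_M:=T_M\circ H$, which is still continuous, non-decreasing, and now bounded by $M$. Since $T_M$ is $1$-Lipschitz, $|H_M(u_n)-T_M(w)|\le |H(u_n)-w|$, so $H_M(u_n)\to T_M(w)$ in $L^1(X)$; combined with the uniform bound $|H_M(u_n)-T_M(w)|\le 2M$ and the finiteness of $\mu$, the inequality
\[
|H_M(u_n)-T_M(w)|^{p'}\le (2M)^{p'-1}\,|H_M(u_n)-T_M(w)|
\]
upgrades this to strong convergence in $L^{p'}(X)$.

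With that upgrade I would run Minty for $H_M$: pointwise monotonicity gives, for every $g\in L^p(X)$,
\[
\int_X (H_M(u_n)-H_M(g))(u_n-g)\,d\mu \ge 0,
\]
and the four expanded terms are all $L^{p'}$--$L^p$ pairings, hence pass to the limit through the weak convergence of $u_n$ in $L^p(X)$ against the strong convergence of $H_M(u_n)$ in $L^{p'}(X)$. The resulting inequality
\[
\int_X (T_M(w)-H_M(g))(u-g)\,d\mu \ge 0, \qquad \forall g\in L^p(X),
\]
specialised to $g=u+\lambda\varphi$ with $\varphi\in L^p(X)$, divided by $|\lambda|$, and taken in the limit $\lambda\to 0^\pm$ (justified by continuity and boundedness of $H_M$ with dominated convergence against the integrable majorant $2M|\varphi|$), yields $\int_X (T_M(w)-H_M(u))\varphi\,d\mu=0$ for every $\varphi\in L^p(X)$, whence $T_M(w)=H_M(u)=T_M(H(u))$ almost everywhere. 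Letting $M\to\infty$, both sides converge pointwise a.e.\ to $w$ and to $H(u)$ respectively (both functions being a.e.\ finite), giving the desired identity.

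The only real obstacle is the one the truncation bypasses: applied directly to $H$, Minty's expansion contains the term $\int_X H(u_n)u_n\,d\mu$, which need not even be finite (since $H(u_n)\in L^1$ and $u_n\in L^p$ do not pair into $L^1$) and cannot be passed to the limit, because the strong $L^1$ convergence of $H(u_n)$ is insufficient against the only-weak $L^p$ convergence of $u_n$. Composing with $T_M$ places $H_M(u_n)$ in $L^\infty\subset L^{p'}(X)$ and restores the weak--strong duality; the truncation is then removed at the end by a trivial pointwise limit.
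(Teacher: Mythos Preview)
Your proof is correct and follows essentially the same route as the paper's: truncate to make the nonlinearity bounded, upgrade the $L^1$ convergence to $L^{p'}$, run the monotonicity inequality and pass to the limit via the $L^{p'}$--$L^p$ duality, apply Minty's trick $g=u+\lambda\varphi$, and remove the truncation at the end. The only cosmetic difference is that you truncate the \emph{output} of $H$ via $H_M=T_M\circ H$, whereas the paper truncates the \emph{input} via $H\circ T_k$ and then observes that, by monotonicity of $H$, this amounts to an (asymmetric) output truncation $H(T_k(s))=T_{h_k,m_k}(H(s))$; your version is arguably slightly cleaner since it bypasses that observation.
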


\begin{proof}

For $k,l>0$ we define the truncation at levels $-l$ and $k$ by $T_{k,l}(s)=\max(-l,\min(s,k))$
and we let $T_k=T_{k,k}$.
Since $H$ is non-decreasing, there exists sequences $(h_k)_{k\in\N}$ and $(m_k)_{k\in\N}$
that tend to $+\infty$ as $k\to\infty$ and such that $H(T_k(s))=T_{h_k,m_k}(H(s))$.
Thus, $H(T_k(u_n))\to T_{h_k,m_k}(w)$ in $L^1(X)$ as $n\to\infty$. Given that
$(H(T_k(u_n)))_{n\in\N}$ remains bounded in $L^\infty(X)$, its convergence
to $T_{h_k,m_k}(w)$ also holds in $L^{p'}(X)$.

Using fact that $H\circ T_k$ is non-decreasing, we write for any $g\in L^{p}(X)$
\[
\int_X (H(T_k(u_n))-H(T_k(g)))(u_n-g)\d\mu\ge 0.
\]
By strong convergence of $H(T_k(u_n))$ in $L^{p'}(X)$
and weak convergence of $u_n$ in $L^{p}(X)$, as well as the fact
that $H\circ T_k$ is bounded, we can take the
limit of this expression as $n\to \infty$ and we find
\be\label{minty1}
\int_X (T_{h_k,m_k}(w)-H(T_k(g)))(u-g)\d\mu\ge 0.
\ee
We then use Minty's trick. We pick a generic $\varphi\in L^{p}(X)$,
apply \eqref{minty1} to $g=u-t\varphi$, divide by $t$ and let $t\to \pm 0$
(using the dominated convergence theorem and the fact that $H\circ T_k$ is
continuous and bounded) to find
\[
\int_X (T_{h_k,m_k}(w)-H(T_k(u)))\varphi\d\mu = 0.
\]
Selecting $\varphi={\rm sign}(T_{h_k,m_k}(w)-H(T_k(u)))$, we deduce
that $T_{h_k,m_k}(w)=H(T_k(u))$ a.e. on $X$. Letting $k\to \infty$,
we conclude that $w=H(u)$ a.e. on $X$. \end{proof}

\subsection{Integration-by-parts for the continuous solution}

The last series of preliminary results are properties on the solution to
\eqref{ellgenfllt}, all based on the following integration-by-parts property.
This property, used in the proof of Theorems \ref{th:weakconv} and \ref{th:uniftime},
enables us to compute the value of the linear form
$\partial_t \beta(\bu)\in L^{p'}(0,T;W^{-1,p'}(\O))$ on the function $\zeta(\bu)
\in L^p(0,T;W^{1,p}_0(\O))$.
Because of the lack of regularity on $\bu$ and the double non-linearity ($\beta$
and $\zeta$), justifying this integration-by-parts is however not straightforward at all...

\begin{lemma}\label{ricipp}
Let us assume \eqref{hypzeta} and \eqref{hypbeta}.
Let $v:\O\times (0,T)\mapsto \R$ be measurable such that $\zeta(v)\in L^p(0,T;W^{1,p}_0(\O))$,
$B(\beta(v)) \in L^\infty(0,T;L^1(\O))$,
$\beta(v)\in C([0,T];L^2(\O)\weak)$ and $\partial_t \beta(v)\in L^{p'}(0,T;W^{-1,p'}(\O))$.
Then $t\in [0,T]\to \int_\O B(\beta(v)(\x,t))\d\x\in [0,\infty)$ is
continuous and, for all $t_1,t_2\in [0,T]$,
\begin{equation}\label{ippbeta}
\int_{t_1}^{t_2}\langle \partial_t \beta(v)(t),\zeta(v(\cdot,t))\rangle_{W^{-1,p'},W^{1,p}_0}\d t
=\int_\O B(\beta(v)(\x,t_2))\d\x-\int_\O B(\beta(v)(\x,t_1)) \d\x.
\end{equation}
\end{lemma}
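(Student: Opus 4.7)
This is a chain-rule / integration-by-parts identity stating that $f(t) := \int_\O B(\beta(v))(\x, t)\d\x$ is an antiderivative in time of $t \mapsto \langle \partial_t \beta(v)(t), \zeta(v)(t)\rangle$. Formally, since Lemma \ref{lembetaB} gives $(B\circ\beta)' = \zeta\beta'$, one expects $\partial_t B(\beta(v)) = \zeta(v)\partial_t \beta(v)$ and \eqref{ippbeta} follows by integrating in $\O\times (t_1, t_2)$. The difficulty is that $v$ itself carries no regularity, so the chain rule must be justified using only the monotonicity of $\beta, \zeta$ and the Sobolev regularity of $\beta(v), \zeta(v)$. The plan is an Alt--Luckhaus-type Steklov averaging argument built on the sub-differential inequality \eqref{sousdiffB}.

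Setting $u := \beta(v)$ and $w := \zeta(v)$, for $h > 0$ I apply \eqref{sousdiffB} twice pointwise -- first with $a = v(\x, t),\ S = u(\x, t+h)$, then with $a = v(\x, t+h),\ S = u(\x, t)$ -- to obtain the sandwich
\[
w(\x, t)\,[u(\x, t+h) - u(\x, t)] \;\le\; B(u)(\x, t+h) - B(u)(\x, t) \;\le\; w(\x, t+h)\,[u(\x, t+h) - u(\x, t)].
\]
Integrating over $\O \times [t_1, t_2 - h]$ and dividing by $h$, the middle term telescopes to $\tfrac{1}{h}\int_{t_2-h}^{t_2} f - \tfrac{1}{h}\int_{t_1}^{t_1+h} f$, while the outer terms become pairings of the forward difference quotient $D_h u := [u(\cdot + h) - u(\cdot)]/h$ against $w$ and its $h$-translate. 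Using the $L^p$-continuity of time translations for $w \in L^p(0, T; W^{1,p}_0(\O))$ together with $\partial_t u \in L^{p'}(0, T; W^{-1,p'}(\O))$ (via a standard Bochner-space mollification argument), both outer bounds converge as $h \to 0^+$ to $\int_{t_1}^{t_2}\langle \partial_t u(s), w(s)\rangle_{W^{-1,p'}, W^{1,p}_0}\d s$. Since $f \in L^\infty(0, T)$, Lebesgue's differentiation theorem identifies the middle term, at a.e. endpoint, with $f(t_2) - f(t_1)$; this establishes \eqref{ippbeta} for almost every $(t_1, t_2)$.

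To extend the formula to every pair of endpoints and obtain the asserted continuity of $f$, I fix a Lebesgue point $t_0$ of $f$ and set $g(t) := f(t_0) + \int_{t_0}^t \langle \partial_t u, w\rangle \d s$, a continuous function which coincides with $f$ a.e. Lemma \ref{lem:Zconv} applied to the convex $B$, combined with the weak continuity $u \in C([0,T]; L^2(\O)\weak)$, shows that $f$ is lower semi-continuous on $[0, T]$, whence $f \le g$ everywhere. The reverse inequality is extracted from the Steklov sandwich itself: the telescoped limit $\lim_{h\to 0}\bigl(\tfrac{1}{h}\int_{t_2-h}^{t_2} f - \tfrac{1}{h}\int_{t_1}^{t_1+h} f\bigr) = g(t_2) - g(t_1)$ in fact holds for every $t_1, t_2 \in [0, T]$, and pairing it with the one-sided lsc inequalities $\liminf_{h\to 0^+} \tfrac{1}{h}\int_t^{t+h} f \ge f(t)$ at arbitrary points forces $f = g$ pointwise. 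The main obstacle is precisely this last step -- promoting the a.e.-in-time identity to a pointwise identity on the closed interval, which is essential so that \eqref{ippbeta} can be invoked at endpoints such as $t_1 = 0$, where the initial datum $\beta(u_{\rm ini})$ enters. The earlier steps are routine applications of monotonicity, Lebesgue differentiation, and Bochner-space duality.
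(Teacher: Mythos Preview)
Your overall strategy---the Alt--Luckhaus Steklov sandwich built on the subdifferential inequality \eqref{sousdiffB}---is the right one and parallels the paper's proof. However, two steps are not justified as written.

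\textbf{Integrability and identification with the duality pairing.} When you integrate the pointwise sandwich $w(t)[u(t+h)-u(t)] \le B(u)(t+h)-B(u)(t) \le w(t+h)[u(t+h)-u(t)]$ over $\O$, you need the outer terms to be in $L^1(\O)$ \emph{and} to coincide with the pairing $\langle u(t+h)-u(t),\,w(t)\rangle_{W^{-1,p'},W^{1,p}_0}$. Under the stated hypotheses we only know $w(\cdot,t)=\zeta(v(\cdot,t))\in W^{1,p}_0(\O)\subset L^p(\O)$ and $u(\cdot,t)\in L^2(\O)$; when $p<2$ there is no reason for $w(\cdot,t)\in L^2(\O)$, so neither the Lebesgue integral nor its identification with the duality bracket is automatic. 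The paper handles exactly this point by truncating $v$ to $T_k(v)$, so that $\zeta(T_k(v(\cdot,t)))\in L^\infty(\O)\cap W^{1,p}_0(\O)$, applying \eqref{sousdiffB} with $a=T_k(v(\x,t))$, and then checking that the remainder $[\beta(v)-\beta(T_k(v))]\zeta(T_k(v))$ has a sign that lets one pass to the limit $k\to\infty$ via Fatou.

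\textbf{The endpoint extension.} This is the real gap. From lower semicontinuity of $f$ (via Lemma \ref{lem:Zconv} and the weak continuity of $u$) you correctly deduce $f\le g$ everywhere. But your ``reverse inequality'' does not follow: once you know $f=g$ a.e.\ with $g$ continuous, the telescoped middle term $\tfrac{1}{h}\int_{t_2-h}^{t_2}f-\tfrac{1}{h}\int_{t_1}^{t_1+h}f$ converges to $g(t_2)-g(t_1)$ \emph{for every} $t_1,t_2$, so the limiting sandwich collapses to the tautology $g(t_2)-g(t_1)=g(t_2)-g(t_1)$ and carries no information about $f(t_1)$ or $f(t_2)$. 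Likewise, the one-sided estimate $\liminf_{h\to 0^+}\tfrac{1}{h}\int_t^{t+h}f\ge f(t)$ (valid by lsc) combined with $\tfrac{1}{h}\int_t^{t+h}f\to g(t)$ again yields only $f(t)\le g(t)$. Nothing here gives $f\ge g$. The paper breaks this symmetry by first extending $\beta(v)$ to be constant on $[t_2,\infty)$, so that the boundary term at $t_2$ is \emph{exactly} $f(t_2)$ (not an average), while the term at $t_1$ is handled by Jensen and weak-lsc; this yields the one-sided inequality $\int_{t_1}^{t_2}\langle\partial_t u,w\rangle\le f(t_2)-f(t_1)$ for \emph{all} $t_1,t_2$. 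The opposite inequality is then obtained by applying the same argument to the time-reversed function $\widetilde v(s)=v(t_1+t_2-s)$. Without an analogous device (constant extension plus time reversal, or an independent proof that $f$ is also upper semicontinuous), your argument establishes \eqref{ippbeta} only for almost every pair $(t_1,t_2)$, which is not enough for the later application at $t_1=0$.
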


\begin{remark} Similarly to the discussion at the end of Section \ref{sec:contprob},
we notice that it is important to keep in mind the separation between $\beta(v(\cdot,\cdot))$
and its continuous representative $\beta(v)(\cdot,\cdot)$.
\end{remark}

\begin{proof}

Without loss of generality, we assume that $0\le t_1<t_2\le T$.

\textbf{Step 1}: truncation, extension and approximation of $\beta(v)$.

We define $\overline{\beta(v)}:\R\to L^2(\O)$ by setting
\[
\overline{\beta(v)}(t)=\left\{\begin{array}{ll}
\beta(v)(t)&\mbox{ if $t\in [t_1,t_2]$},\\
\beta(v)(t_1)&\mbox{ if $t\le t_1$},\\
\beta(v)(t_2)&\mbox{ if $t\ge t_2$}.
\end{array}\right.
\]
By the continuity property of $\beta(v)$, this definition makes sense and gives
$\overline{\beta(v)}\in C(\R;L^2(\O)\weak)$ such that $\partial_t \overline{\beta(v)}
=\mathbf{1}_{(t_1,t_2)}\partial_t \beta(v)
\in L^{p'}(\R;W^{-1,p'}(\O))$ where $\mathbf{1}$ is the characteristic function
(no Dirac masses have been introduced at $t=t_1$ or $t=t_2$).
This regularity of $\partial_t\overline{\beta(v)}$ ensures that the function
$D_h\overline{\beta(v)}:\R\mapsto W^{-1,p'}(\O)$ defined by
\begin{equation}
\forall t\in\R\,,\;D_h\overline{\beta(v)}(t)=\frac{1}{h}\int_{t}^{t+h} \partial_t\overline{\beta(v)}(s)\d s=\frac{\overline{\beta(v)}(t+h)-\overline{\beta(v)}(t)}{h}
\label{defDh}\end{equation}
tends to $\partial_t \overline{\beta(v)}$ in $L^{p'}(\R;W^{-1,p'}(\O))$ as $h\to 0$.

\medskip

\textbf{Step 2}: we prove that $||B(\overline{\beta(v)}(t))||_{L^1(\O)}\le 
||B(\beta(v))||_{L^\infty(0,T;L^1(\O))}$ for \emph{all} $t\in \R$
(not only for a.e. $t$).

Let $t\in [t_1,t_2]$. Since $\beta(v)(\cdot,\cdot)=\beta(v(\cdot,\cdot))$ a.e. on $\O\times (t_1,t_2)$,
there exists a sequence $t_n\to t$ such that $\beta(v)(\cdot,t_n)=\beta(v(\cdot,t_n))$ in $L^2(\O)$
and $||B(\beta(v)(\cdot,t_n))||_{L^1(\O)}\le ||B(\beta(v))||_{L^\infty(0,T;L^1(\O))}$
for all $n$.
As $\beta(v)\in C([0,T];L^2(\O)\weak)$, we have $\beta(v)(\cdot,t_n)\to \beta(v)(\cdot,t)$
weakly in $L^2(\O)$.
We then use the convexity of $B$ and Lemma \ref{lem:Zconv}
to write, thanks to our choice of $t_n$,
\[
\int_\O B(\beta(v)(\x,t))\d\x\le\liminf_{n\to\infty}\int_\O B(\beta(v)(\x,t_n))\d\x
\le ||B(\beta(v))||_{L^\infty(0,T;L^1(\O))}
\]
and the proof is complete for $t\in [t_1,t_2]$. The result for $t\le t_1$ or
$t\ge t_2$ is obvious since $\overline{\beta(v)}(t)$ is then either $\beta(v)(t_1)$
or $\beta(v)(t_2)$.

\medskip

\textbf{Step 3}: We prove that for all $\tau\in\R$ and a.e. $t\in (t_1,t_2)$,
\be\label{dualprod}
\langle \overline{\beta(v)}(\tau)-\beta(v)(t),\zeta(v(\cdot,t))\rangle_{W^{-1,p'},W^{1,p}_0}
\le \int_\O B(\overline{\beta(v)}(\x,\tau))-B(\beta(v)(\x,t))\d\x.
\ee
If we could just replace the duality
product $W^{-1,p'}$--$W^{1,p}_0$ with an $L^2$ inner product, this formula
would be a straightforward consequence of \eqref{sousdiffB}. The problem is that
nothing ensures that $\zeta(v)(t)\in L^2(\O)$ for a.e. $t$.

We first notice that $\overline{\beta(v)}(\tau)-\beta(v)(t)
=\int_t^{\tau}\partial_t \overline{\beta(v)}(s)\d s$ belongs to
$W^{-1,p'}(\O)$ so the left-hand side of \eqref{dualprod} makes sense
provided that $t$ is chosen such that $\zeta(v(\cdot,t))\in W^{1,p}_0(\O)$
(which we do from here on).
To deal with the fact that $\zeta(v(\cdot,t))$ does not necessarily belong
to $L^2(\O)$, we replace it with a truncation. As in the proof of Lemma \ref{lem-minty},
we introduce $T_{k,l}(s)=  \max(-l,\min(s,k))$ and we let $T_k=T_{k,k}$.
By the monotony assumption \eqref{hypzeta} on $\zeta$ we see that
there exists sequences $(r_k)_{k\in\N}$ and $(l_k)_{k\in\N}$
that tend to $+\infty$ as $k\to+\infty$ and such that
$\zeta(T_k(v(\cdot,t)))=T_{r_k,l_k}(\zeta(v(\cdot,t)))$. Hence, $\zeta(T_k(v(\cdot,t)))\in W^{1,p}_0(\O)$
and converges, as $k\to \infty$, to $\zeta(v(\cdot,t))$ in $W^{1,p}_0(\O)$.

We can therefore write
\begin{multline}\label{dualprod1}
\langle \overline{\beta(v)}(\tau)-\beta(v)(t),\zeta(v(\cdot,t)) \rangle_{W^{-1,p'},W^{1,p}_0}
=\lim_{k\to\infty}
\langle \overline{\beta(v)}(\tau)-\beta(v)(t),\zeta(T_k(v(\cdot,t))) \rangle_{W^{-1,p'},W^{1,p}_0}\\
=\lim_{k\to\infty}
\int_\O \left[\overline{\beta(v)}(\x,\tau)-\beta(v(\x,t))\right]\zeta(T_k(v(\x,t)))\d\x,
\end{multline}
the replacement of the duality product by an $L^2(\O)$ inner product being
justified since $\overline{\beta(v)}(\tau)-\beta(v)(t)$ and $\zeta(T_k(v(\cdot,t)))$ both belong to
$L^2(\O)$. We also used that,
for a.e. $t\in (t_1,t_2)$, $\beta(v)(\cdot,t)=\beta(v(\cdot,t))$ a.e. on
$\O$; hence \eqref{dualprod1} is valid for a.e. $t\in (t_1,t_2)$.

We then write $\beta(v(\x,t))=\beta(T_k(v(\x,t)))+\left[\beta(v(\x,t))-\beta(T_k(v(\x,t)))\right]$
and apply \eqref{sousdiffB} with $S=\overline{\beta(v)}(\x,\tau)$ and $a=T_k(v(\x,t))$
to find
\begin{multline*}
\int_\O \left[\overline{\beta(v)}(\x,\tau)-\beta(v(\x,t))\right]\zeta(T_k(v(\x,t)))\d\x\\
=\int_\O \left[\overline{\beta(v)}(\x,\tau)-\beta(T_k(v(\x,t)))\right]\zeta(T_k(v(\x,t)))\d\x\\
-\int_\O \left[\beta(v(\x,t))-\beta(T_k(v(\x,t)))\right]\zeta(T_k(v(\x,t)))\d\x\\
\le \int_\O B(\overline{\beta(v)}(\x,\tau))-B(\beta(T_k(v(\x,t))))\d\x
-\int_\O \left[\beta(v(\x,t))-\beta(T_k(v(\x,t)))\right]\zeta(T_k(v(\x,t)))\d\x.
\end{multline*}
By the monotony of $\beta$, the sign of $\zeta$ and by
studying the cases $v(\x,t)\ge k$, $-k\le v(\x,t)\le k$ and $v(\x,t)\le -k$, we notice that the last
integrand is everywhere non-negative. We can therefore write
\[
\int_\O \left[\overline{\beta(v)}(\x,\tau)-\beta(v(\x,t))\right]\zeta(T_k(v(\x,t)))\d\x
\le \int_\O B(\overline{\beta(v)}(\x,\tau))-B(\beta(T_k(v(\x,t))))\d\x.
\]
We then use the continuity of $B\circ \beta$ and Fatou's lemma to deduce
\begin{multline*}
\limsup_{k\to\infty}
\int_\O \left[\overline{\beta(v)}(\x,\tau)-\beta(v(\x,t))\right]\zeta(T_k(v(\x,t)))\d\x\\
\le \int_\O B(\overline{\beta(v)}(\x,\tau))\d\x-\liminf_{k\to\infty}\int_\O B(\beta(T_k(v(\x,t))))\d\x\\
\le \int_\O B(\overline{\beta(v)}(\x,\tau))\d\x-\int_\O B(\beta(v(\x,t)))\d\x
\end{multline*}
which, combined with \eqref{dualprod1}, concludes the proof of \eqref{dualprod}
(recall that $t$ has been chosen such that $\beta(v(\cdot,t))=\beta(v)(\cdot,t)$
a.e. on $\O$).

\medskip

\textbf{Step 4}: proof of the formula

Since $\mathbf{1}_{(t_1,t_2)}\zeta(v) \in L^p(\R;W^{1,p}_0(\O))$
and $D_h\overline{\beta(v)}\to\partial_t\overline{\beta(v)}$
in $L^{p'}(\R;W^{-1,p'}(\O))$ as $h\to 0$, we have
\begin{multline}\label{debbeta}
\int_{t_1}^{t_2}\langle \partial_t \beta(v)(t),\zeta(v(\cdot,t))\rangle_{W^{-1,p'},W^{1,p}_0}\d t
=\int_\R\langle \partial_t \overline{\beta(v)}(t),\mathbf{1}_{(t_1,t_2)}(t)\zeta(v(\cdot,t))\rangle_{W^{-1,p'},W^{1,p}_0}\d t\\
=\lim_{h\to 0}\int_\R \langle D_h\overline{\beta(v)}(t),\mathbf{1}_{(t_1,t_2)}(t)\zeta(v(\cdot,t))\rangle_{W^{-1,p'},W^{1,p}_0}\d t\\
=
\lim_{h\to 0}\frac{1}{h}\int_{t_1}^{t_2} \langle \overline{\beta(v)}(t+h)-\overline{\beta(v)}(t),\zeta(v(\cdot,t)\rangle_{W^{-1,p'},W^{1,p}_0}\d t.
\end{multline}
We then use \eqref{dualprod} for a.e. $t\in (t_1,t_2)$ to obtain, for $h$ small enough
such that $t_1+h<t_2$,
\begin{eqnarray}
\frac{1}{h}\int_{t_1}^{t_2} \lefteqn{\langle \overline{\beta(v)}(t+h)-\overline{\beta(v)}(t),\zeta(v(\cdot,t))\rangle_{W^{-1,p'},W^{1,p}_0}\d t}&&\nonumber\\
&\le&\frac{1}{h}\int_{t_1}^{t_2} \int_\O B(\overline{\beta(v)}(\x,t+h))-B(\overline{\beta(v)}(\x,t))\d\x\d t\nonumber\\
&=&\frac{1}{h}\int_{t_2}^{t_2+h} \int_\O B(\overline{\beta(v)}(\x,t))\d\x\d t
-\frac{1}{h}\int_{t_1}^{t_1+h} \int_\O B(\overline{\beta(v)}(\x,t))\d\x\d t\label{S2used}\\
&=&\int_\O B(\beta(v)(\x,t_2))\d\x
-\frac{1}{h}\int_{t_1}^{t_1+h} \int_\O B(\beta(v)(\x,t))\d\x\d t.\nonumber
\end{eqnarray}
We used the estimate in Step 2 to justify the separation of the integrals in \eqref{S2used}.
We now take the $\limsup$ as $h\to 0$ of this inequality, using again Step 2 to see that
$B(\beta(v)(\cdot,t_2))$ is integrable and therefore take its integral out of the $\limsup$. Coming back
to \eqref{debbeta} we obtain
\begin{multline}\label{debfin}
\int_{t_1}^{t_2}\langle \partial_t \beta(v)(t),\zeta(v(\cdot,t))\rangle_{W^{-1,p'},W^{1,p}_0}\d t\\
\le 
\int_\O B(\beta(v)(\x,t_2))\d\x
-\liminf_{h\to 0}\frac{1}{h}\int_{t_1}^{t_1+h} \int_\O B(\beta(v)(\x,t))\d\x\d t.
\end{multline}
Since $\beta(v)\in C([0,T];L^2(\O)\weak)$, as $h\to 0$ we have
$\frac{1}{h}\int_{t_1}^{t_1+h}\beta(v)(t)\d t\to \beta(v)(t_1)$ weakly in $L^2(\O)$.
Hence, the convexity of $B$, Lemma \ref{lem:Zconv} and Jensen's inequality give
\begin{multline*}
\int_\O B(\beta(v)(\x,t_1))\d\x
\le \liminf_{h\to 0}\int_\O B\left(\frac{1}{h}\int_{t_1}^{t_1+h}\beta(v)(\x,t)\d t\right)\d\x\\
\le \liminf_{h\to 0}\int_\O \frac{1}{h}\int_{t_1}^{t_1+h} B(\beta(v)(\x,t))\d t\d\x.
\end{multline*}
Plugged into \eqref{debfin}, this inequality shows that \eqref{ippbeta} holds with $\le$ instead of $=$.
The reverse inequality is obtained by reversing the time. We consider
$\widetilde{v}(t)=v(t_1+t_2-t)$. Then $\zeta(\widetilde{v})$, $B(\beta(\widetilde{v}))$
and $\beta(\widetilde{v})$
have the same properties as $\zeta(v)$, $B(\beta(v))$ and $\beta(v)$, and $\beta(\widetilde{v})$
takes values $\beta(v)(t_1)$ at $t=t_2$ and $\beta(v)(t_2)$ at $t=t_1$.
Applying \eqref{ippbeta} with ``$\le$'' instead of ``$=$'' to $\widetilde{v}$ and using the fact
that $\partial_t \beta(\widetilde{v})(t)=-\partial_t \beta(v)(t_1+t_2-t)$,
we obtain \eqref{ippbeta} with ``$\ge$'' instead of ``$=$'' and the proof of \eqref{ippbeta} is complete.

The continuity of $t\in [0,T]\mapsto \int_\O B(\beta(v)(\x,t))\d\x$ is straightforward from
\eqref{ippbeta} as the left-hand side of this relation is continuous with respect to $t_1$ and $t_2$. 
\end{proof}

The following corollary states continuity properties and an essential formula
on the solution to \eqref{ellgenfllt}.

\begin{corollary}\label{ric:bou}
Under Assumptions \eqref{hypomega}--\eqref{hypfgnlt}, if $\bu$ is a solution of \eqref{ellgenfllt}
then:
\begin{enumerate}
\item the function $t\in [0,T]\mapsto \int_\O B(\beta(\bu)(\x,t))\d\x\in[0,\infty)$ is continuous
and bounded,
\item for any $T_0\in [0,T]$,
\begin{multline}
\int_\O B(\beta(\bu)(\x,T_0)) \d \x 
+ \int_{0}^{T_0}   \int_\O \bfa(\x,\nu(\bu(\x,t)),\nabla\zeta(\bu)(\x,t))
\cdot\nabla \zeta(\bu)(\x,t)\d\x \d t \\
=  \int_\O B(\beta(u_{\rm ini}(\x))) \d \x 
+  \int_{0}^{T_0}   \int_\O f(\x,t) \zeta(\bu)(\x,t) \d \x \d t, \label{form:ippsol}
\end{multline}
\item $\nu(\bu)$ is continuous $[0,T]\to L^2(\O)$.
\end{enumerate}
\end{corollary}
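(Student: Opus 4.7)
All three assertions are almost direct consequences of the integration-by-parts result of Lemma \ref{ricipp}, combined with the properties of $B$ collected in Lemma \ref{lembetaB} (and, for item 3, Lemma \ref{lem:Zconv}). By definition \eqref{ellgenfllt}, any weak solution $\bu$ satisfies exactly the regularity hypotheses of Lemma \ref{ricipp}: $\zeta(\bu)\in L^p(0,T;W^{1,p}_0(\O))$, $B(\beta(\bu))\in L^\infty(0,T;L^1(\O))$, $\beta(\bu)\in C([0,T];L^2(\O)\weak)$ and $\partial_t\beta(\bu)\in L^{p'}(0,T;W^{-1,p'}(\O))$, so the lemma applies directly to $v=\bu$.

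For item 1, the continuity of $t\mapsto\int_\O B(\beta(\bu)(\x,t))\d\x$ is exactly the last statement of Lemma \ref{ricipp}, and the boundedness is a rephrasing of $B(\beta(\bu))\in L^\infty(0,T;L^1(\O))$. For item 2, the plan is to test \eqref{ellgenfllt} with the admissible function $\vbarre(\x,t)=\mathbf{1}_{(0,T_0)}(t)\zeta(\bu)(\x,t)\in L^p(0,T;W^{1,p}_0(\O))$; the time-derivative contribution then reduces to $\int_0^{T_0}\langle\partial_t\beta(\bu),\zeta(\bu)\rangle_{W^{-1,p'},W^{1,p}_0}\d t$, and Lemma \ref{ricipp} applied with $t_1=0$, $t_2=T_0$ rewrites this term as $\int_\O B(\beta(\bu)(\x,T_0))\d\x-\int_\O B(\beta(u_{\rm ini}(\x)))\d\x$, using the initial condition $\beta(\bu)(\cdot,0)=\beta(u_{\rm ini})$ together with the continuity of $B\circ\beta$ (Lemma \ref{lembetaB}) to identify the boundary term at $t=0$. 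This gives \eqref{form:ippsol}.

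For item 3, I would apply the uniform convexity inequality \eqref{Bunifconv} pointwise with $s=\bu(\x,t)$ and $s'=\bu(\x,t')$ and integrate over $\O$ to obtain
\[
\int_\O\left(\nu(\bu(\x,t))-\nu(\bu(\x,t'))\right)^2\d\x\le 4L_\beta L_\zeta\int_\O\left[B(\beta(\bu)(\x,t))+B(\beta(\bu)(\x,t'))-2B\!\left(\tfrac{\beta(\bu)(\x,t)+\beta(\bu)(\x,t')}{2}\right)\right]\d\x.
\]
Item 1 controls the two single integrals: as $t\to t'$ their sum converges to $2\int_\O B(\beta(\bu)(\x,t'))\d\x$. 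For the midpoint term I use that $\beta(\bu)\in C([0,T];L^2(\O)\weak)$ forces $(\beta(\bu)(\cdot,t)+\beta(\bu)(\cdot,t'))/2\to\beta(\bu)(\cdot,t')$ weakly in $L^2(\O)$, and then Lemma \ref{lem:Zconv} applied to the convex continuous function $B$ (extended by $+\infty$ outside $\overline{R_\beta}$) provides the matching lower bound $\liminf_{t\to t'}\int_\O B(\text{midpoint})\d\x\ge\int_\O B(\beta(\bu)(\x,t'))\d\x$. Combining these two inputs makes the right-hand side tend to $0$, whence $\lim_{t\to t'}\|\nu(\bu(\cdot,t))-\nu(\bu(\cdot,t'))\|_{L^2(\O)}=0$, which is exactly the continuity of $\nu(\bu)$ into $L^2(\O)$.

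The only genuine subtlety, already present in Lemma \ref{ricipp}, is the bookkeeping between the continuous-in-time representative $\beta(\bu)(\cdot,\cdot)$ and the a.e. composition $\beta(\bu(\cdot,\cdot))$ emphasized at the end of Section \ref{sec:contprob}; the a.e. identifications used above are valid for a.e. $t$ and are sufficient for the limiting arguments. The heart of item 3 is the ``parallelogram''-type inequality \eqref{Bunifconv}, which is exactly what upgrades the weak-in-space continuity of $\beta(\bu)$ to the strong-in-space continuity of $\nu(\bu)$; no extra compactness is required beyond the convex lower semi-continuity of Lemma \ref{lem:Zconv}.
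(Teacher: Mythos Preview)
Your proposal is correct and follows essentially the same approach as the paper: items 1 and 2 come directly from Lemma \ref{ricipp} after testing \eqref{ellgenfllt} with $\vbarre=\mathbf{1}_{[0,T_0]}\zeta(\bu)$, and item 3 is obtained by combining the uniform-convexity inequality \eqref{Bunifconv} with the weak lower semi-continuity of Lemma \ref{lem:Zconv} applied to the midpoint term.

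The one place where the paper is more careful than your sketch is the construction of the continuous representative in item 3. You write $\lim_{t\to t'}\|\nu(\bu(\cdot,t))-\nu(\bu(\cdot,t'))\|_{L^2(\O)}=0$ and declare this ``is exactly the continuity'', but as you yourself note, $\nu(\bu(\cdot,t))$ is only defined for a.e.\ $t$. The paper makes this rigorous by taking \emph{two} sequences $(s_l),(t_k)$ in the full-measure set $\mathcal T=\{\tau:\beta(\bu)(\cdot,\tau)=\beta(\bu(\cdot,\tau))\}$, both converging to an arbitrary $s\in[0,T]$; your estimate then shows $(\nu(\bu(\cdot,s_l)))_l$ is Cauchy in $L^2(\O)$, defines $\nu(\bu)(\cdot,s)$ as its (sequence-independent) limit, checks it agrees a.e.\ with $\nu(\bu(\cdot,\cdot))$, and finally verifies continuity at every $s$ by a diagonal argument. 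Your last paragraph correctly flags that this bookkeeping is required; the paper simply carries it out.
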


\begin{remark}
The continuity of $\nu(\bu)$ has to be understood in the same sense as the
continuity of $\beta(\bu)$, that is $\nu(\bu)$ is a.e. on $\O\times (0,T)$
equal to a continuous function $[0,T]\to L^2(\O)$. We use in particular the notation
$\nu(\bu)(\cdot,\cdot)$ for the continuous-in-time representative of $\nu(\bu(\cdot,\cdot))$,
similarly to the way we denote the continuous-in-time representative of $\beta(\bu(\cdot,\cdot))$.
\end{remark}

\begin{proof}

The continuity of $t\in [0,T]\mapsto \int_\O B(\beta(\bu)(\x,t))\d\x\in[0,\infty)$ and
Formula \eqref{form:ippsol} are straightforward consequences of Lemma \ref{ricipp}
with $v=\bu$ and using \eqref{ellgenfllt} with $\vbarre=\zeta(\bu)$. Note that the
bound on $\int_\O B(\beta(\bu)(\x,t))\d\x$ can be seen as a consequence of \eqref{form:ippsol},
or from Step 2 in the proof of Lemma \ref{ricipp}.

Let us prove the strong continuity of $\nu(\bu):[0,T]\mapsto L^2(\O)$. 
Let $\mathcal T$ be the set of $\tau\in [0,T]$ such that $\beta(\bu(\cdot,\tau))=\beta(\bu)(\cdot,\tau)$
a.e. on $\O$, and let $(s_l)_{l\in\N}$ and $(t_k)_{k\in\N}$ be two sequences in
$\mathcal T$ that converge to the same value $s$.
Invoking \eqref{Bunifconv} we can write
\begin{multline}\label{contnu1}
\int_\O (\nu(\bu(\x,s_l))-\nu(\bu(\x,t_k)))^2\d\x
\le 4L_\beta L_\zeta \left(\int_\O B(\beta(\bu)(\x,s_l))\d\x
+\int_\O B(\beta(\bu)(\x,t_k))\d\x\right)\\
-8L_\beta L_\zeta \int_\O B\left(\frac{\beta(\bu)(\x,s_l)+\beta(\bu)(\x,t_k)}{2}\right)\d\x.
\end{multline}
Since $\frac{\beta(\bu)(\cdot,s_l)+\beta(\bu)(\cdot,t_k)}{2}\to \beta(\bu)(\cdot,s)$
weakly in $L^2(\O)$ as $l,k\to\infty$,
Lemma \ref{lem:Zconv} gives
\[
\int_\O B\left(\beta(\bu)(\x,s)\right)\d\x
\le\liminf_{l,k\to\infty}\int_\O B\left(\frac{\beta(\bu)(\x,s_l)+\beta(\bu)(\x,t_k)}{2}\right)\d\x.
\]
Taking the $\limsup$ as $l,k\to\infty$ of \eqref{contnu1} and using
the continuity of $t\mapsto \int_\O B(\beta(\bu)(\x,t))\d\x$ thus shows
that 
\be\label{contnu2}
||\nu(\bu(\cdot,s_l))-\nu(\bu(\cdot,t_k))||_{L^2(\O)}\to 0\quad\mbox{ as $l,k\to\infty$}.
\ee

The existence of an a.e. representative of $\nu(\bu(\cdot,\cdot))$ which is
continuous $[0,T]\mapsto L^2(\O)$ is a direct consequence of this convergence.
Let $s\in [0,T]$ and $(s_l)_{l\in\N}\subset \mathcal T$ that converges to $s$.
Applied with $t_k=s_k$, \eqref{contnu2} shows that $(\nu(\bu(\cdot,s_l)))_{l\in\N}$ is a Cauchy sequence in $L^2(\O)$ and therefore that $\lim_{l\to\infty}
\nu(\bu(\cdot,s_l))$ exists in $L^2(\O)$. Moreover, \eqref{contnu2} shows that this
limit, that we denote by $\nu(\bu)(\cdot,s)$, does not depend on the sequence
in $\mathcal T$ that converges to $s$. Whenever $s\in\mathcal T$, the choice
$t_k=s$ in \eqref{contnu2} shows that $\nu(\bu)(\cdot,s)=\nu(\bu(\cdot,s))$ a.e. on $\O$,
and $\nu(\bu)(\cdot,\cdot)$ is therefore equal to $\nu(\bu(\cdot,\cdot))$ a.e. on $\O\times (0,T)$.

It remains to establish that $\nu(\bu)$ thus defined is continuous $[0,T]\mapsto L^2(\O)$.
For any $(\tau_r)_{r\in\N}\subset [0,T]$
that converges to $\tau\in [0,T]$, we can pick $s_r\in \mathcal T\cap (\tau_r-\frac{1}{r},\tau_r+\frac{1}{r})$
and $t_r\in \mathcal T\cap (\tau-\frac{1}{r},\tau+\frac{1}{r})$ such that
\[
||\nu(\bu)(\cdot,\tau_r)-\nu(\bu(\cdot,s_r))||_{L^2(\O)}
\le \frac{1}{r}\,,\quad
||\nu(\bu)(\cdot,\tau)-\nu(\bu(\cdot,t_r))||_{L^2(\O)}\le \frac{1}{r}.
\]
We therefore have
\[
||\nu(\bu)(\cdot,\tau_r)-\nu(\bu)(\cdot,\tau)||_{L^2(\O)}
\le
\frac{2}{r}+||\nu(\bu(\cdot,s_r))-\nu(\bu(\cdot,t_r))||_{L^2(\O)}.
\]
This proves by \eqref{contnu2} with $l=k=r$ that $\nu(\bu)(\cdot,\tau_r)
\to \nu(\bu)(\cdot,\tau)$ in $L^2(\O)$ as $r\to\infty$, and the proof is complete. \end{proof}

\section{Proof of the convergence theorems}\label{sec:proof}

\subsection{Estimates on the approximate solution}

As usual in the study of numerical methods for PDE with strong non-linearities
or without regularity assumptions on the data, everything starts with \emph{a priori} estimates.

\begin{lemma}[$L^\infty(0,T;L^2(\O))$ estimate and discrete $L^p(0,T;W^{1,p}_0(\O))$ estimate]
\label{estimldlp}
Under Assumptions \eqref{hypgnl}, let $\disc$ be a space-time gradient discretisation in the sense of 
Definition \ref{def-stcons}. Let $u$ be a solution to Scheme \eqref{ellgenfdiscllt}.

Then, for any $T_0\in (0,T]$, denoting by $k=1,\ldots,N$ the index such that $T_0\in (t^{(k-1)},t^{(k)}]$
we have
\begin{multline}\label{app:ipp}
\int_\O B(\Pi_\disc \beta(u)(\x,T_0))\d\x+\int_0^{T_0}\int_\O \bfa(\x,\Pi_\disc\nu(u)(\x,t),\nabla_\disc \zeta(u)(\x,t))\cdot\nabla_\disc \zeta(u)(\x,t)\d\x\d t\\
\le \int_\O B(\Pi_\disc \beta(\interp_\disc u_{\rm ini})(\x))\d\x
+\int_0^{t^{(k)}}\int_\O f(\x,t)\Pi_\disc \zeta(u)(\x,t)\d\x\d t.
\end{multline}
Consequently, there exists $\ctel{estimld}>0$ only depending on $p$, $L_\beta$, $C_P\ge C_\disc$
(see Definition \ref{def-coer}), $C_{\rm ini} \ge \Vert \Pi_\disc\interp_\disc u_{\rm ini}\Vert_{L^2(\O)}$, $f$, $\underline a$ and the constants $K_0$, $K_1$ and $K_2$ in \eqref{growthB} such that
\be\begin{array}{c}
\Vert\Pidisc B( \beta(u))\Vert_{L^\infty(0,T;L^1(\O))} \le \cter{estimld}\,,\;
\Vert\nabla_{\disc} \zeta(u) \Vert_{L^p(\O\times(0,T))^d} \le  \cter{estimld}
\\[0.5em]
\mbox{ and }
\Vert\Pidisc  \beta(u)\Vert_{L^\infty(0,T;L^2(\O))} \le \cter{estimld}.
\end{array}
\label{estimhund}\ee
\end{lemma}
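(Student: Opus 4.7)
The plan is to establish \eqref{app:ipp} by a discrete analog of the integration-by-parts formula of Lemma \ref{ricipp}, and then extract the three bounds in \eqref{estimhund} via coercivity together with Hölder's and Young's inequalities. Let $T_0 \in (0,T]$ and fix $k \in \{1,\ldots,N\}$ with $T_0 \in (t^{(k-1)}, t^{(k)}]$. The natural test function in \eqref{ellgenfdiscllt} is $v = (v^{(n)})_{n=1,\ldots,N}$ defined by $v^{(n)} = \zeta(u^{(n)}) \in X_{\disc,0}$ (see Remark \ref{rem:nlfxd}) for $n \le k$ and $v^{(n)} = 0$ for $n > k$. The gradient and source contributions then reduce to integrals over $\O \times (0, t^{(k)})$; since $\bfa(\cdot, \Pi_\disc\nu(u), \nabla_\disc \zeta(u)) \cdot \nabla_\disc \zeta(u)$ is pointwise nonnegative by \eqref{hypanl2}, restricting this integral to $(0, T_0) \subset (0, t^{(k)})$ only decreases the left-hand side, which matches the asymmetry in \eqref{app:ipp}.

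The crux is the discrete time-derivative term. Using the piecewise-constant identity \eqref{prop:pcr}, we have $\Pi_\disc \beta(u^{(n)}) = \beta(\Pi_\disc u^{(n)})$ and $\Pi_\disc \zeta(u^{(n)}) = \zeta(\Pi_\disc u^{(n)})$, so this contribution equals
\[
\sum_{n=0}^{k-1} \int_\O \bigl[\beta(\Pi_\disc u^{(n+1)}) - \beta(\Pi_\disc u^{(n)})\bigr]\,\zeta(\Pi_\disc u^{(n+1)})\,\d\x.
\]
Applying the sub-differential inequality \eqref{sousdiffB} pointwise with $a = \Pi_\disc u^{(n+1)}(\x)$ and $S = \beta(\Pi_\disc u^{(n)})(\x) \in R_\beta$ shows that each integrand is bounded below by $B(\beta(\Pi_\disc u^{(n+1)})) - B(\beta(\Pi_\disc u^{(n)}))$. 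Summing gives a telescoping lower bound; combined with $\Pi_\disc u^{(k)}(\cdot) = \Pi_\disc u(\cdot, T_0)$ and $u^{(0)} = \interp_\disc u_{\rm ini}$, this produces the two $B$-terms of \eqref{app:ipp}.

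To derive \eqref{estimhund}, coercivity \eqref{hypanl2} bounds the gradient term in \eqref{app:ipp} from below by $\underline a \,\Vert\nabla_\disc \zeta(u)\Vert_{L^p(\O \times (0,T_0))^d}^p$. The upper bound on $B\circ\beta$ in \eqref{growthB} controls the initial term by $K_2\,\Vert\Pi_\disc \interp_\disc u_{\rm ini}\Vert_{L^2(\O)}^2 \le K_2 C_{\rm ini}^2$. For the source term, Hölder's inequality combined with the discrete Poincaré estimate $\Vert\Pi_\disc \zeta(u)\Vert_{L^p(\O \times (0,t^{(k)}))} \le C_P \Vert\nabla_\disc \zeta(u)\Vert_{L^p(\O \times (0,t^{(k)}))^d}$ yields an upper bound proportional to $\Vert f\Vert_{L^{p'}(\O \times (0,T))}\Vert\nabla_\disc \zeta(u)\Vert_{L^p(\O \times (0,T))^d}$; Young's inequality then absorbs a small multiple of $\Vert\nabla_\disc \zeta(u)\Vert_{L^p}^p$ into the coercive term. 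Taking $T_0 = T$ delivers the gradient bound, and plugging it back into \eqref{app:ipp} while invoking the lower bound $K_0 \beta(s)^2 - K_1 \le B(\beta(s))$ from \eqref{growthB} furnishes the $L^\infty(0,T;L^1(\O))$ estimate on $\Pi_\disc B(\beta(u))$ and, by taking the supremum over $T_0 \in (0,T]$, the $L^\infty(0,T;L^2(\O))$ estimate on $\Pi_\disc \beta(u)$.

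I expect the subtle step to be the sub-differential telescoping: it is the discrete counterpart of the continuous chain rule $(B\circ\beta)' = \zeta\beta'$ of Lemma \ref{lembetaB}, but yields only an \emph{inequality} (the extra numerical dissipation coming from the implicit-in-time discretisation). As highlighted in the remark following \eqref{conv-theorique3}, an inequality in \eqref{app:ipp} is entirely sufficient for the subsequent convergence analysis, which is what makes this discrete energy estimate the correct analog of \eqref{form:ippsoleps}.
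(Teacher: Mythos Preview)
Your proof is correct and follows essentially the same approach as the paper: the same test function $v^{(n)} = \zeta(u^{(n)})\mathbf{1}_{n\le k}$, the same pointwise application of the sub-differential inequality \eqref{sousdiffB} (with $a=\Pi_\disc u^{(n+1)}$, $S=\beta(\Pi_\disc u^{(n)})$) to telescope the discrete time-derivative term, the same use of nonnegativity of the $\bfa$-integrand to pass from $(0,t^{(k)})$ to $(0,T_0)$, and the same combination of coercivity \eqref{hypanl2}, the discrete Poincar\'e bound, Young's inequality, and the growth estimates \eqref{growthB} to extract the three bounds in \eqref{estimhund}. Your remark that the implicit discretisation is what turns the continuous energy equality into an inequality is exactly the point the paper makes as well.
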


\begin{proof}
By using \eqref{prop:pcr} and \eqref{sousdiffB} we notice that for any $n=0,\ldots,N-1$ and
any $t\in (t^{(n)},t^{(n+1)}]$
\begin{eqnarray*}
\Pi_\disc\delta_\disc \beta(u)(t) \Pi_\disc \zeta(u^{(n+1)}) 
&=&\frac{1}{\dt^{(n+\half)}}\left(\beta(\Pi_\disc u^{(n+1)})-\beta(u^{(n)})\right)
\zeta(\Pi_{\disc}u^{(n+1)})\\
&\ge&\frac{1}{\dt^{(n+\half)}}\left( B(\Pi_\disc\beta(u^{(n+1)})) -  B(\Pi_\disc\beta(u^{(n)}))\right).
\end{eqnarray*}
Hence, with $v = (\zeta(u^{(1)}),\ldots,\zeta(u^{(k)}),0,\ldots,0)
\subset X_{\disc,0}$ in \eqref{ellgenfdiscllt} we find
\begin{multline}\label{app:ippini}
\int_\O B(\Pi_\disc \beta(u)(\x,t^{(k)}))\d\x+ \int_0^{t^{(k)}}\int_\O \bfa(\x,\Pi_\disc\nu(u)(\x,t),
\nabla_\disc \zeta(u)(\x,t))\cdot\nabla_\disc \zeta(u)(\x,t)\d\x\d t\\
\le \int_\O B(\Pi_\disc \beta(u^{(0)})(\x))\d\x
+\int_0^{t^{(k)}} f(\x,t)\Pi_\disc \zeta(u)(\x,t)\d\x\d t.
\end{multline}
Equation \eqref{app:ipp} is a straightforward consequence of this estimate,
of the relation $\beta(u)(\cdot,T_0)=\beta(u)(\cdot,t^{(k)})$ (see \eqref{def-stfunctions})
and of the fact that the integrand involving $\bfa$ is nonnegative on $[T_0,t^{(k)}]$.

By using Young's inequality $ab\le \frac{1}{p}a^p+\frac{1}{p'}b^{p'}$, we can write
\begin{multline*}
\int_0^{t^{(k)}}\int_\O f(\x,t)\Pi_\disc\zeta(u)(\x,t)\d\x\d t
\\
\le \frac {2^{1/(p-1)} C_\disc^{p'}} { (p \underline a)^{1/(p-1)}\ p'}
\Vert f\Vert_{L^{p'}(\O\times(0,t^{(k)}))}^{p'} + \frac { \underline a} {2 C_\disc^{p}}\Vert\Pidisc \zeta(u)\Vert_{L^p(\O\times(0,t^{(k)}))}^{p}
\end{multline*}
and the first two estimates in \eqref{estimhund} therefore follow from \eqref{app:ippini},
\eqref{growthB}, the coercivity assumption \eqref{hypanl2} on $\bfa$ and the definition \ref{def-coer} of $C_\disc$. 
The estimate on $\Pi_\disc\beta(u)=\beta(\Pi_\disc u)$ in $L^\infty(0,T;L^2(\O))$ is a consequence
of the estimate on $B(\beta(\Pi_\disc u))$ in $L^\infty(0,T;L^1(\O))$ and
of \eqref{growthB}. \end{proof}

\begin{corollary}[Existence of a solution to the gradient scheme]\label{cor:exist}
Under Assumptions \eqref{hypgnl}, if $\disc$ is a gradient discretisation in the sense
of Definition \ref{def-stcons} then there exists at least a solution to the 
gradient scheme \eqref{ellgenfdiscllt}.
\end{corollary}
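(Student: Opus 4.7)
The plan is to build $(u^{(n)})_{n=0,\ldots,N}$ by induction, starting from $u^{(0)} = \interp_\disc u_{\rm ini}$. At step $n$, assuming $u^{(n)}$ is known, localising the test functions in \eqref{ellgenfdiscllt} to those supported on the single interval $(t^{(n)},t^{(n+1)}]$ reduces the problem to a finite-dimensional nonlinear equation in $X_{\disc,0}$ for the unknown $u^{(n+1)}$. Identifying $X_{\disc,0}$ with $\R^{|I|}$ via any fixed inner product, this equation takes the form $F(u^{(n+1)}) = 0$ for a continuous map $F : X_{\disc,0} \to X_{\disc,0}$ that encodes the discrete time-derivative of $\beta(u)$, the discrete divergence of $\bfa(\cdot,\nu(u),\nabla_\disc\zeta(u))$, and the source term averaged over $(t^{(n)},t^{(n+1)}]$. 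The strategy is to apply a Brouwer / topological degree argument, the main input being an a priori bound which is essentially the one-step version of Lemma \ref{estimldlp}.

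I would first check that $F$ is continuous. For the accumulation part this is immediate from the continuity of $\beta$; for the divergence part, the Carathéodory property \eqref{hypanl1} and the growth bound \eqref{hypanl4} on $\bfa$, combined with the finite-dimensionality of $X_{\disc,0}$, make the dominated convergence theorem applicable to $w \mapsto \bfa(\cdot,\nu(\Pi_\disc w),\nabla_\disc\zeta(w)) \in L^{p'}(\O)^d$. I would then establish an a priori bound on any solution of $F(w)=0$ by testing the equation against $v = \zeta(w)$ and reproducing the energy computation that led to \eqref{app:ippini}: this yields a bound on $\|\nabla_\disc\zeta(w)\|_{L^p(\O)^d}$ depending only on $u^{(n)}$, $f$ and $\disc$. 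The coercivity assumption \eqref{hypzeta} on $\zeta$ (namely $|\zeta(s)| \ge M_0|s|-M_1$) then transfers this bound to $\Pi_\disc w$ in $L^p(\O)$, hence to $w$ in $X_{\disc,0}$ since all norms are equivalent in this finite-dimensional space.

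To conclude, I would introduce a homotopy $(F_\lambda)_{\lambda\in[0,1]}$ linking $F_1 = F$ to a simple map $F_0$ that admits $0$ as its unique zero: for example by replacing $\bfa(\x,s,\bxi)$ with $\lambda\bfa(\x,s,\bxi) + (1-\lambda)|\bxi|^{p-2}\bxi$ and similarly scaling $\beta$, $\zeta$ and the data, in a way that preserves the coercive structure required in the preceding step. The same test-function argument then produces a bound on zeros of $F_\lambda$ that is uniform in $\lambda \in [0,1]$, so that homotopy invariance of the Brouwer degree yields $\deg(F,B_R,0) = \deg(F_0,B_R,0) \neq 0$ for $R$ large enough, hence a zero of $F$ in $B_R$, providing the desired $u^{(n+1)}$.

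The principal difficulty is the design of the homotopy in the last step: one must arrange for the coercivities of $\zeta$ and of $\bfa$ — both essential to the uniform a priori estimate — to survive all along the path $\lambda \in [0,1]$, while still trivialising the endpoint $F_0$. Once this is arranged, everything else is a direct transposition to the discrete setting of the energy estimate already established in Lemma \ref{estimldlp}.
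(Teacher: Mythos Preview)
Your approach is correct and will work, but it differs from the paper's in two respects worth noting.

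First, you proceed by induction on the time step, solving a single nonlinear system in $X_{\disc,0}$ at each step; the paper instead works on the full product space $E=\{(u^{(n)})_{n=1,\ldots,N}\}$ at once, defining a single map $T:E\to E$ and looking for $T(u)=f_E$. Both are valid; your choice makes the a~priori estimate slightly more transparent (it is literally the one-step version of \eqref{app:ippini}), while the paper's avoids re-running the degree argument $N$ times.

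Second, and more substantively, your homotopy deforms $\bfa$ towards the $p$-Laplacian and simultaneously deforms $\beta$, $\zeta$ and the data, so that $F_0$ becomes a (strictly monotone) discrete $p$-Laplace-type operator whose unique zero is $0$. This works, but you have correctly identified that the delicate point is preserving the coercivity of $\zeta$ and of $\bfa$ uniformly along the path --- and you still have to argue that $\deg(F_0,B_R,0)\neq 0$ at the endpoint. The paper sidesteps all of this with a much simpler homotopy: it equips $E$ with the Euclidean dot product on the degrees of freedom and takes $h(\rho,u)=\rho T(u)+(1-\rho)u$, connecting $T$ directly to the identity. The key observation is then that, with this particular inner product, $u\cdot\zeta(u)=\sum_i u_i\zeta(u_i)\ge c_1|u|^2-c_2$ by the sign and growth of $\zeta$; combined with $T(u)\cdot\zeta(u)\ge c_3|u|^p-c_4$ (the energy estimate you already know), one bounds $f_E\cdot\zeta(u)$ from below and forces any solution of $h(\rho,u)=f_E$ into a fixed ball, uniformly in $\rho$. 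The endpoint degree is then trivially $d(\mathrm{Id},B_R,f_E)=1$.

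In short: your route is sound, but the paper's homotopy to the identity --- enabled by the specific choice of inner product on the degrees of freedom --- is cleaner and avoids having to deform $\beta$, $\zeta$, $\bfa$ and the data simultaneously.
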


\begin{proof}
We endow $E=\{(u^{(n)})_{n=1,\ldots,N}\,:\,u^{(n)}\in X_{\disc,0}\mbox{ for all $n$}\}$
with the dot product ``$\cdot$'' 
coming from the degrees of freedom $I$ (see Remark \ref{rem:nlfxd}), and
we denote by $|\cdot|$ the corresponding norm.
Let $T:E\mapsto E$ be such that, for all $u,v\in E$,
\[
T(u)\cdot v = \int_0^T\int_\O \left[\Pi_\disc \delta_{\disc}\beta(u)(\x,t) \Pi_\disc v(\x,t) + \bfa(\x, \Pi_\disc \nu(u)(\x,t),\grad_\disc \zeta( u)(\x,t))\cdot\grad_\disc v(\x,t)\right]
\d\x\d t,
\]
where $\delta_\disc^{(\frac{1}{2})}\beta(u)$ is defined by setting $u^{(0)}=\interp_\disc u_{\rm ini}$.
Set $f_E\in E$ such that, for all $v\in E$,
$f_E\cdot v = \int_0^T\int_\O f(\x,t)  \Pi_\disc v(\x,t) \d\x\d t$.
A solution to \eqref{ellgenfdiscllt} is an element $u\in E$ such that $T(u)=f_E$. 
The continuity and growth properties of $\beta$, $\zeta$ and $\bfa$ clearly show that $T$
is continuous $E\mapsto E$, so we can prove that $T(u)=f_E$ has
has a solution by establishing that, for $R$ large
enough, $d(T,B(R),f_E)\not=0$ where $d$ is the Brouwer topological degree \cite{deimling}
and $B(R)$ is the open ball of radius $R$ in $E$.

Following the reasoning used to prove \eqref{app:ipp}, 
the coercivity property \eqref{hypanl2} on $\bfa$ and the equivalence of all norms
on $E$ give $\ctel{Cnotdepu}$ and $\ctel{Cnotdepu2}$ not depending on $u\in E$ such that
\[
T(u)\cdot\zeta(u) \ge \underline{a}||\nabla_\disc \zeta(u)||_{L^p(\O)^d}^p - ||B(\Pi_\disc \beta(\interp_\disc
u_{\rm ini}))||_{L^1(\O)}
\ge \cter{Cnotdepu} |u|^p - \cter{Cnotdepu2}.
\]
From the choice of the dot product on $E$ and Assumption \eqref{hypzeta} on $\zeta$,
we have $|\zeta(v)|\le L_\zeta|v|$ and $\zeta(v)\cdot v\ge \ctel{ceq1}|v|^2 - \ctel{ceq2}$,
with $\cter{ceq1}>0$ and $\cter{ceq2}$ not depending on $v\in E$.
Let us consider the homotopy $h(\rho,u)=\rho T(u)+(1-\rho)u$ between $T$ and ${\rm Id}$,
and assume that $u$ is a solution to $h(\rho,u)=f_E$ for some $\rho\in [0,1]$.
We have if $|u|\ge 1$
\begin{multline*}
|f_E|L_\zeta |u|\ge f_E\cdot\zeta(u) = \rho T(u)\cdot \zeta(u) + (1-\rho) u\cdot \zeta(u)\\
\ge \rho \cter{Cnotdepu}|u|^p - \rho \cter{Cnotdepu2} + (1-\rho)\cter{ceq1}|u|^2 - (1-\rho)\cter{ceq2}
\ge \min(\cter{Cnotdepu},\cter{ceq1})|u|^{\min(p,2)} - \cter{Cnotdepu2}-\cter{ceq2}.
\end{multline*}
Hence, if we select $R>1$ such that $|f_E|L_\zeta R<
\min(\cter{Cnotdepu},\cter{ceq1})R^{\min(p,2)}- \cter{Cnotdepu2}-\cter{ceq2}$, which is
possible since $\min(p,2)>1$, no solution to $h(\rho,u)=f_E$ can lie on $\partial B(R)$.
The invariance by homotopy of the 
topological degree then gives $d(T,B(R),f_E)=d({\rm Id},B(R),f_E)$, and this last degree is
equal to $1$ if we select $R$ such that $f_E\in B(R)$. The proof is complete. \end{proof}

\begin{lemma}[Estimate on the dual semi-norm of the discrete time derivative]\label{estl1l2snstdt}~

Under Assumptions \eqref{hypgnl}, let $\disc$ be a space-time gradient discretisation in the sense of 
Definition \ref{def-stcons}. Let $u$ be a solution to Scheme \eqref{ellgenfdiscllt}. 
Then there exists  $\ctel{estimdt}$ only depending on $p$, $L_\beta$, $C_P\ge C_{\disc}$,
$C_{\rm ini}\ge \Vert \Pi_\disc I_\disc u_{\rm ini}\Vert_{L^2(\O)}$, $f$,
$\underline a$, $\mu$, $\overline{a}$, $T$ and the constants $K_0$, $K_1$ and $K_2$
in \eqref{growthB} such that
\be
\int_0^T \vert \delta_{\disc}  \beta(u)(t)\vert_{\star,\disc}^{p'}\d t\le \cter{estimdt}.
\label{nsestimlquatier}\ee
\end{lemma}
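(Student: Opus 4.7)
The plan is to compute $|\delta_\disc^{(n+\half)}\beta(u)|_{\star,\disc}$ by testing the gradient scheme \eqref{ellgenfdiscllt} against functions supported on a single time slice. Given $z\in X_{\disc,0}$ with $\|\nabla_\disc z\|_{L^p(\O)^d}=1$, define $v=(v^{(k)})_{k=1,\ldots,N}$ with $v^{(n+1)}=z$ and $v^{(k)}=0$ for $k\neq n+1$. Plugging this test function into \eqref{ellgenfdiscllt} collapses all integrals to $(t^{(n)},t^{(n+1)}]$ and yields
\[
\dt^{(n+\half)}\int_\O \Pi_\disc\delta_\disc^{(n+\half)}\beta(u)\,\Pi_\disc z\,\d\x
= \int_{t^{(n)}}^{t^{(n+1)}}\!\!\int_\O f\,\Pi_\disc z\,\d\x\d t - \dt^{(n+\half)}\int_\O \bfa(\x,\Pi_\disc\nu(u^{(n+1)}),\nabla_\disc\zeta(u^{(n+1)}))\cdot\nabla_\disc z\,\d\x.
\]

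Next, I would bound each term on the right-hand side. For the $\bfa$-term, the growth assumption \eqref{hypanl4} together with H\"older's inequality gives a bound by $\dt^{(n+\half)}\bigl(\|\overline{a}\|_{L^{p'}(\O)}+\mu\|\nabla_\disc\zeta(u^{(n+1)})\|_{L^p(\O)^d}^{p-1}\bigr)$ (since $\|\nabla_\disc z\|_{L^p}=1$). For the $f$-term, I would use H\"older's inequality first in space, together with the coercivity bound $\|\Pi_\disc z\|_{L^p(\O)}\le C_\disc\le C_P$, and then in time, to obtain an upper bound by $C_P(\dt^{(n+\half)})^{1/p}\bigl(\int_{t^{(n)}}^{t^{(n+1)}}\|f(\cdot,t)\|_{L^{p'}(\O)}^{p'}\d t\bigr)^{1/p'}$. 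Dividing through by $\dt^{(n+\half)}$ and taking the supremum over $z$ yields the pointwise-in-time estimate
\[
|\delta_\disc^{(n+\half)}\beta(u)|_{\star,\disc}
\le \|\overline{a}\|_{L^{p'}(\O)}+\mu\|\nabla_\disc\zeta(u^{(n+1)})\|_{L^p(\O)^d}^{p-1}
+ C_P(\dt^{(n+\half)})^{-1/p'}\Bigl(\int_{t^{(n)}}^{t^{(n+1)}}\|f(\cdot,t)\|_{L^{p'}(\O)}^{p'}\d t\Bigr)^{1/p'}.
\]

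To close the argument, raise this estimate to the power $p'$ using the elementary inequality $(a+b+c)^{p'}\le 3^{p'-1}(a^{p'}+b^{p'}+c^{p'})$, multiply by $\dt^{(n+\half)}$, and sum over $n=0,\ldots,N-1$. The crucial algebraic observation is that $(p-1)p'=p$, so the middle term becomes $\mu^{p'}\sum_n \dt^{(n+\half)}\|\nabla_\disc\zeta(u^{(n+1)})\|_{L^p(\O)^d}^p = \mu^{p'}\|\nabla_\disc\zeta(u)\|_{L^p(\O\times(0,T))^d}^p$; and the $f$-term contribution telescopes nicely, since the factor $\dt^{(n+\half)}$ exactly cancels $(\dt^{(n+\half)})^{-1}$ and the remaining time integrals add up to $\|f\|_{L^{p'}(\O\times(0,T))}^{p'}$. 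The $\overline{a}$-term simply contributes $T\|\overline{a}\|_{L^{p'}(\O)}^{p'}$.

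The main obstacle — if any — is a bookkeeping one: tracking the exponents so that the factor $(\dt^{(n+\half)})^{-1/p'}$ produced by H\"older's inequality in time on the $f$-term is precisely neutralised by the weight $\dt^{(n+\half)}$ after raising to the power $p'$ and summing. Once this is verified, invoking the a priori bound on $\|\nabla_\disc\zeta(u)\|_{L^p(\O\times(0,T))^d}$ from \eqref{estimhund} converts the right-hand side into a constant depending only on the stated parameters, giving the required $\cter{estimdt}$.
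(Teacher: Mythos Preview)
Your proof is correct and follows essentially the same approach as the paper: test the scheme, apply the growth assumption \eqref{hypanl4} and H\"older's inequality, and invoke the a priori bound \eqref{estimhund}. The only difference is organisational: the paper works globally in time, first establishing $\int_0^T\int_\O \Pi_\disc\delta_\disc\beta(u)\,\Pi_\disc v \le C\|\nabla_\disc v\|_{L^p(\O\times(0,T))^d}$ for a generic $v$ and then choosing $v^{(n+1)}=|\delta_\disc^{(n+\half)}\beta(u)|_{\star,\disc}^{p'-1}z^{(n)}$ with $z^{(n)}$ realising the supremum in \eqref{nstnormemel}, whereas you work slice-by-slice and sum --- but the underlying computation is the same.
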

\begin{proof}
Let us take a generic $v=(v^{(n)})_{n=1,\ldots,N}\subset X_{\disc,0}$ as a test function in Scheme \eqref{ellgenfdiscllt}. We have, thanks to Assumption \eqref{hypanl4} on $\bfa$,
\begin{multline*}
\int_0^T  \int_\O \Pi_\disc \delta_{\disc}\beta(u)(\x,t) \Pi_\disc v(\x,t)\d\x\d t
\le \int_0^T \int_\O(\overline{a}(\x) + \mu|\grad_\disc \zeta(u)(\x,t)|^{p-1})
\vert\grad_\disc v(\x,t)\vert \d\x\d t \\
+ \int_0^T\int_\O f(\x,t)  \Pi_\disc v(\x,t) \d\x\d t.
\end{multline*}
Using H\"older's inequality, Definition \ref{def-coer} and Estimates \eqref{estimhund},
this leads to the existence of $\ctel{cc1}>0$ only depending on
$p$, $L_\beta$, $C_P$, $C_{\rm ini}$, $f$, $\underline{a}$, $\overline{a}$, $\mu$ and $K_0$, $K_1$ and $K_2$ such that
\[
\int_0^T \int_\O \Pi_\disc \delta_{\disc} \beta(u)(\x,t) \Pi_\disc v(\x,t)\d\x\d t
\le\cter{cc1}\Vert \grad_\disc v\Vert_{L^p(0,T;L^p(\O))^d}.
\]
The proof of \eqref{nsestimlquatier} is completed by selecting
$v=(|\delta_\disc^{(n+\half)} \beta(u)|_{\star,\disc}^{p'-1}z^{(n)})_{n=1,\ldots,N}$ with
$(z^{(n)})_{n=1,\ldots,N}\subset X_{\disc,0}$ such that, for any $n=1,\ldots,N$,
$z^{(n)}$ realises the supremum in \eqref{nstnormemel} with $w=\delta_\disc^{(n+\half)}\beta(u)$. \end{proof}

\begin{lemma}[Estimate on the time translates of $\nu(u)$]\label{esttimetransl}~

Under Assumptions \eqref{hypgnl}, let $\disc$ be a space-time gradient discretisation in the sense of 
Definition \ref{def-stcons}. Let $u$ be a solution to Scheme \eqref{ellgenfdiscllt}. 
Then there exists  $\ctel{parnlestimtt}$ only depending on $p$, $L_\beta$, $L_\zeta$, $C_P\ge C_{\disc}$,
$C_{\rm ini}\ge \Vert \Pi_\disc I_\disc u_{\rm ini}\Vert_{L^2(\O)}$, $f$,
$\underline a$, $\mu$, $\overline{a}$, $T$ and $K_0$, $K_1$ and $K_2$ in \eqref{growthB} such that
\be
\norm{\Pi_\disc \nu(u)(\cdot,\cdot+\tau)-\Pi_\disc \nu(u)(\cdot,\cdot)}{L^2(\O\times(0,T-\tau))}^2 
\le\cter{parnlestimtt}(\tau + \dt), \quad\forall \tau\in(0,T).
\label{parnltrt}
\ee
\end{lemma}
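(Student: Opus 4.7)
\medskip
\noindent\textbf{Proof plan for Lemma \ref{esttimetransl}.}
The plan is to exploit the pointwise inequality \eqref{relchizetabeta}, applied with $a=\Pi_\disc u(\x,t+\tau)$ and $b=\Pi_\disc u(\x,t)$, combined with the piecewise-constant property \eqref{prop:pcr} which commutes $\Pi_\disc$ with the scalar functions $\nu$, $\beta$ and $\zeta$. This gives
\[
(\Pi_\disc\nu(u)(\x,t+\tau)-\Pi_\disc\nu(u)(\x,t))^2
\le L_\beta L_\zeta\bigl(\Pi_\disc\zeta(u)(t+\tau)-\Pi_\disc\zeta(u)(t)\bigr)\bigl(\Pi_\disc\beta(u)(t+\tau)-\Pi_\disc\beta(u)(t)\bigr),
\]
so it is enough to bound the space-time integral $A$ of the right-hand side by $C(\tau+\dt)$.

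Next, I would decompose the jump of $\beta(u)$ along the time grid: setting $A(n,t)=\mathbf{1}_{\{t^{(n+1)}\in(t,t+\tau]\}}$, we have
\[
\Pi_\disc\beta(u)(t+\tau)-\Pi_\disc\beta(u)(t)
=\sum_{n=0}^{N-1}A(n,t)\,\dt^{(n+\half)}\,\Pi_\disc\delta_\disc^{(n+\half)}\beta(u).
\]
Plugging this into $A$ and using the definition \eqref{nstnormemel} of the dual semi-norm applied to $w=\delta_\disc^{(n+\half)}\beta(u)$ with the test function $\zeta(u^{(m+1)})-\zeta(u^{(m)})$ (for the appropriate index depending on $t$ and $t+\tau$), each pairing is controlled by
\[
\int_\O \Pi_\disc\bigl(\zeta(u)(t+\tau)-\zeta(u)(t)\bigr)\,\Pi_\disc\delta_\disc^{(n+\half)}\beta(u)\,\d\x
\le |\delta_\disc^{(n+\half)}\beta(u)|_{\star,\disc}\,\Vert\nabla_\disc\zeta(u)(t+\tau)-\nabla_\disc\zeta(u)(t)\Vert_{L^p(\O)^d}.
\]

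The final step is a double application of Hölder. In the $n$-sum, using the key combinatorial fact $\sum_n A(n,t)\dt^{(n+\half)}\le\tau+\dt$ (the telescoping length along covered time intervals can overshoot $\tau$ by at most one time step), I obtain
\[
\sum_n A(n,t)\dt^{(n+\half)}|\delta_\disc^{(n+\half)}\beta(u)|_{\star,\disc}
\le (\tau+\dt)^{1/p}\Bigl(\sum_n A(n,t)\dt^{(n+\half)}|\delta_\disc^{(n+\half)}\beta(u)|_{\star,\disc}^{p'}\Bigr)^{1/p'}.
\]
Integrating in $t$ and applying Hölder with exponents $p'$ and $p$, exchanging the order of summation and using $\int_0^{T-\tau}A(n,t)\d t\le\tau$, the first factor is controlled by $\tau^{1/p'}$ times the dual semi-norm estimate \eqref{nsestimlquatier}, while the second factor is controlled by the $L^p(\O\times(0,T))$ bound on $\nabla_\disc\zeta(u)$ from \eqref{estimhund}. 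Collecting terms yields $A\le C(\tau+\dt)^{1/p}\tau^{1/p'}\le C(\tau+\dt)$, which is the desired estimate.

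\medskip\noindent
The main obstacle is the bookkeeping in the middle step: properly handling the indicator $A(n,t)$, justifying the overshoot bound $\sum_n A(n,t)\dt^{(n+\half)}\le \tau+\dt$, and checking that the jump $\Pi_\disc\zeta(u)(t+\tau)-\Pi_\disc\zeta(u)(t)$ can indeed be represented via an element of $X_{\disc,0}$ so that the dual semi-norm estimate applies. Once this is set up, the Hölder manipulations and the combination with the previous a priori bounds are routine.
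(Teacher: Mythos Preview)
Your proposal is correct and follows essentially the same route as the paper: both start from \eqref{relchizetabeta}, telescope the jump of $\beta(u)$ along the time grid, and control each piece via the dual semi-norm \eqref{nstnormemel} against $\nabla_\disc\zeta(u)$. The only cosmetic difference is that the paper uses Young's inequality to split into three additive terms $\mathcal A_\tau+\mathcal A_0+\mathcal B$ and then invokes Lemma~\ref{estttt} for the combinatorial bounds, whereas you use two H\"older inequalities together with the direct estimates $\sum_n A(n,t)\dt^{(n+\half)}\le \tau+\dt$ and $\int_0^{T-\tau}A(n,t)\,\d t\le \tau$ (which are precisely the content of \eqref{esttt2} and \eqref{esttt1}); your packaging yields $C(\tau+\dt)^{1/p}\tau^{1/p'}\le C(\tau+\dt)$ in one stroke, which is slightly tidier but not materially different.
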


\begin{proof} 
Let $\tau \in (0,T)$. Thanks to \eqref{relchizetabeta},
we can write
\be
\int_{\O\times (0,T-\tau)} \Bigl(\Pi_\disc \nu(u)(\x,t+\tau)-\Pi_\disc \nu(u)(\x,t)\Bigr)^2 \d\x \d t
\le L_\beta  L_\zeta \int_0^{T-\tau} A(t) \d t,
\label{parnltrt1}\ee
where
$$
A(t) =  \int_\O  \Bigl(\Pi_\disc \zeta(u)(\x,t+\tau)-\Pi_\disc\zeta(u)(\x,t)\Bigr)
\Bigl(\Pi_\disc \beta(u)(\x,t+\tau)-\Pi_\disc \beta(u)(\x,t)\Bigr) \d\x.
$$
For $s\in (0,T)$, we define $n(s) \in \{0,\ldots,N-1\}$ such that $t^{(n(s))} < s \le t^{(n(s)+1)}$.
Taking $t\in(0,T-\tau)$, we may write
\[
A(t) =  \int_\O  \Bigl(\Pi_\disc \zeta(u^{(n(t+\tau)+1)})(\x)-\Pi_\disc\zeta(u^{(n(t)+1)})(\x)\Bigr)\Bigl(\sum_{n = n(t)+1}^{n(t+\tau)}\dt^{(n+\half)}\Pi_\disc\delta_{\disc}^{(n+\half)} \beta(u)(\x)\Bigr) \d\x.
\]
We then use the definition \eqref{nstnormemel} of the discrete dual semi-norm to infer
\begin{equation}
A(t) \le  \sum_{n = n(t)+1}^{n(t+\tau)}\dt^{(n+\half)}
\left|\left|\nabla_\disc \left[ \zeta(u^{(n(t+\tau)+1)})-\zeta(u^{(n(t)+1)})\right]\right|\right|_{L^p(\O)^d}
|\delta_{\disc}^{(n+\half)} \beta(u)|_{\star,\disc}.
\label{parnltrt4a}\end{equation}
We apply the triangular inequality on the first norm in this right-hand side,
Young's inequality and we integrate over $t\in (0,T-\tau)$ to get
\begin{equation}\label{parnltrt-new}
\int_0^{T-\tau}A(t)\d t\le \mathcal{A}_\tau + \mathcal{A}_0+\mathcal{B}
\end{equation}
with, for $s=0$ or $s=\tau$,
\begin{equation}\label{parnltrt-new2}
\mathcal{A}_s=\frac{1}{p}\int_0^{T-\tau} \sum_{n = n(t)+1}^{n(t+\tau)}\dt^{(n+\half)}
||\nabla_\disc \zeta(u^{(n(t+s)+1)})||_{L^p(\O)^d}^p\d t\le \frac{\cter{estimld}^p}{p}(\tau+\dt)
\end{equation}
and
\begin{equation}\label{parnltrt-new1}
\mathcal{B}= \frac{2}{p'}\int_0^{T-\tau}  \sum_{n = n(t)+1}^{n(t+\tau)}\dt^{(n+\half)}
|\delta_{\disc}^{(n+\half)} \beta(u)|_{\star,\disc}^{p'}\d t\le \frac{2\cter{estimdt}}{p'}\tau.
\end{equation}
In \eqref{parnltrt-new2}, the quantity $\mathcal{A}_s$ has been estimated by using \eqref{esttt2} in Lemma \ref{estttt} and
the estimate on $\nabla_\disc \zeta(u)$ in \eqref{estimhund}. In \eqref{parnltrt-new1}, $\mathcal{B}$ has been estimated by applying \eqref{esttt1} in Lemma \ref{estttt} and by
using the bound \eqref{nsestimlquatier} on $\int_0^T \vert \delta_{\disc}  \beta(u)(t)\vert_{\star,\disc}^{p'}\d t$.
The proof is completed by gathering \eqref{parnltrt1}, \eqref{parnltrt-new},
\eqref{parnltrt-new2} and \eqref{parnltrt-new1}.
\end{proof}

\subsection{Proof of Theorem \ref{th:weakconv}}

{\bf Step 1} Application of compactness results.

Thanks to Theorem \ref{unifweakGDcomp} and Estimates \eqref{estimhund} and \eqref{nsestimlquatier}, we first extract a subsequence such that $(\Pi_{\disc_m}\beta(u_m))_{m\in\N}$ converges weakly in $L^2(\O)$
uniformly on $[0,T]$ (in the sense of Definition \ref{defweakunifconv}) to some function 
$\overline{\beta}\in C([0,T];L^2(\O)\weak)$ which satisfies $\overline{\beta}(\cdot,0) = \beta(u_{\rm ini})$ in $L^2(\O)$. Using again Estimates \eqref{estimhund} and applying Lemma \ref{lem:reglim},
we extract a further subsequence such that, for some $\overline{\zeta}\in L^p(0,T;W^{1,p}_0(\O))$,
$\Pi_{\disc_m}\zeta(u_m)\to\overline{\zeta}$ weakly in $L^p(\O\times (0,T))$ and $\nabla_{\disc_m}\zeta(u_m)
\to \nabla \overline{\zeta}$ weakly in $L^p(\O\times(0,T))^d$.
Estimates \eqref{estimhund}, Definition \ref{def-coer} and the growth
assumption \eqref{hypzeta} on $\zeta$ show that $(\Pi_{\disc_m}u_m)_{m\in\N}$
is bounded in $L^p(\O\times (0,T))$ and we can therefore assume, up to a subsequence,
that it converges weakly to some $\bu$ in this space.

We then prove, by means of the Kolmogorov theorem, that $(\Pi_{\disc_m}\nu(u_m))_{m\in\N}$
is relatively compact in $L^1(\O\times(0,T))$.
We first remark that $|\nu(a) - \nu(b)|\le L_\beta |\zeta(a) - \zeta(b)|$,
which implies, using Estimate \eqref{estimhund} and Definition \ref{def-comp} with $v=\zeta(u_m)$,
\begin{equation}\label{spacetransnu}
||\Pi_{\disc_m} \nu(u_m)(\cdot+\bxi,\cdot)-\Pi_{\disc_m} \nu(u_m)(\cdot,\cdot)||_{L^p(\R^d\times(0,T))} \le
L_\beta \cter{estimld}T_{\disc_m}(\bxi) 
\end{equation}
where $\Pi_{\disc_m} \nu(u_m)$ has been extended by $0$ outside $\O$,
and $\lim_{\bxi\to 0}\sup_{m\in\N}T_{\disc_m}(\bxi)=0$. This takes care of the
space translates. Let us now turn to the time translates.
Invoking Lemma \ref{esttimetransl} and, to control the time translates at both ends of $[0,T]$,
the fact that $\Pi_{\disc_m}\beta(u_m)$ -- and therefore also $\Pi_{\disc_m} \nu(u_m)$ since $|\nu|\le L_{\zeta}|\beta|$ -- remains bounded in $L^\infty(0,T; L^2(\O))$, we can write for any $M\in\N$
\begin{multline}\label{timetransnew}
\sup_{m\in\N}||\Pi_{\disc_m}\nu(u_m)(\cdot,\cdot+\tau)-\Pi_{\disc_m}\nu(u_m)(\cdot,\cdot)||_{L^2(\O\times(0,T))}^2\\
\le\max\left(
\max_{m\le M}||\Pi_{\disc_m}\nu(u_m)(\cdot,\cdot+\tau)-\Pi_{\disc_m}\nu(u_m)(\cdot,\cdot)||_{L^2(\O\times(0,T))}^2;
\cter{Cautre}\left(\tau+\sup_{m>M}\dt_m\right)\right),
\end{multline}
where $\ctel{Cautre}$ does not depend on $m$ or $\tau$, and the functions have been extended
by $0$ outside $(0,T)$.
Since each $||\Pi_{\disc_m}\nu(u_m)(\cdot,\cdot+\tau)-\Pi_{\disc_m}\nu(u_m)||_{L^2(\O\times(0,T))}^2$ tends to $0$ as $\tau\to 0$
and since $\dt_m\to 0$ as $m\to\infty$, taking in that order the limsup as $\tau\to 0$ and the limit as $M\to\infty$
of \eqref{timetransnew} shows that the left-hand side of this inequality tends to $0$
as $\tau \to 0$, as required.
Hence, Kolmogorov's theorem shows that, up to extraction of another subsequence,
$\Pi_{\disc_m}\nu(u_m)\to\overline{\nu}$ in $L^1(\O\times(0,T))$.

Let us now identify these limits $\overline{\beta}$, $\overline{\zeta}$ and $\overline{\nu}$.
Under the first case in the structural hypothesis \eqref{hyp:struct}, we have
$\beta={\rm Id}$, and therefore $\overline{\beta}=\bu=\beta(\bu)$
and $\nu=\zeta$. The strong convergence of $\Pi_{\disc_m}\nu(u_m)=
\Pi_{\disc_m}\zeta(u_m)$ to $\overline{\nu}=\overline{\zeta}$ allows us to apply
Lemma \ref{lem-minty} to see that $\overline{\zeta}=\zeta(\bu)$ and $\overline{\nu}=\nu(\bu)$.
Exchanging the roles of $\beta$ and $\zeta$, we see that $\overline{\beta}=\beta(\bu)$,
$\overline{\zeta}=\zeta(\bu)$ and $\overline{\nu}=\nu(\bu)$ still hold in the second case of
\eqref{hyp:struct}. We notice that this is the only place where we use this structural assumption \eqref{hyp:struct}
on $\beta,\zeta$.

Using the growth assumption \eqref{hypanl4} on $\bfa$ and Estimates \eqref{estimhund},
upon extraction of another subsequence we can also assume that
$\bfa\left(\cdot, \Pi_{\disc_m} \nu(u_m),\grad_{\disc_m} \zeta(u_m)\right)$
has a weak limit in $L^{p'}(\O\times(0,T))^d$, which we denote
by $\bfA$.

Finally, for any $T_0\in [0,T]$, since $\Pi_{\disc_m}\beta(u_m(\cdot,T_0))\to
\beta(\bu)(\cdot,T_0)$ weakly in $L^2(\O)$, Lemma \ref{lem:Zconv} gives
\be
\int_\O B(\beta(\bu)(\x,T_0))\d\x \le \liminf_{m\to\infty}\int_\O B(\beta(\Pi_{\disc_m} u_m)(\x,T_0))\d\x.
\label{minorliminf}\ee
With \eqref{estimhund}, this shows that $B(\beta(\bu))\in L^\infty(0,T;L^1(\O))$.

\medskip

{\bf Step 2} Passing to the limit in the scheme.

We drop the indices $m$ for legibility reasons.
Let $\varphi\in C^1_c(-\infty, T)$ and let $w\in W^{1,p}_0(\O)\cap L^2(\O)$. 
We introduce $v=(\varphi(t^{(n-1)})P_{\disc} w)_{n=1,\ldots,N}$ as a test function in \eqref{ellgenfdiscllt},
with $P_{\disc}$ defined by \eqref{def-PD}.
We get $T_1^{(m)} + T_2^{(m)} = T_3^{(m)}$ with 
\[
T_1^{(m)} = \sum_{n=0}^{N-1} \varphi(t^{(n)})\dt^{(n+\half)}\int_\O \Pi_\disc \delta_{\disc}^{(n+\half)} \beta(u)(\x) \Pi_\disc P_{\disc}  w(\x)\d\x,
\]
\[
T_2^{(m)} = \sum_{n=0}^{N-1} \varphi(t^{(n)})\dt^{(n+\half)} \int_\O \bfa\left(\x, \Pi_\disc \nu(u^{(n+1)}),\grad_\disc \zeta( u^{(n+1)})(\x)\right)\cdot\grad_\disc P_{\disc}   w(\x)\d\x,
\]
and
\[
T_3^{(m)} =  \sum_{n=0}^{N-1} \varphi(t^{(n)}) \int_{t^{(n)}}^{t^{(n+1)}}\int_\O f(\x,t)
\Pi_\disc P_{\disc}w(\x) \d\x\d t.
\]
Using discrete integrate-by-parts to transform the terms
$\varphi(t^{(n)}) (\Pi_\disc \beta(u^{(n+1)})-\Pi_\disc \beta(u^{(n)}))$
appearing in $T_1^{(m)}$ into $(\varphi(t^{(n)})-\varphi(t^{(n+1)}))\Pi_\disc \beta(u^{(n+1)})$,
we have
\[
T_1^{(m)} =  \dsp- \int_0^T\varphi'(t) \int_\O \Pi_\disc \beta(u)(\x,t)
\Pi_\disc P_{\disc} w(\x)\d\x\d t
- \varphi(0)\int_\O \Pi_\disc \beta(u^{(0)})(\x) \Pi_\disc P_{\disc}w(\x)\d\x.
\]
Setting $\varphi_\disc(t) = \varphi(t^{(n)})$ for $t\in (t^{(n)},t^{(n+1)})$, we have
\[\ba\dsp
T_2^{(m)} = \int_0^T\varphi_\disc(t) \int_\O  \bfa\left(\x, \Pi_\disc \nu(u)(\x,t),\grad_\disc \zeta(u)(\x,t)\right)\cdot\grad_\disc P_{\disc}w(\x)\d\x\d t
\\ \dsp
T_3^{(m)} =  \int_0^T\varphi_\disc(t)\int_\O f(\x,t)  \Pi_\disc P_{\disc}w(\x) \d\x\d t.\ea
\]
Since $\varphi_\disc\to \varphi$ uniformly on $[0,T]$, $\Pi_\disc P_\disc w\to w$
in $L^p(\O)\cap L^2(\O)$ and $\nabla_\disc P_\disc w\to \nabla w$ in $L^p(\O)^d$,
we may let $m\to\infty$ in $T_1^{(m)} + T_2^{(m)} = T_3^{(m)}$
to see that $\ubarre$ satisfies
\be\left\{\ba
\ubarre \in L^p(\O\times(0,T))\,,\;\zeta(\bu)\in L^p(0,T;W^{1,p}_0(\O))\,,\;
B(\beta(\bu))\in L^\infty(0,T;L^1(\O)),\\
\beta(\bu)\in C([0,T];L^2(\O)\weak)\,,\; \beta(\bu)(\cdot,0)=\beta(u_{\rm ini})\,,\\
\dsp - \int_0^T \varphi'(t)\int_\O \beta(\ubarre(\x,t)) w(\x)\d\x\d t -\varphi(0)\int_\O \beta(u_{\rm ini}(\x))w(\x)\d\x \\[1em]
\dsp 
+\int_0^T \varphi(t)\int_\O \bfA(\x,t)\cdot\grad w(\x)
\d\x\d t = \int_0^T \varphi(t)\int_\O f(\x,t) w(\x) \d\x\d t,\\
\qquad\forall w \in W^{1,p}_0(\O)\cap L^2(\O),\ \forall \varphi\in C^\infty_c(-\infty,T).
\ea\right.\label{ellgenflltdu}\ee
Note that the regularity properties on $\bu$, $\zeta(\bu)$, $\beta(\bu)$ and $B(\beta(\bu))$
have been established in Step~1.
Linear combinations of this relation show that \eqref{ellgenflltdu} also holds with $\varphi(t)w(\x)$ replaced
by a tensorial functions in $C^\infty_c(\O\times (0,T))$.
This proves that $\partial_t\beta(\ubarre)\in L^{p'}(0,T;W^{-1,p'}(\O))$
(see Remark \ref{rem-defdt_u}).
Using the density of tensorial functions in $L^p(0,T;W^{1,p}_0(\O))$ \cite{poly}, we then see that
$\ubarre$ satisfies
\be\ba
\dsp \int_0^T \langle \partial_t\beta(\ubarre)(\cdot,t), \vbarre(\cdot,t)\rangle_{W^{-1,p'},W^{1,p}_0}\d t  \\ 
\dsp  +  
\int_0^T \int_\O \bfA(\x,t)\cdot\grad \vbarre(\x,t)
\d\x\d t = \int_0^T \int_\O f(\x,t) \vbarre(\x,t) \d\x\d t\,,\quad
\forall \vbarre \in L^p(0,T;W^{1,p}_0(\O)).
\ea\label{ellgenflltdd}\ee

\medskip

{\bf Step 3} Proof that $ \ubarre$ is a solution to  \eqref{ellgenfllt}.

\medskip

It only remains to show that
\be
\bfA(\x,t) = \bfa(\x,\nu(\ubarre)(\x,t),\grad\zeta(\ubarre)(\x,t))\hbox{ for a.e. }(\x,t)\in \O\times(0,T).
\label{resumintyt}\ee
We take $T_0\in [0,T]$, write \eqref{app:ipp} with $\disc=\disc_m$ and take the
$\limsup$ as $m\to\infty$. We notice that the $t^{(k)}=:T_m$ from Lemma \ref{estimldlp}
converges to $T_0$ as $m\to\infty$. Hence, by using the convergence $\Pi_{\disc_m}
\interp_{\disc_m}u_{\rm ini}\to u_{\rm ini}$ in $L^2(\O)$ (consistency of $(\disc_m)_{m\in\N}$),
and the continuity and quadratic growth of $B\circ\beta$ (upper bound in
\eqref{growthB}), we obtain
\begin{multline}
\limsup_{m\to\infty}\int_0^{T_0}\int_\O \bfa(\x,\Pi_{\disc_m}\nu(u_m)(\x,t),\nabla_{\disc_m} \zeta(u_m)(\x,t))\cdot\nabla_{\disc_m} \zeta(u_m)(\x,t)\d\x\d t\\
\le \int_\O B( \beta(u_{\rm ini})(\x))\d\x
+\int_0^{T_0}\int_\O f(\x,t) \zeta(\bu)(\x,t)\d\x\d t 
-\liminf_{m\to\infty}\int_\O B(\beta(\Pi_{\disc_m} u_m)(\x,T_0))\d\x.
\label{ineqpasslim}\end{multline}
We take $\overline{v} = \zeta(\bu)\mathbf{1}_{[0,T_0]}$ in \eqref{ellgenflltdd} and apply Lemma \ref{ricipp} to get
\[
 \ba
\dsp \int_\O B(\beta(\bu)(\x,T_0))\d\x  -  \int_\O B(\beta(\bu)(\x,0))\d\x  \\ 
\dsp  +  
\int_0^{T_0} \int_\O \bfA(\x,t)\cdot\grad \zeta(\bu)(\x,t)
\d\x\d t = \int_0^{T_0} \int_\O f(\x,t) \zeta(\bu)(\x,t) \d\x\d t.
\ea
\]
This relation, combined with \eqref{ineqpasslim} and using \eqref{minorliminf}, shows that
\begin{multline}
 \limsup_{m\to\infty}\int_0^{T_0}\int_\O \bfa(\x,\Pi_{\disc_m}\nu(u_m)(\x,t),\nabla_{\disc_m} \zeta(u_m)(\x,t))\cdot\nabla_{\disc_m} \zeta(u_m)(\x,t)\d\x\d t
\\ \le \int_0^{T_0} \int_\O \bfA(\x,t)\cdot\grad \zeta(\bu)(\x,t)
\d\x\d t.
\label{ineqpminty}\end{multline}

It is now possible to apply Minty's trick. Consider for $\bG\in L^p(\O\times(0,T))^d$ the following
relation, stemming from the monotony \eqref{hypanl3} of $\bfa$:
\begin{equation}
\int_0^{T_0} \int_\O \left[\bfa(\cdot,\Pi_{\disc_m} \nu(u_m),\grad_{\disc_m} \zeta(u_m)) - 
\bfa(\cdot,\Pi_{\disc_m} \nu(u_m),\bG)\right] 
\cdot\left[\grad_{\disc_m} \zeta(u_m)-\bG\right]\d\x\d t \ge 0.
\label{ineqmonot}\end{equation}
By strong convergence of
$\Pi_{\disc_m}\nu(u_m)$ to $\nu(\bu)$ in $L^1(\O\times(0,T))$
and Assumptions \eqref{hypanl1}, \eqref{hypanl4} on $\bfa$,
we see that $\bfa(\cdot,\Pi_{\disc_m}\nu(u_m),\bG)\to \bfa(\cdot,\nu(\bu),\bG)$
strongly in $L^{p'}(\O\times (0,T))^d$. The development of \eqref{ineqmonot}
gives a sum of four terms, the first one being the integral in the left-hand side of \eqref{ineqpminty}
and the other three being integrals of products of weakly and strongly converging sequences.
We can thus take the $\limsup$ of \eqref{ineqmonot} with $T_0=T$ to find
\[
\int_0^{T} \int_\O \left[\bfA(\x,t) - \bfa(\x,\nu(\ubarre)(\x,t),\bG(\x,t))\right]
\cdot\left[\grad\zeta(\ubarre)(\x,t)-\bG(\x,t)\right]\d\x\d t \ge 0.
\]
Application of Minty's method \cite{min-63-mon} (i.e. taking $\bG=\nabla\zeta(\bu)+r\bvarphi$ for
$\bvarphi\in L^p(\O\times(0,T))^d$ and letting $r\to 0$)
 then shows
that \eqref{resumintyt} holds and concludes the proof that $\ubarre$ satisfies \eqref{ellgenfllt}.

\subsection{Proof of Theorem \ref{th:uniftime}}

Let $T_0\in [0,T]$ and $(T_m)_{m\ge 1}$ be a sequence in $[0,T]$
that converges to $T_0$. 
By setting $T_0=T_m$ and $\bG = \nabla \zeta(\bu)$ in the developed
form of \eqref{ineqmonot}, by taking the infimum limit (thanks to the strong convergence of
$\bfa(\cdot,\Pi_{\disc_m}\nu(u_m),\nabla\zeta(\bu))$) and by using \eqref{resumintyt},
we find
\begin{multline}
 \liminf_{m\to\infty}\int_0^{T_m}\int_\O \bfa(\x,\Pi_{\disc_m}\nu(u_m)(\x,t),\nabla_{\disc_m} \zeta(u_m)(\x,t))\cdot\nabla_{\disc_m} \zeta(u_m)(\x,t)\d\x\d t
\\ \ge \int_0^{T_0} \int_\O \bfa(\x,\nu(\ubarre)(\x,t),\grad\zeta(\ubarre)(\x,t))\cdot\grad \zeta(\bu)(\x,t)
\d\x\d t.
\label{eq:limtermembetant}\end{multline}
We then write \eqref{app:ipp} with $T_m$ instead of $T_0$ and we take the
$\limsup$ as $m\to\infty$. We notice that the $t^{(k)}$
such that $T_m\in (t^{(k-1)},t^{(k)}]$ converges to $T_0$ as $m\to\infty$.
Thanks to \eqref{eq:limtermembetant} and \eqref{form:ippsol} we obtain
\be\label{sufficient}
\limsup_{m\to\infty}\int_\O B({\beta}(\Pi_{\disc_m} u_m(\x,T_m))) \d\x
\le \int_\O B(\beta(\bu)(\x,T_0)) \d \x.
\ee
By Lemma \ref{equiv-unifconv}, the uniform-in-time weak convergence
of $\beta(\Pi_{\disc_m}u_m)$ to $\beta(\bar u)$ and the continuity of
$\beta(\bar u):[0,T]\to L^2(\O)\weak$, we have $\beta(\Pi_{\disc_m}u_m)(T_m)\to
\beta(\bar u)(T_0)$ weakly in $L^2(\O)$ as $m\to\infty$. Therefore,
for any $(s_m)_{m\in\N}$ converging to $T_0$,
$\frac{1}{2}({\beta}(\Pi_{\disc_m} u_m(T_m))+\beta(\bu)(s_m))\to
\beta(\bu)(T_0)$ weakly in $L^2(\O)$ as $m\to\infty$ and
Lemma \ref{lem:Zconv} gives, by convexity of $B$,
\be
\label{limbetatildemoy}
\int_\O B(\beta(\bu)(\x,T_0)) \d \x
\le
\liminf_{m\to\infty}\int_\O B\left(\frac{{\beta}(\Pi_{\disc_m} u_m(\x,T_m))+\beta(\bu)(\x,s_m)} 2\right)
\d\x.
\ee

Property \eqref{Bunifconv} of $B$ and the two inequalities \eqref{sufficient} and \eqref{limbetatildemoy}
 allow us to conclude the proof.
Let $(s_m)_{m\in\N}$ be a sequence in $\mathcal T$ (see proof of Corollary \ref{ric:bou})
that converges to $T_0$. Then $\nu(\bu(\cdot,s_m))\to \nu(\bu)(\cdot,T_0)$ in $L^2(\O)$
as $m\to\infty$.
Using \eqref{Bunifconv}, we get
\begin{multline*}
\Vert {\nu}(\Pi_{\disc_m} u_m(\cdot,T_m)) - \nu(\bu)(\cdot,T_0)\Vert_{L^2(\O)}^2 \\
\le 2\Vert {\nu}(\Pi_{\disc_m} u_m(\cdot,T_m)) - \nu(\bu(\cdot,s_m))\Vert_{L^2(\O)}^2
+2\Vert {\nu}(\bu(\cdot,s_m)) - \nu(\bu)(\cdot,T_0)\Vert_{L^2(\O)}^2\\
\le  8 L_\beta L_\zeta\int_\O \left[B({\beta}(\Pi_{\disc_m} u_m(\x,T_m))) + B(\beta(\bu(\x,s_m)))\right]\d\x\\
-16L_\beta L_\zeta\int_\O B\left(\frac{{\beta}(\Pi_{\disc_m} u_m(\x,T_m))+\beta(\bu(\x,s_m))} 2\right)\d\x\\
+2\Vert {\nu}(\bu(\cdot,s_m)) - \nu(\bu)(\cdot,T_0)\Vert_{L^2(\O)}^2.
\end{multline*}
We then take the $\limsup$ as $m\to\infty$ of this expression.
Thanks to \eqref{sufficient} and the continuity of $t\in [0,T]\mapsto 
\int_\O B(\beta(\bu)(\x,t))\d\x\in [0,\infty)$
(see Corollary \ref{ric:bou}), the first term in the right-hand side has a finite
$\limsup$, bounded above by $16L_\beta L_\zeta \int_\O B(\beta(\bu)(\x,T_0))\d\x$.
We can therefore split the $\limsup$
of this right-hand side without risking writing $\infty-\infty$ and we get, thanks to \eqref{limbetatildemoy},
\[
\limsup_{m\to\infty}\Vert {\nu}(\Pi_{\disc_m} u_m(\cdot,T_m)) - \nu(\bu)(\cdot,T_0)\Vert_{L^2(\O)}^2 \le 0.
\]
Thus, $\nu(\Pi_{\disc_m}u_m(\cdot,T_m))\to \nu(\bu)(T_0)$ strongly in $L^2(\O)$. By Lemma
\ref{equiv-unifconv} and the continuity of $\nu(\bu):[0,T]\mapsto L^2(\O)$
stated in Corollary \ref{ric:bou}, this concludes 
the proof of the convergence of $\nu(\Pi_{\disc_m}u_m)$ to $\nu(\bu)$ in $L^\infty(0,T;L^2(\O))$.

\begin{remark}
Since $\beta(\Pi_{\disc_m}u_m)(T_m)\to \beta(\bar u)(T_0)$ weakly in $L^2(\O)$ as $m\to\infty$,
Lemma \ref{lem:Zconv} shows that 
$\int_\O B(\beta(\bu)(\x,T_0))\d\x \le \liminf_{m\to\infty}\int_\O B(\beta(\Pi_{\disc_m} u_m)(\x,T_m))\d\x$.
Combined with \eqref{sufficient}, this gives
\be\label{limbetatilde-bis}
\lim_{m\to\infty}\int_\O B({\beta}(\Pi_{\disc_m} u_m(\x,T_m))) \d\x
=\int_\O B(\beta(\bu)(\x,T_0)) \d \x.
\ee
Item 1 in Corollary \ref{ric:bou} and Lemma \ref{equiv-unifconv} therefore show that
the functions $\int_\O B(\beta(\Pi_{\disc_m}u_m(\x,\cdot)))\d\x$
converges uniformly on $[0,T]$ to $\int_\O B(\beta(\bu)(\x,\cdot))\d\x$.
\end{remark}

\subsection{Proof of Theorem \ref{th:strgrad}}

By taking the $\limsup$ as $m\to\infty$ of \eqref{app:ipp} for $u_m$ with $T_0=T$,
and by using \eqref{limbetatilde-bis} (with $T_m\equiv T$) and the continuous integration-by-parts
formula \eqref{form:ippsol}, we find
\begin{multline*}
 \limsup_{m\to\infty}\int_0^{T}\int_\O \bfa(\x,\Pi_{\disc_m}\nu(u_m)(\x,t),\nabla_{\disc_m} \zeta(u_m)(\x,t))\cdot\nabla_{\disc_m} \zeta(u_m)(\x,t)\d\x\d t\\
\le \int_0^{T_0} \int_\O \bfa(\x,\nu(\ubarre)(\x,t),\grad\zeta(\ubarre)(\x,t))\cdot\grad \zeta(\bu)(\x,t)
\d\x\d t.
\end{multline*}
Combined with \eqref{eq:limtermembetant}, this shows that
\begin{multline}\label{cvaD}
\lim_{m\to\infty}\int_0^{T}\int_\O \bfa(\x,\Pi_{\disc_m}\nu(u_m)(\x,t),\nabla_{\disc_m} \zeta(u_m)(\x,t))\cdot\nabla_{\disc_m} \zeta(u_m)(\x,t)\d\x\d t\\
= \int_0^{T_0} \int_\O \bfa(\x,\nu(\ubarre)(\x,t),\grad\zeta(\ubarre)(\x,t))\cdot\grad \zeta(\bu)(\x,t)
\d\x\d t.
\end{multline}
Let us define
\[
f_m=\left[\bfa(\x,\Pi_{\disc_m} \nu(u_m),\grad_{\disc_m} \zeta(u_m)) -
\bfa(\x,\Pi_{\disc_m} \nu(u_m)(\cdot,t),\nabla\zeta(\bu))\right]
\cdot \left[\grad_{\disc_m} \zeta(u_m)-\nabla\zeta(\bu)\right]\ge 0.
\]
By developing this expression and using \eqref{cvaD}, \eqref{resumintyt} and \eqref{conv:base},
we see that $\int_0^T\int_\O f_m(\x,t)\d\x\d t\to 0$ as $m\to\infty$.
This shows that $f_m\to 0$ in $L^1(\O\times (0,T))$ and therefore
a.e. up to a subsequence. We can then reason as in \cite{dro-12-gra},
using the strict monotony \eqref{a:strictmon} of $\bfa$,
the coercivity assumption \eqref{hypanl2} and Vitali's theorem,
to deduce that $\nabla_{\disc_m}\zeta(u_m)\to \nabla\zeta(\bu)$
strongly in $L^p(\O\times(0,T))^d$ as $m\to\infty$.

\section{Removal of the assumption ``$\beta={\rm Id}$ or $\zeta={\rm Id}$''}\label{sec:hypstruct}

We show here that all previous results are actually true
without the structural assumption \eqref{hyp:struct} -- i.e. without assuming that $\beta={\rm Id}$
or $\zeta={\rm Id}$ -- provided that the range of $p$ is slightly restricted.
The main theorem in this section is the following convergence result.

\begin{theorem}\label{th:weaknonstruct}
Under Assumptions \eqref{hypgnl}, let $(\disc_m)_{m\in\N}$ be a sequence of
space-time gradient discretisations, in the sense of Definition \ref{def-stcons},
that is coercive, consistent, limit-conforming and compact (see Section \ref{sec:propgs}).
Let, for any $m\in\N$, $u_m$ be a solution to \eqref{ellgenfdiscllt} with $\disc=\disc_m$,
provided by Theorem \ref{th:weakconv}.

If $p\ge 2$ then there exists a solution $\bu$ to \eqref{ellgenfllt} such that, up to a subsequence,
\begin{itemize}
\item the convergences in \eqref{conv:base} hold,
\item $\Pi_{\disc_m}\nu(u_m)\to \nu(\bu)$ strongly in $L^\infty(0,T;L^2(\O))$ as $m\to\infty$,
\item under the strict monotony assumption on $\bfa$ (i.e. \eqref{a:strictmon}),
as $m\to \infty$ we have $\Pi_{\disc_m}\zeta(u_m)\to \zeta(\bu)$ strongly
in $L^p(\O\times(0,T))$ and $\nabla_{\disc_m}\zeta(u_m)\to \nabla\zeta(\bu)$
strongly in $L^p(\O\times(0,T))^d$.
\end{itemize}
\end{theorem}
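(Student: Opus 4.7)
My plan is to reuse the proof of Theorem \ref{th:weakconv} almost verbatim and to intervene only where the structural assumption \eqref{hyp:struct} was invoked, namely at the single identification of $\overline{\beta},\overline{\zeta},\overline{\nu}$ with $\beta(\bu),\zeta(\bu),\nu(\bu)$. Up to that point, Step 1 of the proof of Theorem \ref{th:weakconv} produces, without relying on \eqref{hyp:struct}, the weak limits $\overline{\beta}\in C([0,T];L^2(\O)\weak)$, $\overline{\zeta}\in L^p(0,T;W^{1,p}_0(\O))$, $\bfA\in L^{p'}$, as well as the strong $L^1$ limit $\overline{\nu}$ of $\Pi_{\disc_m}\nu(u_m)$ with a.e.\ convergence along a further subsequence. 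Once these identifications are restored under the hypothesis $p\ge 2$, the remainders of the proofs of Theorems \ref{th:weakconv}, \ref{th:uniftime} and \ref{th:strgrad} proceed unchanged: they invoke only the convergences \eqref{conv:base} and the fact that $\bu$ solves \eqref{ellgenfllt}, not any special form of $\beta$ or $\zeta$. In particular the uniform-in-time strong $L^2$-convergence of $\nu$ and the strong gradient convergence under \eqref{a:strictmon} will follow automatically.

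The key new ingredient is the discrete compensated compactness theorem (Theorem \ref{thm-comp-comp}) applied to the pair $(\Pi_{\disc_m}\beta(u_m),\Pi_{\disc_m}\zeta(u_m))$. I would use the estimates already in hand: Lemmas \ref{estimldlp} and \ref{estl1l2snstdt} give a uniform $L^\infty(0,T;L^2(\O))$ bound on $\Pi_{\disc_m}\beta(u_m)$ together with a uniform $L^{p'}$-in-time bound on its discrete dual semi-norm time-derivative, while $\Pi_{\disc_m}\zeta(u_m)$ is uniformly bounded in the discrete $L^p(0,T;W^{1,p}_0(\O))$ sense. The restriction $p\ge 2$ (equivalently $p'\le 2$) makes the two factors integrability-compatible: $\beta(u_m)\in L^\infty(0,T;L^2(\O))\subset L^{p'}(\O\times(0,T))$ pairs with $\zeta(u_m)\in L^p(\O\times(0,T))$ via $W^{1,p}_0\hookrightarrow L^p$, so the product integral is a legitimate Hölder pairing. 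The compensated compactness theorem then delivers
\[
\lim_{m\to\infty}\int_0^T\!\!\int_\O \Pi_{\disc_m}\beta(u_m)\,\Pi_{\disc_m}\zeta(u_m)\,\d\x\,\d t=\int_0^T\!\!\int_\O \overline{\beta}\,\overline{\zeta}\,\d\x\,\d t.
\]

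Armed with this product limit, I would run a two-function (bi-monotone) Minty argument in place of Lemma \ref{lem-minty}. For any $\varphi\in L^\infty(\O\times(0,T))$, the monotonicity of $\beta$ and $\zeta$ gives
\[
0\le \int_0^T\!\!\int_\O\bigl(\Pi_{\disc_m}\beta(u_m)-\beta(\varphi)\bigr)\bigl(\Pi_{\disc_m}\zeta(u_m)-\zeta(\varphi)\bigr)\,\d\x\,\d t;
\]
passing to the limit (weak--strong for the two cross terms, compensated compactness for the nonlinear diagonal term, fixed for the $\varphi$-only term) yields the same inequality with $(\overline{\beta},\overline{\zeta})$ in place of the discrete quantities. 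Localising by taking $\varphi$ constant on arbitrary measurable sets and varying the constant over a countable dense subset of $\R$, I would extract the pointwise bound $(\overline{\beta}(\x,t)-\beta(k))(\overline{\zeta}(\x,t)-\zeta(k))\ge 0$ for a.e.\ $(\x,t)$ and every $k\in\R$. The monotonicity and continuity of $\beta,\zeta$ then produce a measurable selection $\bu$ with $\beta(\bu)=\overline{\beta}$ and $\zeta(\bu)=\overline{\zeta}$ a.e. The identity $\nu'=\beta'\zeta'$ forces $\nu$ to be constant on any interval where either $\beta$ or $\zeta$ is constant, so $\nu(\bu)$ is independent of the selection on such intervals, and this intrinsic value must coincide with the a.e.\ strong limit $\overline{\nu}$ of $\Pi_{\disc_m}\nu(u_m)=\nu(\Pi_{\disc_m}u_m)$.

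I expect the hardest step to be the discrete compensated compactness theorem itself, which must reconcile one-sided discrete regularity (time-control of $\beta(u_m)$ via $|\cdot|_{\star,\disc}$, space-control of $\zeta(u_m)$ via $\nabla_\disc$) in a framework lacking an exact discrete product rule, and handle the nonlocality of the dual semi-norm when transferring time-increments onto space-test-functions. By comparison, the bi-Minty measurable-selection step is a routine monotonicity exercise once the pointwise inequality is secured, and re-running the original proofs of Theorems \ref{th:weakconv}, \ref{th:uniftime} and \ref{th:strgrad} with the identified limits is immediate.
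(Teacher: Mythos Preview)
Your strategy matches the paper's proof: the only issue is the identification of $\overline{\beta},\overline{\zeta},\overline{\nu}$, and the core tool is the discrete compensated compactness theorem (Theorem~\ref{thm-comp-comp}) applied to the pair $(\Pi_{\disc_m}\beta(u_m),\Pi_{\disc_m}\zeta(u_m))$, followed by a Minty-type argument. Your pointwise maximal-monotone localisation is equivalent to the paper's Lemma~\ref{lem:mintylike}; see also Remark~\ref{rem:maxmongraph}, which is precisely the observation that the pointwise inequality $(\overline{\beta}-\beta(k))(\overline{\zeta}-\zeta(k))\ge 0$ for all $k$ forces $(\overline{\zeta},\overline{\beta})$ into the graph $\{(\zeta(s),\beta(s)):s\in\R\}$.

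There is one small gap. Your last sentence, ``this intrinsic value must coincide with the a.e.\ strong limit $\overline{\nu}$'', is not a proof: you have no a.e.\ convergence of $\Pi_{\disc_m}u_m$ itself, and the fact that $\nu(\bu)$ is independent of the selection does not by itself connect it to $\overline{\nu}$. The paper fills this by running a \emph{second} Minty-type argument, this time on the pair $(\mu,\nu):=(\beta+\zeta,\nu)$: since $\Pi_{\disc_m}\nu(u_m)\to\overline{\nu}$ strongly in $L^2(\O\times(0,T))$ (here $p\ge 2$ is used, via the Kolmogorov argument on the space and time translates) and $\mu(\Pi_{\disc_m}u_m)\to\overline{\mu}$ weakly, the product converges by strong--weak pairing without any compensated compactness, and Lemma~\ref{lem:mintylike} applied to $(\mu,\nu)$ gives $\overline{\nu}=\nu(\bu)$ and $\overline{\mu}=\mu(\bu)$ for any $\bu$ with $(\mu+\nu)(\bu)=\overline{\mu}+\overline{\nu}$. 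In your formulation you can recover this just as easily: run your bi-Minty localisation on the non-decreasing pair $(\mu,\nu)$ to obtain $(\mu(\bu)-\mu(k))(\overline{\nu}-\nu(k))\ge 0$ for all $k$, and observe that $\mu(w)=\mu(\bu)$ forces $\beta'=\zeta'=0$ between $w$ and $\bu$, hence $\nu(w)=\nu(\bu)$, so maximal monotonicity of the graph $\{(\mu(s),\nu(s))\}$ yields $\overline{\nu}=\nu(\bu)$.
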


\begin{proof}

We only need to prove the first conclusion of the theorem, i.e. that
the convergences \eqref{conv:base} hold. Theorems \ref{th:uniftime}
and \ref{th:strgrad} then provide the last two conclusions.
The difference with respect to Theorem \ref{th:weakconv} is the removal, here,
of the structural assumption \eqref{hyp:struct}.
The only place in the proof of Theorem \ref{th:weakconv} where this
assumption was used is in Step 1, to identify the limits
$\overline{\beta}$, $\overline{\zeta}$ and $\overline{\nu}$ of $\Pi_{\disc_m}\beta(u_m)$,
$\Pi_{\disc_m}\zeta(u_m)$ and $\Pi_{\disc_m}\nu(u_m)$. We will show that
these limits can still be identified without assuming \eqref{hyp:struct}.

Set $\mu=\beta+\zeta$, let $\overline{\mu}=\overline{\beta}+\overline{\zeta}$
and fix a measurable $\bu$ such that $(\mu+\nu)(\bu)=\overline{\mu}+\overline{\nu}$.
The existence of such a $\bu$ is ensured by Assumptions \eqref{hypzeta} and \eqref{hypbeta}.
Indeed, these assumptions show that the range of $\mu+\nu$ is $\R$ and therefore
that the pseudo-reciprocal $(\mu+\nu)_r$ of $\mu+\nu$ (defined as in
\eqref{defbetar}) has domain $\R$; this allows us to set, for example,
$\bu=(\mu+\nu)_r(\overline{\mu}+\overline{\nu})$. Let us now prove that,
for such a function $\bu$, we have $\overline{\beta}=\beta(\bu)$, $\overline{\zeta}=\zeta(\bu)$
and $\overline{\nu}=\nu(\bu)$.

By using estimates \eqref{spacetransnu} and \eqref{timetransnew},
Kolmogorov's compactness theorem shows that the convergence of $\Pi_{\disc_m}\nu(u_m)$
towards $\overline{\nu}$ is actually strong in $L^2(\O\times (0,T))$ (we use
$p\ge 2$ here). Since $\mu(\Pi_{\disc_m}u_m)=\beta(\Pi_{\disc_m}u_m)+\zeta(\Pi_{\disc_m}u_m)
\to \overline{\beta} + \overline{\zeta}=\overline{\mu}$ weakly
in $L^2(\O\times (0,T))$, we can apply Lemma \ref{lem:mintylike}
with $\varphi\equiv 1$, $w_m=\Pi_{\disc_m}u_m$, $w=\bu$ and $(\mu,\nu)$ instead of $(\beta,\zeta)$
to deduce that $\overline{\nu}=\nu(\overline{u})$ and $\overline{\mu}=\mu(\overline{u})$. The second
of these relations translates into $\overline{\beta} + \overline{\zeta} = (\beta+\zeta)(\overline{u})$.

We now turn to identifying $\overline{\beta}$ and $\overline{\zeta}$.
Lemmas \ref{estimldlp} and \ref{estl1l2snstdt} show that
$\beta_m=\beta(u_m)$ and $\zeta_m=\zeta(u_m)$ satisfy the assumptions
of the discrete compensated compactness theorem \ref{thm-comp-comp} below (we use
$p\ge 2$ here).
Hence, $\Pi_{\disc_m}\beta(u_m)\Pi_{\disc_m}\zeta(u_m)\to \overline{\beta}\,\overline{\zeta}$
in the sense of measures on $\O\times (0,T)$. Since we already established
that $(\beta+\zeta)(\bu)=\overline{\beta}+\overline{\zeta}$, we can
therefore apply Lemma \ref{lem:mintylike} with $\varphi\equiv 1$, $w_m=\Pi_{\disc_m}u_m$
and $w = \overline{u}$. This gives
 $\overline{\beta}=\beta(\overline{u})$ and
$\overline{\zeta}=\zeta(\overline{u})$ a.e. on $\O\times (0,T)$, as required.

To summarise, the limits of $\Pi_{\disc_m}\beta(u_m)$, $\Pi_{\disc_m}\zeta(u_m)$ and
$\Pi_{\disc_m}\nu(u_m)$ have been identified as $\beta(\overline{u})$,
$\zeta(\overline{u})$ and $\nu(\overline{u})$ for some $\overline{u}$.
Since $\zeta(\overline{u})=\overline{\zeta}\in L^p(\O\times(0,T))$, the growth
assumptions \eqref{hypzeta} on $\zeta$ ensure that $\overline{u}\in L^p(\O\times (0,T))$.
We can then take over the proof of Theorem \ref{th:weakconv} from
after the usage of \eqref{hyp:struct}, using the $\overline{u}$ we just found instead
of the one defined as the weak limit of $\Pi_{\disc_m}u_m$. This allows us to conclude
that $\overline{u}$ is a solution to \eqref{ellgenfllt}, and that the
convergences in \eqref{conv:base} hold.
\end{proof}

\begin{remark} It is not proved that $\overline{u}$ is a weak limit of $\Pi_{\disc_m}u_m$.
Such a limit is not stated in \eqref{conv:base} and is not necessarily expected for the model \eqref{pbintrot}, in which
the quantities of interest (physically relevant when this PDE
models a natural phenomenon) are $\beta(\bu)$, $\zeta(\bu)$
and $\nu(\bu)$.
\end{remark}

\begin{remark}[Maximal monotone operator]\label{rem:maxmongraph} Hypotheses \eqref{hypzeta} and \eqref{hypbeta} imply that the operator $T$ defined by the graph
$\mathcal{G}(T) = \{(\zeta(s),\beta(s)),s\in\R\}$ is a maximal monotone operator with domain $\R$, such that $0\in T(0)$. Indeed, assume that $x,y$ satisfy $(\zeta(s)-x)(\beta(s) - y)\ge 0$ for all $s\in\R$. Then, letting $w\in\R$ be such that 
\begin{equation}\label{rem:xy}
\frac{\beta(w)+\zeta(w)}{2}=\frac{x+y}{2},
\end{equation}
we have $(\zeta(w)-x)(\beta(w) - y) = -( \frac {\zeta(w)-\beta(w)} 2 - \frac {x-y} 2)^2 \ge 0$. This implies $\frac{\zeta(w)-\beta(w)}{2}=\frac{x-y}{2}$
which, combined with \eqref{rem:xy}, gives $x = \zeta(w)$ and $y = \beta(w)$ and hence
$(x,y)\in \mathcal{G}(T)$. 

Reciprocally, for any maximal monotone operator $T$ from $\R$ to $\R$ such that $0\in T(0)$, one can find
$\zeta$ and $\beta$ satisfying \eqref{hypzeta} and \eqref{hypbeta}, and such that $\mathcal{G}(T) = \{(\zeta(s),\beta(s)),s\in\R\}$. Indeed, for all $(x,y)\in \mathcal{G}(T)$ and $(x',y')\in \mathcal{G}(T)$
satisfying $x+y = x'+y'$, since $(x-x')(y-y')\ge 0$ we have $x=x'$ and $y=y'$. We can therefore define $\zeta$ and $\beta$ by: for all $(x,y)\in\mathcal G(T)$,
$x = \zeta(\frac {x+y} 2)$ and $y = \beta(\frac {x+y} 2)$. We observe that these functions are nondecreasing
and Lipschitz-continuous with constant 2, and that $\zeta+\beta = 2 {\rm Id}$.

Hence, Theorem \ref{th:weaknonstruct} applies to the model considered in \cite{rul1996opt}, but provides convergence results for much more general equations and various numerical methods in any space dimension.
\end{remark}

We now state the two key results that allowed us to remove Assumption \eqref{hyp:struct} if $p\ge 2$.
The first one is a discrete version of a compensated compactness
result in \cite{KAZ98}. The second is a Minty-like result, useful to identify
weak non-linear limits.

We note that Theorem \ref{thm-comp-comp} states a more general convergence result than
needed for the proof of Theorem \ref{th:weaknonstruct} (which only requires $\varphi\equiv 1$).
We nevertheless state the general form in order to obtain the genuine discrete equivalent
of the result in \cite{KAZ98}. We also believe that this discrete
compensated compactness theorem will find many more applications in the numerical analysis
of degenerate or coupled parabolic models. We also refer to \cite{ACM} for another
transposition to the discrete setting of a compensated compactness result.

\begin{theorem}[Discrete compensated compactness]
\label{thm-comp-comp}
We take $T>0$, $p\ge 2$ and a sequence $(\disc_m)_{m\in\N} = (X_{\disc_m,0}, \Pi_{\disc_m},\nabla_{\disc_m}, \interp_{\disc_m},(t_m^{(n)})_{n=0,\ldots,N_m})_{m\in\N}$
of space-time gradient discretisations, in the sense of Definition \ref{def-stcons}, that
is consistent and compact in the sense of Definitions \ref{def-cons} and \ref{def-comp}.

For any $m\in\N$, let $\beta_m=(\beta_m^{(n)})_{n=0,\ldots,N_m} \subset X_{\disc_m,0}$ and $\zeta_m=(\zeta_m^{(n)})_{n=0,\ldots,N_m} \subset X_{\disc_m,0}$ be such that
\begin{itemize}
\item the sequences $(\int_0^T |\delta_m \beta_m(t)|_{\star,\disc_m})_{m\in\N}$ 
and $(||\nabla_{\disc_m}\zeta_m||_{L^2(0,T;L^p(\O)^d)})_{m\in\N}$ are bounded,
\item as $m\to\infty$,
$\Pi_{\disc_m}\beta_m\to \overline{\beta}$ and $\Pi_{\disc_m}\zeta_m\to \overline{\zeta}$ weakly in $L^2(\O\times(0,T))$.
\end{itemize}

Then $(\Pi_{\disc_m}\beta_m)(\Pi_{\disc_m}\zeta_m)\to \overline{\beta}\,\overline{\zeta}$ in
the sense of measures on $\O\times(0,T)$, that is, for all
$\varphi \in C(\overline{\O}\times [0,T])$,
\begin{equation}
\lim_{m\to\infty} \int_0^T\int_\O \Pi_{\disc_m}\beta_m(\x,t)\Pi_{\disc_m}\zeta_m(\x,t)\varphi(\x,t)
\d\x\d t = \int_0^T\int_\O \overline{\beta}(\x,t)\,\overline{\zeta}(\x,t)\varphi(\x,t)\d\x\d t.
 \label{eq-comp-comp}
\end{equation}
\end{theorem}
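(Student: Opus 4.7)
The strategy is a discrete adaptation of the Kazhikhov--Zlotnik compensated-compactness argument \cite{KAZ98}: the time regularity of $\beta_m$ (bound on $|\delta_m \beta_m|_{\star,\disc_m}$) played against the spatial regularity of $\zeta_m$ (bound on $\nabla_{\disc_m}\zeta_m$) should force the weak limit of the product $\Pi_{\disc_m}\beta_m\Pi_{\disc_m}\zeta_m$ to equal the product of the weak limits. Since this product is bounded in $L^1(\Omega\times(0,T))$ by Cauchy--Schwarz (using $p\ge 2$ to convert $\nabla_{\disc_m}\zeta_m\in L^2(0,T;L^p)$-control into an $L^2$ bound on $\Pi_{\disc_m}\zeta_m$), and since tensor products are dense in $C(\overline{\Omega}\times[0,T])$, it suffices to prove \eqref{eq-comp-comp} for $\varphi(x,t)=a(x)b(t)$ with $a\in C_c^\infty(\Omega)$ and $b\in C^\infty([0,T])$.

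The first step is to ``absorb'' the spatial factor $a$ into $\zeta_m$ at the discrete level. For each degree of freedom $i\in I_m$, pick $x_i^\Omega\in\overline{\Omega_i}$ and define $z_m(t)\in X_{\disc_m,0}$ by $(z_m(t))_i=a(x_i^\Omega)(\zeta_m(t))_i$, so that $\Pi_{\disc_m}z_m$ differs from $a\,\Pi_{\disc_m}\zeta_m$ only through the oscillation of $a$ inside the cells $\Omega_i$. Using the space-compactness assumption (Definition \ref{def-comp}) and the Lipschitz regularity of $a$, one establishes a discrete Leibniz-type estimate: $\Pi_{\disc_m}z_m - a\,\Pi_{\disc_m}\zeta_m\to 0$ in $L^2(0,T;L^p(\Omega))$ and $(\|\nabla_{\disc_m}z_m\|_{L^2(0,T;L^p(\Omega)^d)})_{m\in\N}$ is bounded. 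Combined with the $L^2$-weak convergence of $\Pi_{\disc_m}\beta_m$, this reduces \eqref{eq-comp-comp} to proving
\[
\int_0^T b(t)\int_\Omega \Pi_{\disc_m}\beta_m\,\Pi_{\disc_m}z_m\,dx\,dt\;\longrightarrow\;\int_0^T b(t)\int_\Omega a\,\overline{\beta}\,\overline{\zeta}\,dx\,dt.
\]

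To handle the remaining integral, I would introduce the time-averaging $\beta_m^\delta(t):=\frac{1}{\delta}\int_{(t-\delta)_+}^{t}\Pi_{\disc_m}\beta_m(s)\,ds$ for $\delta>0$. Writing $\Pi_{\disc_m}\beta_m-\beta_m^\delta$ as a weighted sum of the time-jumps $\Pi_{\disc_m}(\beta_m^{(n+1)}-\beta_m^{(n)})$ and pairing against $\Pi_{\disc_m}z_m$ via the dual seminorm \eqref{nstnormemel}, together with Cauchy--Schwarz in time to convert the $L^1$-in-$t$ bound on $|\delta_m\beta_m|_{\star,\disc_m}$ into an estimate involving $\|\nabla_{\disc_m}z_m\|_{L^2(0,T;L^p)}$, yields the uniform error bound
\[
\sup_{m\in\N}\left|\int_0^T b(t)\int_\Omega(\Pi_{\disc_m}\beta_m-\beta_m^\delta)\Pi_{\disc_m}z_m\,dx\,dt\right|=O(\delta^{1/2}).
\]
For fixed $\delta$, the function $\beta_m^\delta$ is bounded in $L^\infty(0,T;L^2(\Omega))$ and is $O(1/\delta)$-Lipschitz in $t$ with values in $L^2(\Omega)$; Ascoli in the weak-$L^2$ topology (in the spirit of Theorem \ref{unifweakGDcomp}) then yields $\beta_m^\delta\to\overline{\beta}^\delta$ in $C([0,T];L^2(\Omega)\weak)$. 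Passing to the limit at fixed $\delta$ in $\int_0^T b\int\beta_m^\delta\,\Pi_{\disc_m}z_m$ and then sending $\delta\to 0$, using the strong $L^2$-convergence $\overline{\beta}^\delta\to\overline{\beta}$, should conclude the proof.

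The main obstacle lies in this final combined limit at fixed $\delta$: both $\beta_m^\delta$ and $\Pi_{\disc_m}z_m$ converge only weakly in $L^2(\Omega\times(0,T))$, so the passage to the limit in their product is precisely where the genuine compensated-compactness cancellation must be invoked. The time-equicontinuity of $\beta_m^\delta$ in the weak $L^2$ topology must be combined with the spatial equicontinuity of $\Pi_{\disc_m}z_m$ in $L^p$ (provided by the bound on $\nabla_{\disc_m}z_m$ and the space-compactness of $(\disc_m)$) to identify the limit; expressing this cancellation rigorously within the abstract gradient-discretisation setting, together with the discrete Leibniz construction of the second step, is the most delicate technical ingredient and is the heart of what makes this theorem non-trivial.
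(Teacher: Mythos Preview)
Your proposal has a genuine gap at the final step, which you yourself flag: after time-averaging $\beta_m$ to $\beta_m^\delta$, you still have a product of two sequences that converge only weakly in $L^2(\Omega\times(0,T))$, and you offer no mechanism to pass to the limit. The time-regularisation gives $\beta_m^\delta\to\overline{\beta}^\delta$ in $C([0,T];L^2(\Omega)\weak)$, but this is still only weak in space; pairing it against $\Pi_{\disc_m}z_m$, which is only weak in time, is exactly the original difficulty you set out to resolve. The ``cancellation'' you invoke at the end is the content of the theorem, not a tool you can appeal to. A secondary problem is the discrete Leibniz step: in the abstract gradient-discretisation framework $\nabla_{\disc_m}$ is a black-box linear map, and nothing in Definitions \ref{def-stcons}--\ref{def-comp} lets you bound $\|\nabla_{\disc_m}z_m\|_{L^p}$ in terms of $\|\nabla_{\disc_m}\zeta_m\|_{L^p}$ when $(z_m)_i=a(x_i)(\zeta_m)_i$; the compactness hypothesis controls translates of $\Pi_{\disc_m}$, not products inside $\nabla_{\disc_m}$.

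The paper's argument avoids both issues by regularising in the \emph{opposite} variable: it space-averages $\Pi_{\disc_m}\zeta_m$ over a fixed grid of cubes $(A_k^\delta)_k$, using the compactness hypothesis to show this approximation is uniform in $m$. This reduces $\Pi_{\disc_m}\zeta_m$ to a finite sum of tensorial functions $\mathbf{1}_{A_k^\delta}(\x)\,[\Pi_{\disc_m}\zeta_m]_{A_k^\delta}(t)$, and after a further density argument on $\varphi$ the problem collapses to a one-dimensional limit $\int_0^T \theta(t)\,[\Pi_{\disc_m}\zeta_m]_A(t)\,F_m(t)\,dt$ with $F_m(t)=\int_\Omega \Pi_{\disc_m}\beta_m(\x,t)\psi(\x)d\x$ for a fixed $\psi\in C_c^\infty(\Omega)$. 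Now $[\Pi_{\disc_m}\zeta_m]_A\to[\,\overline{\zeta}\,]_A$ weakly in $L^2(0,T)$, while $F_m$ (after replacing $\psi$ by its interpolant $\Pi_{\disc_m}P_{\disc_m}\psi$) is bounded in $BV(0,T)\cap L^2(0,T)$ thanks to the assumption on $\int_0^T|\delta_m\beta_m|_{\star,\disc_m}$, hence relatively compact in $L^2(0,T)$. The product is then weak$\times$strong in the scalar space $L^2(0,T)$ and the limit passes. The key idea you are missing is this reduction to scalar functions of time: integrating $\beta_m$ against a \emph{fixed} spatial test function converts the dual-seminorm bound into genuine $BV(0,T)$ compactness, which is where the strong convergence comes from.
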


\begin{proof}

The idea is to reduce to the case where $\Pi_{\disc_m}\zeta_m$ is a
tensorial function, in order to separate the space and time variables and make
use of the compactness of $\Pi_{\disc_m}\zeta_m$ and $\Pi_{\disc_m}\beta_m$
with respect to each of these variables. Note that the technique we use here apparently provides
a new proof for the continuous equivalent of this compensated compactness result.

\medskip

\textbf{Step 1}: reduction of $\Pi_{\disc_m}\zeta_m$ to tensorial functions.

Let us take $\delta>0$ and let us consider a covering $(A^\delta_k)_{k=1,\ldots,K}$ of $\Omega$ in 
disjoint cubes of length $\delta$.
Let  $R_\delta:L^2(\O)\to L^2(\O)$ be the operator defined by:
\[
\forall g\in L^2(\O)\,,\;\forall k=1,\ldots,K\,,\;\forall \x\in A_k^\delta\cap \O\,:\,
R_\delta g(\x) = \frac{1}{{\rm meas}(A_k^\delta)}\int_{A^k_\delta} g(\y)\d\y,
\]
where $g$ has been extended by $0$ outside $\O$. Let $\x\in A_k^\delta\cap \O$.
Using Jensen's inequality, the fact that ${\rm meas}(A_k^\delta)=\delta^d$ and the change of variable
$\y\in A_k^\delta\mapsto \bxi=\y-\x\in (-\delta,\delta)^d$, we can write
\[
|R_\delta g(\x)-g(\x)|^2 \le \delta^{-d}\int_{A_k^\delta}|g(\y)-g(\x)|^2\d\y
\le\delta^{-d}\int_{(-\delta,\delta)^d}|g(\x+\bxi)-g(\x)|^2\d \bxi.
\]
Integrating over $\x\in A_k^\delta$ and summing over $k=1,\ldots,K$ gives
\begin{eqnarray}
||R_\delta g-g||_{L^2(\O)}^2 &\le&\delta^{-d}
 \int_{(-\delta,\delta)^d}\int_{\R^d}|g(\x+\bxi)-g(\x)|^2\d\x\d \bxi\nonumber\\
&\le& 2^d\sup_{\bxi\in (-\delta,\delta)^d} \int_{\R^d}|g(\x+\bxi)-g(\x)|^2\d\x.
\label{estRd}\end{eqnarray}
The compactness of $(\disc_m)_{m\in\N}$ (Definition \ref{def-comp}) and
the fact that $p\ge 2$ give $\epsilon(\bxi)$ such that
$\epsilon(\bxi)\to 0$ as $\bxi\to 0$ and, for all $m\in\N$ and all $v\in X_{\disc_m,0}$,
\[
||\Pi_{\disc_m}v(\cdot+\bxi)-\Pi_{\disc_m}v||_{L^2(\R^d)}^2\le \epsilon(\bxi)||\nabla_{\disc_m} v||_{L^p(\O)^d}^2.
\]
Combining this with \eqref{estRd} and using the bound on $||\nabla_{\disc_m}\zeta_m||_{L^2(0,T;L^p(\O)^d)}$
shows that
\be\label{estRd2}
||R_\delta \Pi_{\disc_m}\zeta_m-\Pi_{\disc_m}\zeta_m||_{L^2(\O\times(0,T))}
\le C\sup_{|\bxi|_\infty\le \delta}\sqrt{\epsilon(\bxi)}=:\omega(\delta)
\ee
where $C$ does not depend on $m$, and $\omega(\delta)\to 0$ as $\delta\to 0$. Note that
a similar estimate holds with $\Pi_{\disc_m}\zeta_m$ replaced with $\overline{\zeta}$
since $\overline{\zeta}\in L^2(\O\times(0,T))$.

If we respectively denote by $\mathcal A_m(\Pi_{\disc_m}\zeta_m)$ and $\mathcal A(\overline{\zeta})$ the 
integrals in the left-hand side
and right-hand side \eqref{eq-comp-comp}, then since $(\Pi_{\disc_m}\beta_m)_{m\in\N}$
is bounded in $L^2(\O\times (0,T))$ we have by \eqref{estRd2}
\begin{equation}\label{previneq}
|\mathcal A_m(\Pi_{\disc_m}\zeta_m)-\mathcal A(\overline{\zeta})|\le C\omega(\delta)
+|\mathcal A_m(R_\delta\Pi_{\disc_m}\zeta_m)-\mathcal A(R_\delta\overline{\zeta})|.
\end{equation}
Let us assume that we can prove that, for a fixed $\delta$,
\be\label{wwntp}
\mathcal A_m(R_\delta\Pi_{\disc_m}\zeta_m)\to \mathcal A(R_\delta\overline{\zeta})\mbox{ as $m\to\infty$}.
\ee
Then \eqref{previneq} gives $\limsup_{m\to\infty}
|\mathcal A_m(\Pi_{\disc_m}\zeta_m)-\mathcal A(\overline{\zeta})|\le C\omega(\delta)$. Letting $\delta\to 0$
in this inequality gives $\mathcal A_m(\Pi_{\disc_m}\zeta_m)\to \mathcal A(\overline{\zeta})$ as wanted.
Hence, we only need to prove \eqref{wwntp}. 

The definition of $R_\delta$ shows that
\[
R_\delta g=\sum_{k=1}^K\frac{1}{{\rm meas}(A_k^\delta)}\mathbf{1}_{A_k^\delta}
[g]_{A_k^\delta},
\]
where $\mathbf{1}_{A_k^\delta}$ is the characteristic function of $A_k^\delta$
and $[g]_A=\int_A g(\x)\d \x$. Hence, \eqref{wwntp} follows if we can prove that
for any measurable set $A$
\begin{multline}
\lim_{m\to\infty} \int_0^T\int_\O \Pi_{\disc_m}\beta_m(\x,t)[\Pi_{\disc_m}\zeta_m]_A(t)
\varphi(t,\x)\mathbf{1}_A(\x)
\d\x\d t \\
= \int_0^T\int_\O \overline{\beta}(\x,t)[\,\overline{\zeta}\,]_A(t)
\varphi(t,\x)\mathbf{1}_A(\x)\d\x\d t
\label{eq-comp-comp2}
\end{multline}
where for $g\in L^2(\O\times (0,T))$ we set $[g]_A(t)=\int_A g(t,\y)\d\y$.

\medskip

\textbf{Step 2}: further reductions.

We now reduce $\varphi$ to a tensorial function and $\mathbf{1}_A$ to a smooth function.
It is well-known that there exists tensorial functions $\varphi_r=\sum_{l=1}^{L_r}
\theta_{l,r}(t)\gamma_{l,r}(\x)$, with $\theta_{l,r}\in C^\infty([0,T])$ and $\gamma_{l,r}\in
C^\infty(\overline{\Omega})$, such that $\varphi_r\to \varphi$ uniformly on $\O\times (0,T)$
as $r\to\infty$. Moreover, there exists $\rho_r\in C^\infty_c(\O)$ such that
$\rho_r\to \mathbf{1}_A$ in $L^2(\O)$ as $r\to\infty$.

Hence, as $r\to\infty$ the function $(t,\x)\mapsto \varphi_r(t,\x)\rho_r(\x)$ converges
in $L^\infty(0,T;L^2(\O))$ to the function $(t,\x)\mapsto \varphi(t,\x)\mathbf{1}_A(\x)$.
Since the sequence of functions $(t,\x)\mapsto \Pi_{\disc_m}\beta_m(t,\x)
[\Pi_{\disc_m}\zeta_m]_A(t)$ is bounded in $L^1(0,T;L^2(\O))$ (notice
that $([\Pi_{\disc_m}\zeta_m]_A)_{m\in\N}$ is bounded in $L^2(0,T)$
since $(\Pi_{\disc_m}\zeta_m)_{m\in\N}$ is bounded in $L^2(\O\times(0,T))$),
a reasoning similar to the one used in Step 1 shows that we only need
to prove \eqref{eq-comp-comp2} with $\varphi(t,\x)\mathbf{1}_A(\x)$ replaced
with $\varphi_r(t,\x)\rho_r(\x)$ for a fixed $r$.

We have $\varphi_r(t,\x)\rho_r(\x)=\sum_{l=1}^{L_r}
\theta_{l,r}(t)(\gamma_{l,r}\rho_r)(\x)$ and $\gamma_{l,r}\rho_r\in C^\infty_c(\O)$.
Hence, \eqref{eq-comp-comp2} with $\varphi(t,\x)\mathbf{1}_A(\x)$ replaced
with $\varphi_r(t,\x)\rho_r(\x)$ will follow if we can establish that
for any $\theta\in C^\infty([0,T])$, any $\psi\in C^\infty_c(\O)$
and any measurable set $A$
\begin{equation}
\lim_{m\to\infty} \int_0^T\int_\O \theta(t)\Pi_{\disc_m}\beta_m(\x,t)[\Pi_{\disc_m}\zeta_m]_A(t)
\psi(\x)\d\x\d t = \int_0^T\int_\O \theta(t)\overline{\beta}(\x,t)[\,\overline{\zeta}\,]_A(t)\psi(\x)\d\x\d t.
\label{eq-comp-comp3}
\end{equation}

\medskip

\textbf{Step 3}: proof of \eqref{eq-comp-comp3}.

We now use the estimate on $\delta_m\beta_m$ to conclude. We write
\begin{equation}
\int_0^T\int_\O \theta(t)\Pi_{\disc_m}\beta_m(\x,t)[\Pi_{\disc_m}\zeta_m]_A(t)
\psi(\x)\d\x\d t
=\int_0^T \theta(t)[\Pi_{\disc_m}\zeta_m]_A(t) F_m(t)
\label{defFm}
\end{equation}
with $F_m(t)= \int_\O \Pi_{\disc_m}\beta_m(\x,t)\psi(\x)\d\x$. It is clear
from the weak convergence of $\Pi_{\disc_m}\zeta_m$ that $[\Pi_{\disc_m}\zeta_m]_A
\to [\,\overline{\zeta}\,]_A$ weakly in $L^2(0,T)$. Hence, if we can prove that $F_m\to 
F:=\int_\O \overline{\beta}(\x,\cdot)\psi(\x)\d\x$ strongly in $L^2(0,T)$, we can
pass to the limit in \eqref{defFm} and obtain \eqref{eq-comp-comp3}.
Since $F_m$ weakly converges to $F$ in $L^2(0,T)$ (thanks to the weak convergence
of $\Pi_{\disc_m}\beta_m$ in $L^2(\O\times(0,T))$), we only have to prove that
$(F_m)_{m\in\N}$ is relatively compact in $L^2(0,T)$.

We introduce the interpolant $P_{\disc_m}$ defined by \eqref{def-PD}
and we define $G_m$ as $F_m$ with $\psi$ replaced with $\Pi_{\disc_m}P_{\disc_m}\psi$. We then have
\[
|F_m(t)-G_m(t)| \le ||\Pi_{\disc_m}\beta_m(\cdot,t)||_{L^2(\O)}S_{\disc_m}(\psi).
\]
The consistency of $(\disc_m)_{m\in\N}$ thus shows that 
\begin{equation}
\mbox{$F_m-G_m\to 0$ strongly in $L^2(0,T)$ as $m\to\infty$.}
\label{strFG}
\end{equation}
We now study the strong convergence of $G_m$. This function is, like $\Pi_{\disc_m}\beta_m$,
piecewise constant on $(0,T)$ and, by definition of
$|\cdot|_{\star,\disc_m}$, its discrete derivative satisfies
\[
|\delta_mG_m(t)|\le |\delta_m\beta_m (t)|_{\star,\disc_m}||\nabla_{\disc_m}P_{\disc_m}\psi||_{L^p(\O)^d}.
\]
Since $||\nabla_{\disc_m}P_{\disc_m}\psi||_{L^p(\O)^d}\le S_{\disc_m}(\psi)+||\nabla\psi||_{L^p(\O)^d}$ is bounded
uniformly with respect to $m$, the assumption on $\delta_m \beta_m$ proves that
$(||\delta_m G_m||_{L^1(0,T)})_{m\in\N}$ is bounded. We have $||\delta_m G_m||_{L^1(0,T)}=|G_m|_{BV(0,T)}$,
and $(\Pi_{\disc_m}\beta_m)_{m\in\N}$ is bounded in $L^2(\O\times (0,T))$;
hence, $(G_m)_{m\in\N}$ is bounded in $BV(0,T)\cap L^2(0,T)$
and therefore relatively compact in $L^2(0,T)$ (see \cite[Theorem 10.1.4]{ABM06}). Combined with \eqref{strFG}, this shows that
$(F_m)_{m\in \N}$ is relatively compact in $L^2(0,T)$ and concludes the proof. \end{proof}

\begin{remark} If we assume that $(\Pi_{\disc_m}\beta_m)_{m\in\N}$ is bounded
in $L^\infty(0,T;L^2(\O))$ and that, for some $q>1$,
$(\int_0^T |\delta_m \beta_m(t)|^q_{\star,\disc_m})_{m\in\N}$ is bounded, then
Step 3 becomes a trivial consequence of Theorem \ref{unifweakGDcomp}.
Indeed, this theorem shows that $(\Pi_{\disc_m}\beta_m)_{m\in\N}$ is
relatively compact uniformly-in-time and weakly in $L^2(\O)$, which translates
into the relative compactness of $(F_m)_{m\in \N}$ in $L^\infty(0,T)$.
\end{remark}

\begin{lemma} \label{lem:mintylike}
Let $V$ be a non-empty measurable subset of $\R^N$, $N\ge 1$.
Let $\beta,\zeta\in C^0(\R)$ be two
nondecreasing functions such that $\beta(0)=\zeta(0)=0$. We assume that there
exists a sequence $(w_m)_{m\in\N}$ of measurable functions on $V$, and two functions $\overline{\beta},\overline{\zeta} \in L^2(V)$ such that:
\begin{itemize}
\item $\beta(w_m)\to \overline{\beta}$ and $\zeta(w_m)\to\overline{\zeta}$ weakly in $L^2(V)$,
\item there exists $\varphi\in L^\infty(V)$ such that $\varphi>0$ a.e. on $V$ and 
\begin{equation}
\lim_{m\to\infty} \int_V \varphi(\z)\beta(w_m(\z))\zeta(w_m(\z))\d\z = \int_V \varphi(\z)\overline{\beta}(\z)\,\overline{\zeta}(\z)\d\z. 
\label{lem:cvint}\end{equation}
\end{itemize}
Then, for any measurable function $w$ such that $(\beta + \zeta)(w) = \overline{\beta}+\overline{\zeta}$ a.e. in $V$, we have
\begin{equation}
\overline{\beta} = \beta(w)\hbox{ and } \overline{\zeta} = \zeta(w) \hbox{ a.e. in $V$}.
\label{eq:mintylike}
\end{equation}
\end{lemma}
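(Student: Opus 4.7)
The plan is to use a Minty-type monotonicity argument combined with the key hypothesis \eqref{lem:cvint} to derive an equality from a single monotonicity inequality, rather than taking the usual family of test functions and letting a parameter tend to $0$.

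First, I would verify the integrability needed to use $w$ itself as a test function. Since $\beta$ and $\zeta$ are nondecreasing with $\beta(0)=\zeta(0)=0$, both $\beta(w)$ and $\zeta(w)$ share the sign of $w$. Hence $|\beta(w)|\le |\beta(w)+\zeta(w)|$ and $|\zeta(w)|\le |\beta(w)+\zeta(w)|$ pointwise. Using the assumption $(\beta+\zeta)(w)=\overline{\beta}+\overline{\zeta}\in L^2(V)$, this gives $\beta(w),\zeta(w)\in L^2(V)$, and with $\varphi\in L^\infty(V)$ the products $\varphi\beta(w)$, $\varphi\zeta(w)$, and $\varphi\beta(w)\zeta(w)$ all lie in the appropriate spaces to be paired with the weakly convergent sequences.

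Next, for any measurable $g:V\to\R$ such that $\beta(g),\zeta(g)\in L^2(V)$, the monotonicity of $\beta$ and $\zeta$ yields the pointwise inequality
\[
\varphi(\z)\bigl(\beta(w_m(\z))-\beta(g(\z))\bigr)\bigl(\zeta(w_m(\z))-\zeta(g(\z))\bigr)\ge 0 \quad\text{a.e. on } V
\]
(both factors have the same sign as $w_m-g$, and $\varphi>0$). Integrating over $V$, expanding the product, and passing to the limit $m\to\infty$ gives a finite limit for the cross terms by the weak convergences $\beta(w_m)\rightharpoonup\overline{\beta}$ and $\zeta(w_m)\rightharpoonup\overline{\zeta}$ in $L^2(V)$, and for the quadratic term by the key assumption \eqref{lem:cvint}. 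The result is
\[
\int_V \varphi(\z)\bigl(\overline{\beta}(\z)-\beta(g(\z))\bigr)\bigl(\overline{\zeta}(\z)-\zeta(g(\z))\bigr)\d\z \ge 0.
\]

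Finally, I would specialise to $g=w$, which is admissible by the first paragraph. Using $(\beta+\zeta)(w)=\overline{\beta}+\overline{\zeta}$ a.e., we obtain $\overline{\zeta}-\zeta(w)=-(\overline{\beta}-\beta(w))$ a.e., so the previous inequality becomes
\[
-\int_V \varphi(\z)\bigl(\overline{\beta}(\z)-\beta(w(\z))\bigr)^2\d\z \ge 0.
\]
Since $\varphi>0$ a.e.\ and the integrand is nonpositive, we conclude $\overline{\beta}=\beta(w)$ a.e., and therefore $\overline{\zeta}=\zeta(w)$ a.e., which is \eqref{eq:mintylike}. The main subtlety (not really an obstacle once noticed) is the preliminary verification that $\beta(w),\zeta(w)\in L^2(V)$, which is not assumed directly but follows from the sign-sharing argument; this is what allows the Minty inequality to be specialised to $g=w$ without truncation arguments.
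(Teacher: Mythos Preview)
Your proof is correct and follows essentially the same route as the paper: establish $\beta(w),\zeta(w)\in L^2(V)$ from the sign-sharing observation, pass to the limit in the monotonicity inequality, and use $(\beta+\zeta)(w)=\overline{\beta}+\overline{\zeta}$ to turn the resulting product into a negative perfect square. The only cosmetic difference is that the paper writes this square as $\left(\tfrac{\overline{\beta}-\overline{\zeta}}{2}-\tfrac{\beta(w)-\zeta(w)}{2}\right)^2$ while you write it as $(\overline{\beta}-\beta(w))^2$; these are of course the same quantity.
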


\begin{proof}
We first notice that $\beta(w)$ and $\zeta(w)$ belong to $L^2(V)$ since they have the same sign and therefore verify $|\beta(w)| + |\zeta(w)| = |\overline{\beta}+\overline{\zeta}|\in L^2(V)$. Using the fact that
$\beta$ and $\zeta$ are non-decreasing, we can write
\[
 \int_V \varphi(\z ) \left[\beta(w_m(\z)) - \beta(w(\z))\right]\,\left[\zeta(w_m(\z)) - \zeta(w(\z))\right]\d\z \ge 0.
\]
Letting $m\to\infty$ in the above inequality, and using the convergences of $\beta(w_m)$,
$\zeta(w_m)$ and \eqref{lem:cvint}, we obtain
\begin{equation}\label{ineq-pos}
 \int_V \varphi(\z) \left[\overline{\beta}(\z) - \beta(w(\z))\right]
\left[\, \overline{\zeta}(\z) - \zeta(w(\z))\right]\d\z \ge 0.
\end{equation}
We then remark that $\overline{\beta}+\overline{\zeta}= \beta(w) + \zeta(w)$
gives $\beta(w) = \frac {\overline{\beta}+\overline{\zeta}} 2 + \left(\frac {\beta-\zeta} 2\right)(w)$
and $\zeta(w) = \frac {\overline{\beta}+\overline{\zeta}} 2 - \left(\frac {\beta-\zeta} 2\right)(w)$.
Hence, \eqref{ineq-pos} leads to
\[
- \int_V \varphi(\z) \left[\frac {\overline{\beta}-\overline{\zeta}} 2(\z) - 
\left(\frac {\beta-\zeta} 2\right)(w(\z))\right]^2\d\z \ge 0.
\]
Since $\varphi$ is almost everywhere strictly positive on $V$, we deduce
that $\frac {\overline{\beta}-\overline{\zeta}} 2 = \frac {\beta(w)-\zeta(w)} 2$ a.e. in $V$,
and \eqref{eq:mintylike} follows from $\frac{\overline{\beta}+\overline{\zeta}}{2}=
\frac{\beta(w) + \zeta(w)}{2}$.
\end{proof}

\section{Appendix: uniform-in-time compactness results for time-dependent problems} \label{sec:comptime}

We establish in this appendix some generic results, unrelated to the framework
of gradient schemes, that form the starting point for our uniform-in-time convergence
results.

\medskip

Solutions of numerical schemes for parabolic equations are usually
piecewise constant, and therefore not continous, in time. As their
jumps nevertheless tend to become small as the time step goes to $0$, it
is possible to establish uniform-in-time convergence properties
using a generalisation to non-continuous functions of the classical Ascoli-Arzel\`a theorem.

\begin{definition}\label{def-mathcalF}
If $(K,d_K)$ and $(E,d_E)$ are metric spaces, we denote by
$\mathcal F(K,E)$ the space of functions $K\to E$ endowed with
the uniform metric $d_\mathcal F(v,w)=\sup_{s\in K}d_E(v(s),w(s))$ (note that
this metric may take infinite values).
\end{definition}

\begin{theorem}[discontinuous Ascoli-Arzel\`a's theorem]\label{genascoli}
Let $(K,d_K)$ be a compact metric spa\-ce, $(E,d_E)$ be a complete metric space
and $(\mathcal F(K,E),d_{\mathcal F})$ be as in Definition \ref{def-mathcalF}.

Let $(v_m)_{m\in\N}$ be a sequence in $\mathcal F(K,E)$ such that there exists
a function $\omega:K\times K\to [0,\infty]$ and a sequence $(\delta_m)_{m\in\N}
\subset [0,\infty)$ satisfying
\begin{equation}\label{quasi-equic}
\begin{array}{l}
\dsp\lim_{d_K(s,s')\to 0}\omega(s,s')=0\,,\quad \lim_{m\to \infty}\delta_m=0\,,\\[1em]
\dsp\forall (s,s')\in K^2\,,\;\forall m\in\N\,,\; 
d_E(v_m(s),v_m(s'))\le \omega(s,s')+\delta_m.
\end{array}
\end{equation}
We also assume that, for all $s\in K$, $\{v_m(s)\,:\,m\in\N\}$ is relatively compact
in $(E,d_E)$.

Then $(v_m)_{m\in\N}$ is relatively compact in $(\mathcal F(K,E),d_{\mathcal F})$
and any adherence value of $(v_m)_{m\in\N}$ in this space is continuous $K\to E$.
\end{theorem}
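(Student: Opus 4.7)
The plan is to mimic the classical Ascoli--Arzelà argument, treating the error term $\delta_m$ as a vanishing perturbation. First, I would pick a countable dense subset $D=\{s_l\}_{l\in\N}\subset K$ (available since $K$ is compact metric, hence separable). Using that $\{v_m(s_l)\,:\,m\in\N\}$ is relatively compact in $E$ for each fixed $l$, a standard diagonal extraction produces a subsequence, still denoted $(v_m)_{m\in\N}$, such that $v_m(s_l)\to v(s_l)$ in $E$ for every $l\in\N$.

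Next I would show that $v$ extends continuously to all of $K$ and that $v_m\to v$ uniformly on $K$. For $s,s'\in D$, passing to the limit in the inequality of \eqref{quasi-equic} and using $\delta_m\to 0$ gives $d_E(v(s),v(s'))\le \omega(s,s')$, so $v$ is uniformly continuous on $D$ with modulus $\omega$. Using the completeness of $E$ and the Cauchy property $d_E(v(s_l),v(s_{l'}))\le \omega(s_l,s_{l'})\to 0$ when $s_l,s_{l'}$ approach a point $s\in K$, we extend $v$ to a continuous function $v:K\to E$ still satisfying $d_E(v(s),v(s'))\le \omega(s,s')$. For the uniform convergence, given $\varepsilon>0$, use the modulus property of $\omega$ to fix $\eta>0$ with $\omega(s,s')\le\varepsilon$ whenever $d_K(s,s')\le\eta$; by compactness of $K$, extract a finite subnet $\{s_{l_1},\dots,s_{l_N}\}\subset D$ such that every $s\in K$ lies within $d_K$-distance $\eta$ of some $s_{l_i}$. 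For $m$ sufficiently large, $d_E(v_m(s_{l_i}),v(s_{l_i}))\le\varepsilon$ for all $i=1,\dots,N$ (only finitely many indices to control), and for any $s\in K$ choosing an appropriate $s_{l_i}$ yields
\[
d_E(v_m(s),v(s))\le d_E(v_m(s),v_m(s_{l_i}))+d_E(v_m(s_{l_i}),v(s_{l_i}))+d_E(v(s_{l_i}),v(s))\le 3\varepsilon+\delta_m,
\]
uniformly in $s$, which gives $v_m\to v$ in $(\mathcal F(K,E),d_\mathcal F)$.

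Finally, if $v$ is any adherence value of the original sequence, say $v_{m_k}\to v$ in $\mathcal F(K,E)$, then for any $s,s'\in K$ the triangle inequality combined with \eqref{quasi-equic} yields
\[
d_E(v(s),v(s'))\le 2\,d_\mathcal F(v,v_{m_k})+\omega(s,s')+\delta_{m_k},
\]
and letting $k\to\infty$ gives $d_E(v(s),v(s'))\le \omega(s,s')$, whence $v$ is continuous on $K$ by the hypothesis on $\omega$. The main obstacle is bookkeeping the $\delta_m$ term throughout: one has to check carefully that the finite cover argument in the uniform convergence step only involves \emph{finitely many} pointwise convergences, so that the $\delta_m$ perturbation can be absorbed uniformly. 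Everything else reduces to the classical Ascoli--Arzelà template.
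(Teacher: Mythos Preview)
Your proof is correct and follows essentially the same approach as the paper: a countable dense subset, diagonal extraction, a finite $\eta$-cover, and the triangle inequality with the $\delta_m$ term absorbed at the end. The only cosmetic difference is that the paper shows the extracted subsequence is Cauchy in $(\mathcal F(K,E),d_{\mathcal F})$ and invokes completeness of that space, whereas you explicitly construct the limit $v$ on the dense set, extend it by uniform continuity, and prove uniform convergence to $v$ directly; these are interchangeable variants of the classical Ascoli--Arzel\`a argument.
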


\begin{proof}
Let us first notice that the last conclusion of the theorem, i.e. that any adherence value
$v$ of $(v_m)_{m\in\N}$ in $\mathcal F(K,E)$ is continuous, is trivially
obtained by passing to the limit in \eqref{quasi-equic}, which shows that the modulus of continuity of $v$
is bounded above by $\omega$.

The proof of the compactness result is an easy generalisation of the
proof of the classical Ascoli-Arzel\`a theorem. We start by taking a countable dense subset $\{s_l\,:\,l\in\N\}$ in $K$ (the existence
of this set is ensured since $K$ is compact metric). Since each set $\{v_m(s_l)\,:\,m\in\N\}$
is relatively compact in $E$, by diagonal extraction we can select a subsequence
of $(v_m)_{m\in\N}$, denoted the same way, such that, for any $l\in\N$,
$(v_m(s_l))_{m\in\N}$ converges in $E$.
We then proceed to show that $(v_m)_{m\in\N}$ is a Cauchy sequence in $(\mathcal F(K,E),d_{\mathcal F})$.
Since this space is complete, this will prove that this sequence converges in
this space, which will complete the proof.

Let $\varepsilon>0$ and, using \eqref{quasi-equic}, take $\rho>0$ and $M\in\N$
such that $\omega(s,s')\le \varepsilon$ whenever $d_K(s,s')\le \rho$ and
$\delta_m\le \varepsilon$ whenever $m\ge M$.
Select a finite set $\{s_{l_1},\ldots,s_{l_N}\}$ such that
any $s\in K$ is within distance $\rho$ of a $s_{l_i}$. Then for any
$m,m'\ge M$
\begin{eqnarray*}
d_E(v_m(s),v_{m'}(s))&\le& d_E(v_m(s),v_m(s_{l_i}))+d_E(v_m(s_{l_i}),v_{m'}(s_{l_i}))
+d_E(v_{m'}(s_{l_i}),v_{m'}(s))\\
&\le& \omega(s,s_{l_i})+\delta_m + d_E(v_m(s_{l_i}),v_{m'}(s_{l_i})) + \omega(s,s_{l_i})+\delta_{m'}\\
&\le& 4\varepsilon + d_E(v_m(s_{l_i}),v_{m'}(s_{l_i})).
\end{eqnarray*}
Since $\{(v_m(s_{l_i}))_{m\in\N}\,:\,i=1,\ldots,N\}$ forms a finite number of converging
sequences in $E$, we can find $M'\ge M$ such that, for all $m,m'\ge M'$ and all
$i=1,\ldots,N$, $d_E(v_m(s_{l_i}),v_{m'}(s_{l_i}))\le \varepsilon$. This shows that,
for all $m,m'\ge M'$ and all $s\in K$, $d_E(v_m(s),v_{m'}(s))\le 5\varepsilon$
and concludes the proof that $(v_m)_{m\in\N}$ is a Cauchy sequence in $(\mathcal F(K,E),d_{\mathcal F})$.
\end{proof}

\begin{remark}
Conditions \eqref{quasi-equic} are usually the most
practical when $(v_m)_{m\in\N}$ are piecewise constant-in-time solutions to numerical schemes
(see e.g. the proof of Theorem \ref{unifweakGDcomp}). Here, $\omega$ is expected to measure
the size of the cumulated jumps of $v_m$ between $s$ and $s'$,
and $\delta_m$ accounts for boundary effects which may occur in the small time intervals
containing $s$ and $s'$.

It is easy to see that \eqref{quasi-equic} can be replaced with
\begin{equation}\label{cond-ascoli}
d_E(v_m(s),v_m(s')) \to 0\,, \mbox{ as $m\to\infty$ and $d_K(s,s')\to 0$}
\end{equation}
(under this condition, the proof can be carried out by selecting
$M\in\N$ and $\rho>0$ such that $d_E(v_m(s),v_m(s'))\le \varepsilon$
whenever $m\ge M$ and $d_K(s,s')\le \rho$).
It turns out that \eqref{cond-ascoli} is actually a necessary and sufficient
condition for the theorem's conclusions to hold true.
\end{remark}

The following lemma states an equivalent condition for the uniform convergence
of functions, which proves extremely useful to establish uniform-in-time
convergence of numerical schemes for parabolic equations when no smoothness
is assumed on the data.

\begin{lemma} \label{equiv-unifconv}
Let $(K,d_K)$ be a compact metric space, $(E,d_E)$ be a metric space
and $(\mathcal F(K,E),d_{\mathcal F})$ as in Definition \ref{def-mathcalF}.
Let $(v_m)_{m\in\N}$ be a sequence in $\mathcal F(K,E)$ and $v:K\mapsto E$ be
continuous.

Then $v_m\to v$ for $d_{\mathcal F}$ if and only if, for any $s\in K$ and
any sequence $(s_m)_{m\in\N}\subset K$ converging to $s$ for $d_K$, we have
$v_m(s_m)\to v(s)$ for $d_E$.
\end{lemma}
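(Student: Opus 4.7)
The plan is to prove the two implications separately. The forward direction is essentially a triangle inequality argument using the continuity of the limit; the reverse direction requires a contradiction argument that exploits the compactness of $K$.

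First I would handle the easy direction: assume $v_m \to v$ in $d_{\mathcal F}$ and let $(s_m)_{m\in\N}$ converge to $s$ in $K$. The triangle inequality gives
\[
d_E(v_m(s_m),v(s)) \le d_E(v_m(s_m),v(s_m)) + d_E(v(s_m),v(s)) \le d_{\mathcal F}(v_m,v) + d_E(v(s_m),v(s)).
\]
The first term goes to $0$ by uniform convergence and the second by continuity of $v$, so $v_m(s_m) \to v(s)$.

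The reverse direction is the main point. Assume the sequential condition holds but, for contradiction, that $v_m \not\to v$ in $d_{\mathcal F}$. Then there exist $\varepsilon_0>0$, a subsequence $(v_{m_k})_{k\in\N}$, and points $s_{m_k} \in K$ such that $d_E(v_{m_k}(s_{m_k}),v(s_{m_k})) \ge \varepsilon_0$ for every $k$. By compactness of $K$, extract a further subsequence (still indexed by $k$ for convenience) such that $s_{m_k} \to s_\infty$ in $K$ for some $s_\infty \in K$.

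The key step is to extend the subsequence $(s_{m_k})_{k\in\N}$ into a full sequence $(t_m)_{m\in\N}$ that still converges to $s_\infty$: set $t_{m_k} = s_{m_k}$ for all $k$ and $t_m = s_\infty$ for all indices $m$ not of the form $m_k$. Then $t_m \to s_\infty$ in $K$, so by hypothesis $v_m(t_m) \to v(s_\infty)$ in $E$; in particular, $v_{m_k}(s_{m_k}) \to v(s_\infty)$. Combined with the continuity of $v$, which yields $v(s_{m_k}) \to v(s_\infty)$, we deduce
\[
d_E(v_{m_k}(s_{m_k}),v(s_{m_k})) \le d_E(v_{m_k}(s_{m_k}),v(s_\infty)) + d_E(v(s_\infty),v(s_{m_k})) \to 0,
\]
which contradicts the lower bound $\varepsilon_0$. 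The hardest part is simply recognising that one must promote a troublesome subsequence into a full sequence in order to apply the hypothesis, but once this trick is in place the argument is routine.
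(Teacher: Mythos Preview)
Your proof is correct and follows essentially the same route as the paper's: the forward direction via the same triangle inequality, and the reverse direction by contradiction, extracting a bad subsequence, using compactness of $K$ to get a convergent sub-subsequence of points, and then padding it out to a full sequence (with the limit point at the missing indices) so that the hypothesis applies. The only cosmetic difference is that the paper first takes $\sup_{s\in K} d_E(v_{m_k}(s),v(s))\ge \varepsilon$ and then picks points realising $\varepsilon/2$, whereas you absorb this into a single step with $\varepsilon_0$.
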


\begin{proof} If $v_m\to v$ for $d_{\mathcal F}$ then for any sequence $(s_m)_{m\in\N}$
converging to $s$
\[
d_E(v_m(s_m),v(s))\le d_E(v_m(s_m),v(s_m))+d_E(v(s_m),v(s))
\le d_{\mathcal F}(v_m,v)+d_E(v(s_m),v(s)).
\]
The right-hand side tends to $0$ by definition of $v_m\to v$ for
$d_{\mathcal F}$ and by continuity of $v$, which shows that
$v_m(s_m)\to v(s)$ for $d_E$.

Let us now prove the converse by contradiction. 
If $(v_m)_{m\in\N}$ does not converge to $v$ for $d_{\mathcal F}$ then there
exists $\varepsilon>0$ and a subsequence $(v_{m_k})_{k\in\N}$,
such that, for any $k\in\N$,
$\sup_{s\in K} d_E(v_{m_k}(s),v(s))\ge \varepsilon$. We can then find
a sequence $(r_k)_{k\in\N}\subset K$ such that, for any $k\in\N$,
\begin{equation}\label{contr}
d_E(v_{m_k}(r_k),v(r_k))\ge \varepsilon/2.
\end{equation}
$K$ being compact, up to another subsequence denoted the same way, we can assume that
$r_k$ converges as $k\to \infty$ to some $s$ in $K$. It is then trivial to construct a sequence
$(s_m)_{m\in\N}$ converging to $s$ and such that $s_{m_k}=r_k$ (just
take $s_m=s$ when $m$ is not an $m_k$).
We then have $v_m(s_m)\to v(s)$ in $E$ and, by continuity of $v$,
$v(s_m)\to v(s)$ in $E$. This shows that $d_E(v_m(s_m),v(s_m))\to 0$, which
contradicts \eqref{contr} and concludes the proof.
\end{proof}

The next result is classical. Its short proof is recalled for
the reader's convenience.

\begin{proposition}\label{propweakunifconv}
Let $E$ be a closed bounded ball in $L^2(\O)$ and let $(\varphi_l)_{l\in\N}$ be a dense
sequence in $L^2(\O)$. Then, on $E$, the weak topology
of $L^2(\O)$ is the topology given by the metric
\begin{equation}\label{def-distweak}
d_E(v,w)=\sum_{l\in\N} \frac{\min(1,|\langle v-w,\varphi_l\rangle_{L^2(\O)}|)}{2^l}.
\end{equation}
Moreover, a sequence of functions $u_m:[0,T]\to E$ converges uniformly-in-time
to $u:[0,T]\to E$ for the weak topology of $L^2(\O)$ (see Definition \ref{defweakunifconv})
if and only if, as $m\to\infty$, $d_E(u_m,u):[0,T]\to [0,\infty)$ converges uniformly to $0$.
\end{proposition}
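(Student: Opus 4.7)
The plan is twofold: first to verify that $d_E$ defines a metric on $E$ whose topology coincides with the weak topology inherited from $L^2(\O)$, and then to upgrade this to the uniform-in-time statement.

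For the metric claim, positivity, symmetry and the triangle inequality for $d_E$ are immediate from the corresponding properties of $|\langle \cdot,\varphi_l\rangle_{L^2}|$ and from the subadditivity of $s\mapsto \min(1,s)$. The separation property $d_E(v,w)=0\Rightarrow v=w$ comes from the density of $(\varphi_l)_{l\in\N}$ in $L^2(\O)$: vanishing of $\langle v-w,\varphi_l\rangle$ for every $l$ forces $v-w=0$. To identify the two topologies on $E$, I would show that they have the same convergent sequences; since both are metrizable (the weak topology on a closed bounded ball of the separable Hilbert space $L^2(\O)$ is metrizable, and even compact by Banach--Alaoglu), this suffices to conclude their equality. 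The implication ``weak convergence $\Rightarrow d_E$-convergence'' is an application of dominated convergence for the series defining $d_E$: each summand tends to $0$ and is dominated by the summable $2^{-l}$. For the reverse, $d_E(v_n,v)\to 0$ yields $\langle v_n-v,\varphi_l\rangle\to 0$ for every $l$, and then for a generic $\varphi\in L^2(\O)$ one writes
\[
|\langle v_n-v,\varphi\rangle|\le |\langle v_n-v,\varphi-\varphi_l\rangle|+|\langle v_n-v,\varphi_l\rangle|\le 2R\,\|\varphi-\varphi_l\|_{L^2}+|\langle v_n-v,\varphi_l\rangle|,
\]
where $R$ is the radius of $E$; picking $\varphi_l$ close to $\varphi$ first and then letting $n\to\infty$ delivers weak convergence.

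For the uniform-in-time equivalence, essentially the same two arguments work but performed uniformly in $t\in[0,T]$. In the direction ``uniform weak $\Rightarrow$ uniform $d_E$'', given $\varepsilon>0$ I would truncate the series defining $d_E$ at some index $L$ with $\sum_{l>L}2^{-l}<\varepsilon/2$, and then use the hypothesis that, for each of the finitely many $l\le L$, $\sup_t |\langle u_m(t)-u(t),\varphi_l\rangle|\to 0$ as $m\to\infty$ to bound the head of the series. In the reverse direction, the trivial inequality $\min(1,|\langle u_m(t)-u(t),\varphi_l\rangle|)\le 2^l d_E(u_m(t),u(t))$ propagates uniform-in-$t$ smallness of $d_E(u_m,u)$ to uniform-in-$t$ smallness of the pairings against each fixed $\varphi_l$, and the density-plus-approximation argument sketched above lifts this to arbitrary $\varphi\in L^2(\O)$.

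There is no genuine obstacle, only a point to keep track of: the error term produced in the approximation step must remain uniform in $t$. This is precisely the case because $u_m(t),u(t)\in E$ share the common $L^2$-bound $R$, so that $|\langle u_m(t)-u(t),\varphi-\varphi_l\rangle|\le 2R\|\varphi-\varphi_l\|_{L^2}$ does not depend on $t$. All remaining ingredients are routine interplay between the density of $(\varphi_l)_{l\in\N}$, the summability of $\sum 2^{-l}$, and the uniform $L^2$-bound provided by $E$.
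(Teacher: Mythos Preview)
Your proposal is correct. The paper takes a slightly different but equivalent route: instead of arguing via convergent sequences, it works directly with neighborhood bases, showing mutual inclusion between the $d_E$-balls around $0$ and the weak neighborhoods $E_{\varphi_l,\varepsilon}=\{v\in E:|\langle v,\varphi_l\rangle|<\varepsilon\}$. Your sequential approach is perfectly valid since you correctly note that both topologies are metrizable (invoking the standard fact for the weak topology on bounded sets of a separable Hilbert space), so coincidence of convergent sequences yields coincidence of topologies. The paper's approach is marginally more self-contained in that it does not need to cite metrizability of the weak topology as an external input---it exhibits $d_E$ as the metric directly---whereas your argument is perhaps more transparent for the uniform-in-time part, where the series-truncation step is essentially the same in both proofs.
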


\begin{proof}
The sets $E_{\varphi,\varepsilon}=
\{v\in E\,:\,|\langle v,\varphi\rangle_{L^2(\O)}|<\varepsilon\}$, for 
$\varphi\in L^2(\O)$ and $\varepsilon>0$, define a basis of neighborhoods
of $0$ for the weak $L^2(\O)$ topology on $E$, and a basis of neighborhoods of any other
point is obtained by translation. If $R$ is the radius
of the ball $E$ then for any $\varphi\in L^2(\O)$, $l\in\N$ and $v\in E$ we have
\[
|\langle v,\varphi\rangle_{L^2(\O)}|\le R||\varphi-\varphi_l||_{L^2(\O)}
+|\langle v,\varphi_l\rangle_{L^2(\O)}|.
\]
By density of $(\varphi_l)_{l\in\N}$ we can select $l\in\N$ such that
$||\varphi-\varphi_l||_{L^2(\O)}<\varepsilon/(2R)$ and we then see that
$E_{\varphi_l,\varepsilon/2}\subset E_{\varphi,\varepsilon}$. Hence, a basis
of neighborhoods of $0$ in $E$ for the weak $L^2(\O)$
is also given by $(E_{\varphi_l,\varepsilon})_{l\in\N,\,\varepsilon>0}$.

{}From the definition of $d_E$ we see that, for any $l\in\N$,
$\min(1,|\langle v,\varphi_l\rangle_{L^2(\O)}|)\le 2^l d_E(0,v)$. If $d_E(0,v)<2^{-l}$
this shows that $|\langle v,\varphi_l\rangle_{L^2(\O)}|\le 2^l d_E(0,v)$ and
therefore that 
\[
B_{d_E}(0,\min(2^{-l},\varepsilon 2^{-l}))\subset E_{\varphi_l,\varepsilon}.
\]
Hence, any neighborhood of $0$ in $E$ for the $L^2(\O)$ weak topology
is a neighborhood of $0$ for $d_E$. Conversely, for any $\varepsilon>0$,
selecting $N\in\N$ such that $\sum_{l\ge N+1}2^{-l}<\varepsilon/2$
gives, from the definition \eqref{def-distweak} of $d_E$,
\[
\bigcap_{l=1}^N E_{\varphi_l,\varepsilon/4}\subset B_{d_E}(0,\varepsilon).
\]
Hence, any ball for $d_E$ centered at $0$ is a neighborhood of $0$ for
the $L^2(\O)$ weak topology. Since $d_E$ and the $L^2(\O)$ weak neighborhoods
are invariant by translation, this concludes the proof that this weak topology
is identical to the topology generated by $d_E$.

The conclusion on weak uniform convergence of sequences of functions follows
from the preceding result, and more precisely by noticing that all previous
inclusions are, when applied to $u_m(t)-u(t)$, uniform with respect to $t\in [0,T]$.
\end{proof}

The following lemma has been initially established in \cite[Proposition 9.3]{eym03}.

\begin{lemma} \label{estttt}~

Let $(t^{(n)})_{n\in\Z}$ be a stricly increasing sequence of real values
such that $\dt^{(n+\half)} := t^{(n+1)} - t^{(n)}$ is uniformly bounded by $\dt>0$,
$\dsp\lim_{n\to-\infty} t^{(n)} = -\infty$
and $\dsp\lim_{n\to\infty} t^{(n)} = \infty$. For all $t\in\R$, we denote
by $n(t)$ the element $n\in\Z$ such that $t\in (t^{(n)},t^{(n+1)}]$.
Let $(a^{(n)})_{n\in\Z}$ be a family of non negative real numbers with a finite number of non
zero values.
Then
\begin{equation}
\int_{\R} \sum_{n=n(t)+1}^{n(t+\tau)} (\dt^{(n+\half)} a^{(n+1)}) \d t = \tau
\sum_{n\in\Z} (\dt^{(n+\half)} a^{(n+1)}), \quad \forall \tau>0,
\label{esttt1}
\end{equation}
and
\begin{equation}
\int_{\R} \left(\sum_{n=n(t)+1}^{n(t+\tau)}\dt^{(n+\half)}\right)  a^{n(t+s)+1} \d t \le
(\tau + \dt) \sum_{n\in\Z} \dt^{(n+\half)} a^{(n+1)}, \quad \forall \tau>0,
\ \forall s\in\R.
\label{esttt2}
\end{equation}
\end{lemma}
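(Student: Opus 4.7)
The plan is to prove both statements by interchanging summation and integration (which is legitimate since $(a^{(n)})_n$ has finite support and everything in sight is non-negative), after carefully describing, for each index $n$, the set of $t$'s for which $n$ contributes to the inner sum.

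For the equality \eqref{esttt1}, I would first establish the two equivalences ``$n > n(t) \Leftrightarrow t \le t^{(n)}$'' and ``$n \le n(t+\tau) \Leftrightarrow t > t^{(n)} - \tau$'', both of which are immediate from the definition of $n(\cdot)$ as the unique integer with $t\in (t^{(n(t))},t^{(n(t)+1)}]$. Together they show that, for fixed $n\in\Z$, the set $\{t\in\R : n(t)+1 \le n \le n(t+\tau)\}$ is exactly the half-open interval $(t^{(n)}-\tau, t^{(n)}]$, of Lebesgue measure $\tau$. Fubini then yields
\[
\int_\R \sum_{n=n(t)+1}^{n(t+\tau)} \dt^{(n+\half)} a^{(n+1)}\,\d t
= \sum_{n\in\Z} \dt^{(n+\half)} a^{(n+1)} \int_\R \mathbf{1}_{(t^{(n)}-\tau, t^{(n)}]}(t)\,\d t
= \tau\sum_{n\in\Z} \dt^{(n+\half)} a^{(n+1)},
\]
which is the claim.

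For the inequality \eqref{esttt2}, I would first telescope the inner sum:
\[
\sum_{n=n(t)+1}^{n(t+\tau)} \dt^{(n+\half)} = t^{(n(t+\tau)+1)} - t^{(n(t)+1)}.
\]
Using $t^{(n(t)+1)} \ge t$ and $t^{(n(t+\tau)+1)} \le t^{(n(t+\tau))} + \dt < t + \tau + \dt$ (the last step exploiting both $t^{(n(t+\tau))} < t+\tau$ and the uniform bound $\dt^{(n+\half)} \le \dt$), this telescoped quantity is at most $\tau + \dt$, \emph{uniformly in $t$}. Pulling this uniform bound out of the integral reduces the problem to computing $\int_\R a^{n(t+s)+1}\,\d t$. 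By the translation invariance of Lebesgue measure and the fact that $\{u\in\R : n(u)=n\} = (t^{(n)}, t^{(n+1)}]$ has measure $\dt^{(n+\half)}$, this integral equals $\sum_{n\in\Z}\dt^{(n+\half)} a^{(n+1)}$, giving the desired bound.

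The only point needing care — though not a genuine obstacle — is the bookkeeping of strict versus non-strict inequalities coming from the half-open convention $(t^{(n(t))}, t^{(n(t)+1)}]$ in the definition of $n(t)$; once the equivalence ``$n > n(t) \Leftrightarrow t \le t^{(n)}$'' is set up correctly, the rest is routine manipulation of a telescoping sum.
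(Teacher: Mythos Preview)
Your proposal is correct and follows essentially the same route as the paper: for \eqref{esttt1} the paper introduces an indicator $\chi(t,n,\tau)=\mathbf{1}_{t^{(n)}\in[t,t+\tau)}$, which is precisely your characterisation $t\in(t^{(n)}-\tau,t^{(n)}]$, and then applies Fubini; for \eqref{esttt2} the paper likewise bounds the inner sum uniformly by $\tau+\dt$ (your telescoping argument makes this explicit) and then rewrites $\int_\R a^{n(t+s)+1}\d t$ via the same decomposition into the intervals $(t^{(m)},t^{(m+1)}]$.
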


\begin{proof}

Let us define $\chi$ by $\chi(t,n,\tau) = 1$ if $t^{(n)}\in [t,t+\tau)$, 
otherwise $\chi(t,n,\tau) = 0$. We have
\begin{align*}
\int_{\R} \sum_{n=n(t)+1}^{n(t+\tau)}(\dt^{(n+\half)}  a^{(n+1)}) \d t  &=
\int_{\R} \sum_{n\in\Z} (\dt^{(n+\half)} a^{(n+1)} \chi(t,n,\tau))\d t\\ &=
\sum_{n\in\Z} \left(\dt^{(n+\half)} a^{(n+1)} \int_{\R} \chi(t,n,\tau)\d t\right).
\end{align*}
Since $\int_{\R} \chi(t,n,\tau)\d t = \int_{t^{(n)}-\tau}^{t^{(n)}} \d t = \tau$,
Relation \eqref{esttt1} is proved.

We now turn to the proof of \eqref{esttt2}. We define $\widetilde\chi$
by $\widetilde\chi(n,t) = 1$ if $n(t) = n$, otherwise
$\widetilde\chi(n,t) = 0$. We have
\[
\int_{\R} \left(\sum_{n=n(t)+1}^{n(t+\tau)}\dt^{(n+\half)}\right)  a^{(n(t+s)+1)} \d t  =
\int_{\R} \left(\sum_{n=n(t)+1}^{n(t+\tau)}\dt^{(n+\half)}\right)
\sum_{m\in\Z}a^{(m+1)}\widetilde\chi(m,t+s) \d t,
\]
which yields
\begin{equation}
\int_{\R} \left(\sum_{n=n(t)+1}^{n(t+\tau)}\dt^{(n+\half)}\right)  a^{(n(t+s)+1)}  \d t =
\sum_{m\in\Z} a^{(m+1)} \int_{t^{(m)}-s}^{t^{(m+1)}-s} \left(\sum_{n=n(t)+1}^{n(t+\tau)}\dt^{(n+\half)}\right) \d t.
\label{est5}\end{equation}
Since
\[
\sum_{n=n(t)+1}^{n(t+\tau)}\dt^{(n+\half)} = \sum_{n\in\Z,\ t\le t^{(n)} < t+\tau} (t^{(n+1)} - t^{(n)}) \le
\tau +  \dt,
\]
we deduce from \eqref{est5} that
\begin{align*}
\dsp\int_{\R} \left(\sum_{n=n(t)+1}^{n(t+\tau)}\dt^{(n+\half)}\right)  a^{(n(t+s)+1)} \d t & \le
(\tau +  \dt)
\sum_{m\in\Z} a^{(m+1)} \int_{t^{(m)}-s}^{t^{(m+1)}-s} \d t \\ &=
(\tau +  \dt)\sum_{m\in\Z} a^{(m+1)} \dt^{(m+\half)},
\end{align*}
which is exactly \eqref{esttt2}.

\end{proof}

\textbf{Acknowledgements}: The authors would like to thank Cl\'ement Canc\`es for fruitful discussions on discrete compensated compactness theorems.

\bibliographystyle{abbrv}
\bibliography{unifconv-gs-paradeg}

\begin{thebibliography}{10}

\bibitem{aav-96-dis}
I.~Aavatsmark, T.~Barkve, O.~Boe, and T.~Mannseth.
\newblock Discretization on non-orthogonal, quadrilateral grids for
  inhomogeneous, anisotropic media.
\newblock {\em J. Comput. Phys.}, 127(1):2--14, 1996.

\bibitem{akr2009opt}
G.~Akrivis, C.~Makridakis, and R.~H. Nochetto.
\newblock Optimal order a posteriori error estimates for a class of
  {R}unge-{K}utta and {G}alerkin methods.
\newblock {\em Numer. Math.}, 114(1):133--160, 2009.

\bibitem{akr2011gal}
G.~Akrivis, C.~Makridakis, and R.~H. Nochetto.
\newblock Galerkin and {R}unge-{K}utta methods: unified formulation, a
  posteriori error estimates and nodal superconvergence.
\newblock {\em Numer. Math.}, 118(3):429--456, 2011.

\bibitem{amann}
H.~Amann.
\newblock Compact embeddings of vector-valued {S}obolev and {B}esov spaces.
\newblock {\em Glas. Mat. Ser. III}, 35(55)(1):161--177, 2000.
\newblock Dedicated to the memory of Branko Najman.

\bibitem{and-07-dis}
B.~Andreianov, F.~Boyer, and F.~Hubert.
\newblock Discrete duality finite volume schemes for {L}eray-{L}ions-type
  elliptic problems on general 2{D} meshes.
\newblock {\em Numer. Methods Partial Differential Equations}, 23(1):145--195,
  2007.

\bibitem{ACM}
B.~Andreianov, C.~Canc\`es, and A.~Moussa.
\newblock A nonlinear time compactness result and applications to
  discretization of degenerate parabolic-elliptic {PDE}s.
\newblock HAL: hal-01142499. Submitted, 2015.

\bibitem{ABM06}
H.~Attouch, G.~Buttazzo, and G.~Michaille.
\newblock {\em Variational analysis in {S}obolev and {BV} spaces}, volume~6 of
  {\em MPS/SIAM Series on Optimization}.
\newblock Society for Industrial and Applied Mathematics (SIAM), Philadelphia,
  PA; Mathematical Programming Society (MPS), Philadelphia, PA, 2006.

\bibitem{pelletier}
M.~Bertsch, P.~De~Mottoni, and L.~Peletier.
\newblock The {S}tefan problem with heating: appearance and disappearance of a
  mushy region.
\newblock {\em Trans. Amer. Math. Soc}, 293:677--691, 1986.

\bibitem{brezzi-fortin}
F.~Brezzi and M.~Fortin.
\newblock {\em Mixed and hybrid finite element methods}, volume~15 of {\em
  Springer Series in Computational Mathematics}.
\newblock Springer-Verlag, New York, 1991.

\bibitem{bre-05-fam}
F.~Brezzi, K.~Lipnikov, and V.~Simoncini.
\newblock A family of mimetic finite difference methods on polygonal and
  polyhedral meshes.
\newblock {\em Math. Models Methods Appl. Sci.}, 15(10):1533--1551, 2005.

\bibitem{jungel2}
X.~Chen, A.~J{\"u}ngel, and J.-G. Liu.
\newblock A note on {A}ubin-{L}ions-{D}ubinski\u\i\ lemmas.
\newblock {\em Acta Appl. Math.}, 133:33--43, 2014.

\bibitem{cia-91-fin}
P.~Ciarlet.
\newblock The finite element method.
\newblock In P.~G. Ciarlet and J.-L. Lions, editors, {\em Part I}, Handbook of
  Numerical Analysis, III. North-Holland, Amsterdam, 1991.

\bibitem{cou-10-dis}
Y.~Coudi{\`e}re and F.~Hubert.
\newblock A 3d discrete duality finite volume method for nonlinear elliptic
  equations.
\newblock {\em SIAM Journal on Scientific Computing}, 33(4):1739--1764, 2011.

\bibitem{crouzeix-raviart-73}
M.~Crouzeix and P.-A. Raviart.
\newblock Conforming and nonconforming finite element methods for solving the
  stationary {S}tokes equations. {I}.
\newblock {\em Rev. Fran\c caise Automat. Informat. Recherche Op\'erationnelle
  S\'er. Rouge}, 7(R-3):33--75, 1973.

\bibitem{deimling}
K.~Deimling.
\newblock {\em Nonlinear functional analysis}.
\newblock Springer-Verlag, Berlin, 1985.

\bibitem{dia94}
J.~Diaz and F.~de~Thelin.
\newblock On a nonlinear parabolic problem arising in some models related to
  turbulent flows.
\newblock {\em SIAM J. Math. Anal.}, 25(4):1085--1111, 1994.

\bibitem{jungel1}
M.~Dreher and A.~J{\"u}ngel.
\newblock Compact families of piecewise constant functions in {$L^p(0,T;B)$}.
\newblock {\em Nonlinear Anal.}, 75(6):3072--3077, 2012.

\bibitem{poly}
J.~Droniou.
\newblock Int\'egration et espaces de sobolev \`a valeurs vectorielles.
\newblock Polyco\-pi\'es de l'Ecole Doctorale de Math\'ematiques-Informatique
  de Marseille, available at \texttt{http://www-gm3.univ-mrs.fr/polys}, 2001.

\bibitem{dro-06-ll}
J.~Droniou.
\newblock Finite volume schemes for fully non-linear elliptic equations in
  divergence form.
\newblock {\em ESAIM: Mathematical Modelling and Numerical Analysis},
  40(6):1069, 2006.

\bibitem{dro-06-mix}
J.~Droniou and R.~Eymard.
\newblock A mixed finite volume scheme for anisotropic diffusion problems on
  any grid.
\newblock {\em Numer. Math.}, 105(1):35--71, 2006.

\bibitem{koala}
J.~Droniou, R.~Eymard, T.~Gallou\"et, C.~Guichard, and R.~Herbin.
\newblock Gradient schemes for elliptic and parabolic problems.
\newblock 2015.
\newblock In preparation.

\bibitem{dro-10-uni}
J.~Droniou, R.~Eymard, T.~Gallou{\"e}t, and R.~Herbin.
\newblock A unified approach to mimetic finite difference, hybrid finite volume
  and mixed finite volume methods.
\newblock {\em Math. Models Methods Appl. Sci.}, 20(2):265--295, 2010.

\bibitem{dro-12-gra}
J.~Droniou, R.~Eymard, T.~Gallou{\"e}t, and R.~Herbin.
\newblock Gradient schemes: a generic framework for the discretisation of
  linear, nonlinear and nonlocal elliptic and parabolic equations.
\newblock {\em Math. Models Methods Appl. Sci. (M3AS)}, 23(13):2395--2432,
  2013.

\bibitem{DE-fvca7}
J.~Droniou, R.~Eymard, and C.~Guichard.
\newblock Uniform-in-time convergence of numerical schemes for {R}ichards' and
  {S}tefan's models.
\newblock In {\em Finite Volumes for Complex Applications VII}. Springer, 2014.

\bibitem{edw-98-mpfa}
M.~G. Edwards and C.~F. Rogers.
\newblock Finite volume discretization with imposed flux continuity for the
  general tensor pressure equation.
\newblock {\em Comput. Geosci.}, 2(4):259--290, 1998.

\bibitem{ekeland-temam}
I.~Ekeland and R.~T{\'e}mam.
\newblock {\em Convex analysis and variational problems}, volume~28 of {\em
  Classics in Applied Mathematics}.
\newblock Society for Industrial and Applied Mathematics (SIAM), Philadelphia,
  PA, english edition, 1999.
\newblock Translated from the French.

\bibitem{ern2004theory}
A.~Ern and J.-L. Guermond.
\newblock {\em Theory and practice of finite elements}, volume 159.
\newblock Springer, Berlin, 2004.

\bibitem{eym-12-stef}
R.~Eymard, P.~Feron, T.~Gallou\"et, R.~Herbin, and C.~Guichard.
\newblock Gradient schemes for the {S}tefan problem.
\newblock {\em International Journal On Finite Volumes}, 10s, 2013.

\bibitem{sushi}
R.~Eymard, T.~Gallou{\"e}t, and R.~Herbin.
\newblock Discretization of heterogeneous and anisotropic diffusion problems on
  general nonconforming meshes {SUSHI}: a scheme using stabilization and hybrid
  interfaces.
\newblock {\em IMA J. Numer. Anal.}, 30(4):1009--1043, 2010.

\bibitem{EGHNS}
R.~Eymard, T.~Gallou{\"e}t, D.~Hilhorst, and Y.~Na{\"{\i}}t~Slimane.
\newblock Finite volumes and nonlinear diffusion equations.
\newblock {\em RAIRO Mod\'el. Math. Anal. Num\'er.}, 32(6):747--761, 1998.

\bibitem{eym-12-sma}
R.~Eymard, C.~Guichard, and R.~Herbin.
\newblock Small-stencil 3d schemes for diffusive flows in porous media.
\newblock {\em M2AN}, 46:265--290, 2012.

\bibitem{zamm2013}
R.~Eymard, C.~Guichard, R.~Herbin, and R.~Masson.
\newblock Gradient schemes for two-phase flow in heterogeneous porous media and
  {R}ichards equation.
\newblock {\em ZAMM Z. Angew. Math. Mech.}, 94(7-8):560--585, 2014.

\bibitem{MR1750075}
R.~Eymard, M.~Gutnic, and D.~Hilhorst.
\newblock The finite volume method for {R}ichards equation.
\newblock {\em Comput. Geosci.}, 3(3-4):259--294, 1999.

\bibitem{eym-11-gra}
R.~Eymard and R.~Herbin.
\newblock Gradient scheme approximations for diffusion problems.
\newblock {\em Finite Volumes for Complex Applications VI Problems \&
  Perspectives}, pages 439--447, 2011.

\bibitem{eym03}
R.~Eymard, R.~Herbin, and A.~Michel.
\newblock Mathematical study of a petroleum-engineering scheme.
\newblock {\em M2AN Math. Model. Numer. Anal.}, 37(6):937--972, 2003.

\bibitem{gal-12-com}
T.~Gallou{\"e}t and J.-C. Latch{\'e}.
\newblock Compactness of discrete approximate solutions to parabolic
  {PDE}s---application to a turbulence model.
\newblock {\em Commun. Pure Appl. Anal.}, 11(6):2371--2391, 2012.

\bibitem{glo03}
R.~Glowinski and J.~Rappaz.
\newblock Approximation of a nonlinear elliptic problem arising in a
  non-newtonian fluid flow model in glaciology.
\newblock {\em M2AN Math. Model. Numer. Anal.}, 37(1):175--186, 2003.

\bibitem{gon2002bac}
C.~Gonz{\'a}lez, A.~Ostermann, C.~Palencia, and M.~Thalhammer.
\newblock Backward {E}uler discretization of fully nonlinear parabolic
  problems.
\newblock {\em Math. Comp.}, 71(237):125--145, 2002.

\bibitem{gwi2014ful}
J.~Gwinner and M.~Thalhammer.
\newblock Full discretisations for nonlinear evolutionary inequalities based on
  stiffly accurate {R}unge-{K}utta and {$hp$}-finite element methods.
\newblock {\em Found. Comput. Math.}, 14(5):913--949, 2014.

\bibitem{her-03-app}
F.~Hermeline.
\newblock Approximation of diffusion operators with discontinuous tensor
  coefficients on distorted meshes.
\newblock {\em Computer methods in applied mechanics and engineering},
  192(16):1939--1959, 2003.

\bibitem{KAZ98}
A.~V. Kazhikhov.
\newblock {\em Recent developments in the global theory of two-dimensional
  compressible {N}avier-{S}tokes equations}, volume~25 of {\em Seminar on
  Mathematical Sciences}.
\newblock Keio University, Department of Mathematics, Yokohama, 1998.

\bibitem{lub1993runge}
C.~Lubich and A.~Ostermann.
\newblock Runge-{K}utta methods for parabolic equations and convolution
  quadrature.
\newblock {\em Math. Comp.}, 60(201):105--131, 1993.

\bibitem{lub1995lin}
C.~Lubich and A.~Ostermann.
\newblock Linearly implicit time discretization of non-linear parabolic
  equations.
\newblock {\em IMA J. Numer. Anal.}, 15(4):555--583, 1995.

\bibitem{lub1995run}
C.~Lubich and A.~Ostermann.
\newblock Runge-{K}utta approximation of quasi-linear parabolic equations.
\newblock {\em Math. Comp.}, 64(210):601--627, 1995.

\bibitem{lub1996run}
C.~Lubich and A.~Ostermann.
\newblock Runge-{K}utta time discretization of reaction-diffusion and
  {N}avier-{S}tokes equations: nonsmooth-data error estimates and applications
  to long-time behaviour.
\newblock {\em Appl. Numer. Math.}, 22(1-3):279--292, 1996.
\newblock Special issue celebrating the centenary of Runge-Kutta methods.

\bibitem{MR1916296}
E.~Maitre.
\newblock Numerical analysis of nonlinear elliptic-parabolic equations.
\newblock {\em M2AN Math. Model. Numer. Anal.}, 36(1):143--153, 2002.

\bibitem{min-63-mon}
G.~Minty.
\newblock On a “monotonicity” method for the solution of non- linear
  equations in {B}anach spaces.
\newblock {\em Proceedings of the National Academy of Sciences of the United
  States of America}, 50(6):1038, 1963.

\bibitem{MR954786}
R.~H. Nochetto and C.~Verdi.
\newblock Approximation of degenerate parabolic problems using numerical
  integration.
\newblock {\em SIAM J. Numer. Anal.}, 25(4):784--814, 1988.

\bibitem{ost2002con}
A.~Ostermann and M.~Thalhammer.
\newblock Convergence of {R}unge-{K}utta methods for nonlinear parabolic
  equations.
\newblock {\em Appl. Numer. Math.}, 42(1-3):367--380, 2002.
\newblock Ninth Seminar on Numerical Solution of Differential and
  Differential-Algebraic Equations (Halle, 2000).

\bibitem{ost2004sta}
A.~Ostermann, M.~Thalhammer, and G.~Kirlinger.
\newblock Stability of linear multistep methods and applications to nonlinear
  parabolic problems.
\newblock {\em Appl. Numer. Math.}, 48(3-4):389--407, 2004.
\newblock Workshop on Innovative Time Integrators for {PDE}s.

\bibitem{MR2230161}
I.~S. Pop.
\newblock Numerical schemes for degenerate parabolic problems.
\newblock In {\em Progress in industrial mathematics at {ECMI} 2004}, volume~8
  of {\em Math. Ind.}, pages 513--517. Springer, Berlin, 2006.

\bibitem{rul1996opt}
J.~Rulla and N.~J. Walkington.
\newblock Optimal rates of convergence for degenerate parabolic problems in two
  dimensions.
\newblock {\em SIAM J. Numer. Anal.}, 33(1):56--67, 1996.

\end{thebibliography}
\end{document}